% LaTeX: Article  : Homotopy theory of homotopy algebras
% Version 3, 7/2/16 : 
 
\documentclass{amsart}

% Package

\usepackage{times}
\usepackage{amsmath, amsthm, amssymb, amsfonts}
\usepackage{mathrsfs}
\usepackage{latexsym}
\usepackage[all]{xy}
\SelectTips {cm}{}
\usepackage{graphicx}
\usepackage{comment}
\usepackage{appendix}
\usepackage{amscd}
\usepackage{a4wide}
\usepackage{mathrsfs}
\usepackage{pdfsync}
\usepackage{pstricks}
\usepackage{pstricks-add}
\usepackage{stmaryrd}

% Counter

% Colors

\usepackage{color}
\definecolor{Chocolat}{rgb}{0.36, 0.2, 0.09}
\definecolor{BleuTresFonce}{rgb}{0.215, 0.215, 0.36}

% Hyperref

\usepackage[colorlinks,final,backref=page,hyperindex]{hyperref}
\hypersetup{citecolor=BleuTresFonce, linkcolor=Chocolat}

% Newtheorem

\theoremstyle{plain}

\newtheorem{theo}{Theorem}[section]
\newtheorem{prop}[theo]{Proposition}
\newtheorem{lemm}[theo]{Lemma}
\newtheorem{coro}[theo]{Corollary}
\newtheorem*{theointro}{Theorem}

\theoremstyle{definition}

\newtheorem{defi}{Definition}[section]

\theoremstyle{remark}

\newtheorem*{examples}{Examples}

\newtheorem*{remark}{Remark}

% MACROS COMMANDES

% DEBUT DES MACROS LOCALES

\long\def\forget#1\relax  
% "oublie" de compiler

\def\Ai{A_{\infty}}

% Categories

%\def\YY{Y\!\!\!\! Y}

\def\dd{\delta}
\def\DD{\Delta}

\def\aa{\alpha}
\def\bb{\beta}

\def\t{\otimes}

\def\gr{\mathrm{gr}}

\def\B{\mathrm{B}}
%V tensor whatever

%vecteur horizontal #1= variable #2= premier indice
%#3=dernier indice

%K[S whatever]

%billboard
\def\NN{{\mathbb{N}}}
\def\ZZ{{\mathbb{Z}}}

\def\CCC{{\mathcal{C}}}

\def\PP{{\mathcal{P}}}

\def\TTT{{\mathcal{T}}}

\def\Id{\mathrm{Id }}
\def\Hom{\mathrm{Hom}}
\def\End{\mathrm{End}}

\def\Tw{\mathrm{Tw}}

\def\epi{\twoheadrightarrow}
\def\mono{\rightarrowtail}

\def\Im{\mathop{\rm Im }}
\def\Ker{\mathop{\rm Ker }}

\def\Hom{\mathrm{\rm Hom }}

\def\End{\mathop{\rm End }}

\def\id{\mathrm{ id }}
\def\I{\mathrm{ I }}

\def\Ai{A_{\infty}}

\def\ac{^{\scriptstyle \textrm{!`}}}

\def\KK{\mathbb{K}}

\def\Sy{{\mathbb S}}

\def\arbreA{\vcenter{\xymatrix@R=3pt@C=3pt{
&& \\
&*{}\ar@{-}[ur] \ar@{-}[ul] \ar@{-}[d]     &\\
&&
}}}

% Newenvironment

\def\arbreA{\vcenter{\xymatrix@R=3pt@C=3pt{
&& \\
&*{}\ar@{-}[ur] \ar@{-}[ul] \ar@{-}[d]     &\\
&&
}}}

\def\arbreAgrand{\vcenter{\xymatrix@R=30pt@C=30pt{
&& \\
&*{}\ar@{-}[ur] \ar@{-}[ul] \ar@{-}[d]     &\\
&&
}}}

\def\arbreBA{\vcenter{\xymatrix@R=2pt@C=2pt{
&&&&\\
&&&*{}\ar@{-}[ul] & \\
&&*{}\ar@{-}[uurr] \ar@{-}[uull] \ar@{-}[d]     &&\\
&&&&
}}}

\def\arbreAB{\vcenter{\xymatrix@R=2pt@C=2pt{
&&&&\\
&*{}\ar@{-}[ur] &&& \\
&&*{}\ar@{-}[uurr] \ar@{-}[uull] \ar@{-}[d]     &&\\
&&&&
}}}

\def\arbreBB{\vcenter{\xymatrix@R=2pt@C=2pt{
&&*{}&&\\
&&&& \\
&&*{}\ar@{-}[uurr] \ar@{-}[uull] \ar@{-}[d] \ar@{-}[uu]     &&\\
&&&&
}}}

\def\arbreBBbis{\vcenter{\xymatrix@R=4pt@C=4pt@M=0pt{
&&*{}&&\\
&&&& \\
&&*{}\ar@{-}[uurr] \ar@{-}[uull] \ar@{-}[d] \ar@{-}[uu]     &&\\
&&&& }}}

\def\arbreABC{\vcenter{\xymatrix@R=1pt@C=1pt{
&&&&&&\\
&*{}\ar@{-}[ur] &&&&& \\
&&*{}\ar@{-}[uurr] &&&&\\
&&&*{}\ar@{-}[uuurrr] \ar@{-}[uuulll] \ar@{-}[d] &&&\\
&&&&&&
}}}

\def\arbreBAC{\vcenter{\xymatrix@R=1pt@C=1pt{
&&&&&&\\
&&&*{}\ar@{-}[ul] &&& \\
&&*{}\ar@{-}[uurr] &&&&\\
&&&*{}\ar@{-}[uuurrr] \ar@{-}[uuulll] \ar@{-}[d] &&&\\
&&&&&&
}}}

\def\arbreACA{\vcenter{\xymatrix@R=1pt@C=1pt{
&&&&&&\\
&*{}\ar@{-}[ur] &&&&*{}\ar@{-}[ul] & \\
&&&&&&\\
&&&*{}\ar@{-}[uuurrr] \ar@{-}[uuulll] \ar@{-}[d] &&&\\
&&&&&&
}}}

\def\arbreACB{\vcenter{\xymatrix@R=1pt@C=1pt{
&&&&&&\\
&*{}\ar@{-}[ur] &&&&& \\
&&&&*{}\ar@{-}[uull] &&\\
&&&*{}\ar@{-}[uuurrr] \ar@{-}[uuulll] \ar@{-}[d] &&&\\
&&&&&&
}}}

\def\arbreBCA{\vcenter{\xymatrix@R=1pt@C=1pt{
&&&&&&\\
&&&&&*{}\ar@{-}[ul] & \\
&&*{}\ar@{-}[uurr] &&&&\\
&&&*{}\ar@{-}[uuurrr] \ar@{-}[uuulll] \ar@{-}[d] &&&\\
&&&&&&
}}}

\def\arbreCAB{\vcenter{\xymatrix@R=1pt@C=1pt{
&&&&&&\\
&&&*{}\ar@{-}[ur] &&& \\
&&&&*{}\ar@{-}[uull] &&\\
&&&*{}\ar@{-}[uuurrr] \ar@{-}[uuulll] \ar@{-}[d] &&&\\
&&&&&&
}}}

\def\arbreCBA{\vcenter{\xymatrix@R=1pt@C=1pt{
&&&&&&\\
&&&&&*{}\ar@{-}[ul] & \\
&&&&*{}\ar@{-}[uull] &&\\
&&&*{}\ar@{-}[uuurrr] \ar@{-}[uuulll] \ar@{-}[d] &&&\\
&&&&&&
}}}

\def\arbreAAC{\vcenter{\xymatrix@R=1pt@C=1pt{
&&&&&&\\
&&&&&& \\
&&*{}\ar@{-}[uurr]\ar@{-}[uu]  &&&&\\
&&&*{}\ar@{-}[uuurrr] \ar@{-}[uuulll] \ar@{-}[d] &&&\\
&&&&&&
}}}

\def\arbreCAC{\vcenter{\xymatrix@R=1pt@C=1pt{
&&&&&&\\
&&&*{}\ar@{-}[ul]\ar@{-}[ur] &&& \\
&&&&&&\\
&&&*{}\ar@{-}[uuurrr] \ar@{-}[uuulll]\ar@{-}[uu]  \ar@{-}[d] &&&\\
&&&&&&
}}}

\def\arbreACC{\vcenter{\xymatrix@R=1pt@C=1pt{
&&&&&&\\
&*{}\ar@{-}[ur] &&&&& \\
&&&&&&\\
&&&*{}\ar@{-}[uuurrr] \ar@{-}[uuulll]\ar@{-}[uuu]  \ar@{-}[d] &&&\\
&&&&&&
}}}

\def\arbreCBB{\vcenter{\xymatrix@R=1pt@C=1pt{
&&&&&&\\
&&&&&& \\
&&&&*{}\ar@{-}[uull]\ar@{-}[uu]  &&\\
&&&*{}\ar@{-}[uuurrr] \ar@{-}[uuulll] \ar@{-}[d] &&&\\
&&&&&&
}}}

\def\arbreCCA{\vcenter{\xymatrix@R=1pt@C=1pt{
&&&&&&\\
&&&&&*{}\ar@{-}[ul] & \\
&&&&&&\\
&&&*{}\ar@{-}[uuurrr] \ar@{-}[uuulll]\ar@{-}[uuu]  \ar@{-}[d] &&&\\
&&&&&&
}}}

\def\arbreBBC{\vcenter{\xymatrix@R=1pt@C=1pt{
&&&&&&\\
&&&&&& \\
&&*{}\ar@{-}[uu] \ar@{-}[uurr] &&&&\\
&&&*{}\ar@{-}[uuurrr] \ar@{-}[uuulll] \ar@{-}[d] &&&\\
&&&&&&
}}}

\def\arbreCCC{\vcenter{\xymatrix@R=1pt@C=1pt{
&&&&&&\\
&&&&&& \\
&&&&&&\\
&&&*{}\ar@{-}[uuurrr]  \ar@{-}[uuulll]\ar@{-}[uuul] \ar@{-}[uuur]  \ar@{-}[d] &&&\\
&&&&&&
}}}

\def\arbreCCCbis{\vcenter{\xymatrix@R=4pt@C=4pt@M=0pt{
&&&&&&\\
&&&&&& \\
&&&&&&\\
&&&*{}\ar@{-}[uuurrr]  \ar@{-}[uuulll]\ar@{-}[uuul] \ar@{-}[uuur]  \ar@{-}[d] &&&\\
&&&&&& }}}

\def\arbreADAB{\vcenter{\xymatrix@R=5pt@C=5pt{
 &&&&&&&&\\
&*{}\ar@{-}[ur] &&&&*{}\ar@{-}[ur] && & \\
&&&&&&*{} \ar@{-}[uull] &&\\
&&&&&&&&\\
&&&&*{}\ar@{-}[uuuurrrr] \ar@{-}[uuuullll] \ar@{-}[d] &&&&\\
&&&&&&&&
}}}

\def\arbreBBDA{\vcenter{\xymatrix@R=1pt@C=1pt{
 &&&&&&\\
&&&&&*{}\ar@{-}[ul] \ar@{-}[ur]& \\
&&*{}\ar@{-}[uu] \ar@{-}[uurr] \ar@{-}[uull] &&&&\\
&&&&&&\\
&&&*{}\ar@{-}[uuurr] \ar@{-}[uul] \ar@{-}[d] &&&\\
&&&&&&
}}}

\def\arbreBBDAa{\vcenter{\xymatrix@R=1pt@C=1pt{
 \ar@{.}[rrrrrr]&&&&&&\\
&&&&& *{} \ar@{-}[ul] \ar@{-}[ur] & \\
&&*{}\ar@{-}[uu] \ar@{-}[uurr] \ar@{-}[uull] &&&&\\
&&&&&&\\
&&&*{}\ar@{-}[uuurr] \ar@{-}[uul] \ar@{-}[d] &&&\\
&&&&&&
}}}

\def\arbreBBDAb{\vcenter{\xymatrix@R=1pt@C=1pt{
 \ar@{.}[rrrr]&&&&&&\\
&&&&&*{}\ar@{-}[ul] \ar@{-}[ur]& \\
&&*{}\ar@{-}[uu] \ar@{-}[uurr]\ar@{-}[uull]   &\ar@{.}[rrr]&&&\\
&&&&&&\\
&&&*{}\ar@{-}[uuurr] \ar@{-}[uul] \ar@{-}[d] &&&\\
&&&&&&
}}}

\def\arbreBBDAc{\vcenter{\xymatrix@R=1pt@C=1pt{
 &&&& \ar@{.}[rr]&&\\
&&&&&*{}\ar@{-}[ul] \ar@{-}[ur]& \\
&&*{}\ar@{-}[uu] \ar@{-}[uurr]\ar@{-}[uull]  &&&&\\
 \ar@{.}[rrr]&&&&&&\\
&&&*{}\ar@{-}[uuurr] \ar@{-}[uul] \ar@{-}[d] &&&\\
&&&&&&
}}}

\def\arbreBBDAd{\vcenter{\xymatrix@R=1pt@C=1pt{
 &&&&&&\\
&&&&&*{}\ar@{-}[ul] \ar@{-}[ur] & \\
&&*{}\ar@{-}[uu] \ar@{-}[uurr]\ar@{-}[uull]  &&&&\\
 \ar@{.}[rrrrrr]&&&&&&\\
&&&*{}\ar@{-}[uuurr] \ar@{-}[uul] \ar@{-}[d] &&&\\
&&&&&&
}}}

\def\arbreBBDAe{\vcenter{\xymatrix@R=1pt@C=1pt{
 &&&&&&\\
&&&&&*{}\ar@{-}[ul] \ar@{-}[ur]& \\
&&*{}\ar@{-}[uu] \ar@{-}[uurr]\ar@{-}[uull]  &&&&\\
&&&&&&\\
&&&*{}\ar@{-}[uuurr] \ar@{-}[uul] \ar@{-}[d] &&&\\
 \ar@{.}[rrrrrr]&&&&&&
}}}

\def\elecun{\vcenter{\xymatrix@R=6pt@C=6pt{
&&*{}\ar@{-}[d] &&&*{}\ar@{-}[dddd] \\
&&*{}\ar@{-}[dl] \ar@{-}[dr] &&&\\
&*{}\ar@{-}[dddd] &&*{}\ar@{-}[d] &&\\
&&&\bb \ar@{-}[d]&&\\
&&&*{}\ar@{-}[dr] &&*{}\ar@{-}[dl] \\
&&&&*{}\ar@{-}[dddd] &\\
&*{}\ar@{-}[dr] \ar@{-}[dl] &&&&\\
*{}\ar@{-}[dddd] &&*{}\ar@{-}[d] &&&\\
&&\aa \ar@{-}[d]&&&\\
&&*{}\ar@{-}[dr] &&*{}\ar@{-}[dl] &\\
&&&*{}\ar@{-}[d] &&\\
&&&*{}&&
}}}

\def\elecdeux{\vcenter{\xymatrix@R=6pt@C=6pt{
&&*{}\ar@{-}[dd] &&&*{}\ar@{-}[dddddd] \\
&&&&&\\
&&*{}\ar@{-}[dl] \ar@{-}[dr] &&&\\
&*{}\ar@{-}[dr] \ar@{-}[dl] &&*{}\ar@{-}[dd] &&\\
*{}\ar@{-}[ddddddd] &&*{}\ar@{-}[d]&&&\\
&&\aa \ar@{-}[dd]&\bb \ar@{-}[d]&&\\
&&&*{}\ar@{-}[dr] &&*{}\ar@{-}[dl] \\
&&*{}\ar@{-}[dr] &&*{}\ar@{-}[dl] &\\
&&&*{}\ar@{-}[ddd] &\\
&&&&&\\
&& &&&\\
&&&*{}&&
}}}

\def\electrois{\vcenter{\xymatrix@R=6pt@C=6pt{
&&*{}\ar@{-}[dd] &&&&*{}\ar@{-}[dddddd] \\
&&&&&&\\
&&*{}\ar@{-}[dl] \ar@{-}[ddrr] &&&&\\
&*{}\ar@{-}[dl] &&*{}\ar@{-}[dl] & &&\\
*{}\ar@{-}[ddddddd] &&*{}\ar@{-}[d]&&*{}\ar@{-}[d]&\\
&&\aa \ar@{-}[d]&&\bb \ar@{-}[d]&&\\
&&*{}\ar@{-}[dr] &&*{}\ar@{-}[dl] &&*{}\ar@{-}[dl] \\
&&&*{}\ar@{-}[dr] &&*{}\ar@{-}[dl] &\\
&&&&*{}\ar@{-}[ddd] &\\
&&&&&\\
&& &&&\\
&&&*{}&&
}}}

\def\elecquatre{\vcenter{\xymatrix@R=6pt@C=6pt{
&*{}\ar@{-}[d] &&&*{}\ar@{-}[ddddd] &&*{}\ar@{-}[ddddddd] &&*{}\ar@{-}[ddddddd]  \\
&*{}\ar@{-}[dl]\ar@{-}[dr]  &&&  &&&&\\
*{}\ar@{-}[ddddd] &&*{}\ar@{-}[d] && &&&&\\
&&{\bullet}&& &= & & & &\\
&&{\bullet}\ar@{-}[d] && &&&&\\
&&*{}\ar@{-}[dr] &&*{}\ar@{-}[dl]  &&&&\\
&&&*{}\ar@{-}[d] & &&&&\\
&&&*{}& &&*{}&&*{}
}}}

\def\KzeroF{\xymatrix@R=4pt@C=4pt{
\\
\\
\\
\bullet\\
}}

\def\KunF{\xymatrix@R=4pt@C=4pt{
&&\\
&&\\
&&\\
*{}\ar@{->}[rrr]&&&*{}\\
}}

\def\KdeuxF{\xymatrix@R=4pt@C=4pt{
&& &&\\
&&*{}\ar[dll]\ar[ddrrr]  &&&&\\
 *{}\ar[dd] &&&&&\\
&&&&&*{}\ar[ddlll] \\
 *{}\ar[drr] &&&&& \\
&&*{}&&& \\
&& &&&
}}

\def\KtroisF{\xymatrix@R=0pt@C=0pt{
&*{}\ar[rrrrrrrrrr] *{}\ar@{<-}[rrrrddd] *{}\ar[ldd] &&&& &&& &&&*{}\ar@{.>}[llldddddd]  *{}\ar@{<-}[rrd] && \\
&&&& &&& &&& &&&*{}\ar@{<-}[llldd] *{}\ar[llldddddd]   \\
*{}\ar@{<-}[rrrrddd]  *{}\ar[rrdddd] &&&&& &&& &&& && \\
&&&&&*{}\ar[rrrrr] *{}\ar[ldd]  &&& &&*{}\ar[lldddd] & && \\
&&&&& &&& &&& && \\
&&&&*{}\ar[rdd] & &&& &&& & &\\
&&*{}\ar@{.>}[rrrrrr] *{}\ar@{<-}[rrrddd] &&& &&&*{}\ar@{<.}[rrd]  &&& && \\
&&&&&*{}\ar[rrr] *{}\ar[dd]  &&&*{}\ar[dd]  &&*{}\ar@{<-}[lldd] & & &\\
&&&&& &&& &&& && \\
&&&&&*{}\ar[rrr]  &&&*{} &&& && \\
}}

\def\KunCube{\xymatrix@R=4pt@C=4pt{
&&&&\\
&&&&\\
&&&&\\
*{}|\ar@{-}[rrrr]&&|&&*{}|\\
}}

\def\KdeuxCube{\xymatrix@R=6pt@C=6pt{
&&*{}\ar@{-}[ddll]\ar@{-}[dddrr]  &&\\
&*{}\ar@{-}[ddr]&  &&\\
*{}\ar@{-}[dd] & & & & \\
*{}\ar@{-}[rr] & &*{}\ar@{-}[ddr]\ar@{-}[uur] &\quad & *{}\ar@{-}[dddll]\\
*{}\ar@{-}[ddrr] & & & & \\
&*{}\ar@{-}[uur]&&& \\
&&*{}&&\\
}}

\def\KdeuxDiagdeuxA{\xymatrix@R=8pt@C=8pt{
&&\quad &             \quad            &*{}\ar@{-}[dddlll]\ar@{-}[ddddddrrrrrr]&& &&& & \\
&&&       *{}               &&& &&& & \\
&&*{}&                         &&&*{} &&& & \\
&*{}\ar@{-}[dddddd]&&*{}\ar@{-}[ddll]\ar@{-}[dr]\ar@{-}[uu] &&& &&& & \\
&&*{}\ar@{-}[dr]\ar@{-}[uu]&      &*{}\ar@{-}[dl]\ar@{-}[ddddrrrr]\ar@{-}[uurr]&& &&*{}& & \\
&*{}&&    *{}\ar@{-}[dd] &&& &&& & \\
&&*{}\ar@{-}[uu]\ar@{-}[dl]\ar@{-}[dr]&                        &&& &&& &*{} \\
&*{}&&       *{}\ar@{-}[dddrrr]&&& &&& & \\
&&*{}\ar@{-}[ur]\ar@{-}[dd]&      &&&&&*{}&  & \\
&*{}&&*{}\ar@{-}[uuuuurrrrr]\ar@{-}[uull]\ar@{-}[dd]&&& &&& & \\
&&*{}& &&&*{} &&& & \\
&&&*{} &&& &&& & \\
&&& &*{}\ar@{-}[uuulll]\ar@{-}[uuuuuurrrrrr]&& &&& &
}}

\def\KdeuxDiagdeuxB{\xymatrix@R=8pt@C=8pt{
&&\quad &             \quad            &*{}\ar@{-}[dddlll]\ar@{-}[ddddddrrrrrr]&& &&& & \\
&&&       *{}               &&& &&& & \\
&&*{}&                         &&&*{} &&& & \\
&*{}\ar@{-}[dddddd]&&*{}\ar@{-}[ddll]\ar@{-}[dr]\ar@{-}[uu] &&& &&& & \\
&&*{}\ar@{-}[dr]\ar@{-}[uu]&      &*{}\ar@{-}[dl]\ar@{-}[ddddrrrr]\ar@{-}[uurr]&& &&*{}& & \\
&*{}&&    *{}\ar@{-}[dd] &&& &&& & \\
&&*{}\ar@{-}[dd]\ar@{-}[ul]\ar@{-}[ur]&                        &&& &&& &*{} \\
&*{}&&       *{}\ar@{-}[dddrrr]&&& &&& & \\
&&*{}\ar@{-}[ur]\ar@{-}[dd]&      &&&&&*{}&  & \\
&*{}&&*{}\ar@{-}[uuuuurrrrr]\ar@{-}[uull]\ar@{-}[dd]&&& &&& & \\
&&*{}& &&&*{} &&& & \\
&&&*{} &&& &&& & \\
&&& &*{}\ar@{-}[uuulll]\ar@{-}[uuuuuurrrrrr]&& &&& &
}}

\def\cpbdeuxdeux{\vcenter{\xymatrix@R=2pt@C=2pt{
&&\\
*{}\ar@{-}[u]\ar@{-}[dr]&&*{}\ar@{-}[u]\ar@{-}[dl]\\
&*{}\ar@{-}[d]&\\
&*{}\ar@{-}[dl]\ar@{-}[dr]&\\
*{}\ar@{-}[d]&&*{}\ar@{-}[d]\\
&&
}}}

\def\cpbA{\vcenter{\xymatrix@R=2pt@C=2pt{
*{}\ar@{-}[ddddd]&&*{}\ar@{-}[ddddd]\\
&&\\
&&\\
&&\\
&&\\
*{}&&*{}
}}}

\def\cpbB{\vcenter{\xymatrix@R=2pt@C=2pt{
*{}\ar@{-}[dd]&&*{}\ar@{-}[dd]\\
&&\\
*{}\ar@{-}[drr]& &*{}\ar@{-}[dll]\\
*{}\ar@{-}[dd]& &*{}\ar@{-}[dd]\\
&&\\
*{}&&*{}
}}}

\def\cpbC{\vcenter{\xymatrix@R=2pt@C=2pt{
&&&&\\
&*{}\ar@{-}[dl]\ar@{-}[dr]\ar@{-}[u]&&&\\
*{}\ar@{-}[ddd]&&*{}\ar@{-}[d]&&\\
&&*{}\ar@{-}[dr]&&*{}\ar@{-}[dl]*{}\ar@{-}[uuu]\\
&&&*{}\ar@{-}[d]&\\
&&&&
}}}

\def\cpbD{\vcenter{\xymatrix@R=2pt@C=2pt{
&&&\\
&*{}\ar@{-}[dl]\ar@{-}[dr]\ar@{-}[u]&&*{}\ar@{-}[d]*{}\ar@{-}[u]\\
*{}\ar@{-}[d]&&*{}\ar@{-}[dr]&*{}\ar@{-}[dl]\\
*{}\ar@{-}[dr]&&*{}\ar@{-}[dl]&*{}\ar@{-}[dd]\\
&*{}\ar@{-}[d]&&\\
&&&
}}}

\def\cpbE{\vcenter{\xymatrix@R=2pt@C=2pt{
&&&&\\
&&&*{}\ar@{-}[u]\ar@{-}[dl]\ar@{-}[dr]&\\
&&*{}\ar@{-}[d]&&*{}\ar@{-}[ddd]\\
*{}\ar@{-}[uuu]\ar@{-}[dr]&&*{}\ar@{-}[dl]&&\\
&*{}\ar@{-}[d]&&&\\
&&&&
}}}

\def\cpbF{\vcenter{\xymatrix@R=2pt@C=2pt{
&&&\\
&&*{}\ar@{-}[u]&\\
*{}\ar@{-}[uu]*{}\ar@{-}[dr]&*{}\ar@{-}[dl]\ar@{-}[ur]&&*{}\ar@{-}[ul]\ar@{-}[d]\\
*{}\ar@{-}[dd]&*{}\ar@{-}[dr]&&*{}\ar@{-}[dl]\\
&&*{}\ar@{-}[d]&\\
&&&
}}}

\def\cpbG{\vcenter{\xymatrix@R=2pt@C=2pt{
&&&&&\\
&*{}\ar@{-}[dl]\ar@{-}[dr]\ar@{-}[u]&&&*{}\ar@{-}[dl]\ar@{-}[dr]\ar@{-}[u]&\\
*{}\ar@{-}[d]&&*{}\ar@{-}[dr]&*{}\ar@{-}[dl]&&*{}\ar@{-}[d]\\
*{}\ar@{-}[dr]&&*{}\ar@{-}[dl]&*{}\ar@{-}[dr]&&*{}\ar@{-}[dl]\\
&*{}\ar@{-}[d]&&&*{}\ar@{-}[d]&\\
&&&&&&
}}}

\def\Kdeuxlabel{\xymatrix@R=4pt@C=4pt{
&& & \texttt{a} & && &&& \\
& &&*{}\ar[dddrrrr]\ar[ddll]&& &&& &\\
& u &\quad &  && v && & &\\
\texttt{b} &*{}\ar[dd]& && &&& && \\
{}\quad w && & Z && &&*{}\ar[dddllll]& \texttt{c} & \\
\texttt{d} &*{}\ar[ddrr]&& && &&& & \\
&y&   && &x&&  & & \\
&& &*{}&& &&& & \\
&& &\texttt{e}& &&&& & 
}}

\def\KdeuxDiag{\xymatrix@R=12pt@C=12pt{
&&& *{}\ar@{-}[ddddrrrr]\ar@{-}[dddlll]&&& & \\
&&&  &&&  &\\
&&& \texttt{a}Z && &  & \\
*{}\ar@{-}[dd]& uw &*{}\ar@{-}[uu]\ar@{-}[dr]\ar@{-}[dll]&  &&&  & \\
*{}\ar@{-}[drr]&& uy & *{}\ar@{-}[ddrr]\ar@{-}[dl]\ar@{-}[uurr] && v x &  &*{}\ar@{-}[ddddllll] \\
*{}\ar@{-}[dddrrr]& wy & *{}\ar@{-}[dd]&  &&&  & \\
&&& Z\texttt{e} &&&  & \\
&&&  &&&  & \\
&&& *{} &&&  & \\
}}

\def\Kzero{\xymatrix@R=4pt@C=4pt{
\\
\\
\\
{\bullet}\\
}}

\def\KunA{\xymatrix@R=4pt@C=4pt{
&&\\
&&\\
&&\\
*{}\ar@{-}[rr]&&*{}\\
}}

\def\PdeuxA{\xymatrix@R=4pt@C=4pt{
&&&&&\\
&&*{}\ar@{-}[dll]\ar@{-}[drr]  &&& \\
*{}\ar@{-}[dd] &&&&*{}\ar@{-}[dd] &\\
&&&&& \\
*{}\ar@{-}[drr] &&&&*{}\ar@{-}[dll] & \\
&&*{}&&& \\
}}

\def\PtroisA{\xymatrix@R=4pt@C=4pt{
&&&*{}\ar@{-}[dll]\ar@{.}[dr]\ar@{-}[r] &*{}\ar@{-}[drr]\ar@{-}[dll] &&& \\
&*{}\ar@{-}[ddl]\ar@{-}[r]&*{}\ar@{-}[dd] & &*{}\ar@{.}[ddl]\ar@{.}[drr]&&*{}\ar@{-}[dd]\ar@{-}[dr]& \\
&&& & &&*{}\ar@{.}[r]\ar@{.}[ddl]&*{}\ar@{-}[dd] \\
*{}\ar@{-}[dd]\ar@{.}[dr]&&*{}\ar@{-}[ddl]\ar@{-}[drr]&*{}\ar@{.}[dll]\ar@{.}[drr] & &&*{}\ar@{-}[dll]\ar@{-}[dr]& \\
&*{}\ar@{.}[dd]&& &*{}\ar@{-}[ddl] &*{}\ar@{.}[dd]&&*{}\ar@{-}[ddl] \\
*{}\ar@{-}[dr]\ar@{-}[r]&*{}\ar@{-}[drr]&& & &&& \\
&*{}\ar@{-}[drr]&&*{}\ar@{-}[dr] & &*{}\ar@{.}[dll]\ar@{.}[r]&*{}\ar@{-}[dll]& \\
&&&*{}\ar@{-}[r] &*{} &&& \\
}}

\def\cpbDeuxDeux#1#2{\vcenter{\xymatrix@R=2pt@C=2pt{
*{}\ar@{-}[d]&&*{}\ar@{-}[d]\\
*{}\ar@{-}[dr]&&*{}\ar@{-}[dl]\\
&#1\ar@{-}[dd]&\\
&&\\
&#2\ar@{-}[dl]\ar@{-}[dr]&\\
*{}\ar@{-}[d]&&*{}\ar@{-}[d]\\
*{}&&*{}
}}}

\def\cpbCC#1#2{\vcenter{\xymatrix@R=2pt@C=2pt{
&*{}\ar@{-}[d]&&&*{}\ar@{-}[ddd]\\
&#1\ar@{-}[dl]\ar@{-}[dr]&&&\\
*{}\ar@{-}[ddd]&&*{}\ar@{-}[d]&&\\
&&*{}\ar@{-}[dr]&&*{}\ar@{-}[dl]\\
&&&#2\ar@{-}[d]&\\
*{}&&&*{}&
}}}

\def\cpbEE#1#2{\vcenter{\xymatrix@R=2pt@C=2pt{
*{}\ar@{-}[ddd]&&&*{}\ar@{-}[d]&\\
&&&#1\ar@{-}[dl]\ar@{-}[dr]&\\
&&*{}\ar@{-}[d]&&*{}\ar@{-}[ddd]\\
*{}\ar@{-}[dr]&&*{}\ar@{-}[dl]&&\\
&#2\ar@{-}[d]&&&\\
&*{}&&&*{}
}}}

\def\cpbtroisquatre{\vcenter{\xymatrix@R=6pt@C=6pt{
&&&&\\
&*{}\ar@{-}[u]&&*{}\ar@{-}[u]&\\
&&\mu\ar@{-}[ul]\ar@{-}[uu]\ar@{-}[ur]&&\\
&&\dd\ar@{-}[u]\ar@{-}[dll]\ar@{-}[dl] \ar@{-}[drr]\ar@{-}[dr] &&\\
*{}\ar@{-}[d]&*{}\ar@{-}[d]&&*{}\ar@{-}[d]&*{}\ar@{-}[d]\\
&&&&
}}}

\def\cpbmultiple{\vcenter{\xymatrix@R=6pt@C=6pt{
&&&&    &&&&    &&&&     \\
&&&&  *{}\ar@{-}[u]  &&&&   *{}\ar@{-}[u]   &&&&    \\
*{}\ar@{-}[rrrrrrrrrrr] &*{}\ar@{-}[uu]&&*{}\ar@{-}[ur]&  *{}\ar@{-}[uu]  &*{}\ar@{-}[ul]&*{}\ar@{-}[urr]  &*{}\ar@{-}[ur]  &  *{}\ar@{-}[u]    &&*{}\ar@{-}[ull]  &*{}&    \\
&*{}&*{}&*{}&  *{} &*{}& \omega &*{}& *{}&*{}&*{}&*{}& *{}   &*{}&*{}&*{}& *{}   \\
*{}\ar@{-}[uu] \ar@{-}[rrrrrrrrrrr] &*{}&*{}&*{}&   *{} &*{}&*{}&*{}&  *{}&*{}& *{}    &*{}\ar@{-}[uu] &  *{}  \\
&&*{}\ar@{-}[d]\ar@{-}[ul]\ar@{-}[ur]  &&  *{}\ar@{-}[d] \ar@{-}[u] 
  &&*{}\ar@{-}[d] \ar@{-}[ul] \ar@{-}[u]\ar@{-}[ur]  &&&*{}\ar@{-}[d] \ar@{-}[ul] \ar@{-}[ur] &    \\
&&&&    &&&&      &&&&    \\
}}}

\def\cubesDeux{
\xymatrix@R=6pt@C=6pt{
&& && *{}\ar@{-}[ddllll]\ar@{-}[drr]\ar@{-}[dd]&& &&\\
&& && &&*{}\ar@{-}[dd] &&\\
*{}\ar@{-}[dddd]&& &&*{}\ar@{-}[ddllll]\ar@{-}[drr] &&&& \\
&& *{}\ar@{-}[ddrrrr]\ar@{-}[dd]  && &&*{}\ar@{-}[dll]&&\\
*{}&& && *{}&& && =\qquad \\
&& *{}\ar@{-}[ddrrrr]\ar@{-}[dll]  && &&*{}\ar@{-}[dd] &&\\
*{}\ar@{-}[ddrrrr]&& &&*{}&&&&\\
&& && &&*{}\ar@{-}[dll]&&  \\
&& &&*{} && &&
}}

\def\cubesTrois{
\xymatrix@R=6pt@C=6pt{
&& && *{}\ar@{-}[ddllll]\ar@{-}[drr]\ar@{-}[dd]&& \\
&& *{}\ar@{-}[dddd]&& &&*{}\ar@{-}[dd] \\
*{}\ar@{-}[dddd]&& &&*{}\ar@{-}[dll]\ar@{-}[drr] && \\
&& *{}\ar@{-}[ddrrrr]&& &&*{}\ar@{-}[dll]\\
&& && *{}\ar@{-}[dd]&& \\
&& *{}\ar@{-}[ddrrrr]\ar@{-}[dll]  && &&*{}\ar@{-}[dd] \\
*{}\ar@{-}[ddrrrr]&& &&*{}\ar@{-}[dll]&&\\
&& *{}&& &&*{}\ar@{-}[dll]  \\
&& &&*{} && 
}}

\def\cubeuwy{\xymatrix@R=8pt@C=8pt{
& u && *{}\ar@{-}[dd]\ar@{-}[dll]\ar@{-}[drr] &\ & & \\
& *{}\ar@{-}[dd]\ar@{.}[drr] && && *{}\ar@{-}[dd]\ar@{.}[dll] & \\
w & && *{}\ar@{-}[dll]\ar@{-}[drr] \ar@{.}[dd]&& & \\
 & *{}\ar@{-}[drr] && && *{}\ar@{-}[dll] & \\
 & y && *{} && & 
}}

%% Shuffle

%\ProvidesFile{Ushuffle.fd}
%\DeclareFontFamily{U}{shuffle}{} 
%\DeclareFontShape{U}{shuffle}{m}{n}{<5-8>shuffle7  <8->shuffle10}{}
% \NeedsTeXFormat{LaTeX2e}
%\ProvidesPackage{shuffle}[2008/10/27 Shuffle product symbol] 
%\DeclareSymbolFont{Shuffle}{U}{shuffle}{m}{n}
%\DeclareMathSymbol\shuffle{\mathbin}{Shuffle}{"001} 
%\DeclareMathSymbol\cshuffle{\mathbin}{Shuffle}{"002}

% Newcommand

\newcommand{\pAB}{\partial^A_B}

\newcommand{\Bk}{\B_\kappa}
\newcommand{\Ok}{\Omega_\kappa}

\newcommand{\pullback}{\mbox{\LARGE{$\lrcorner$}}}
\newcommand{\pushout}{\mbox{\LARGE{$\ulcorner$}}}
\newcommand{\pullbackbis}{\mbox{\LARGE{$\urcorner$}}}

\def\CCC{{\mathcal{C}}}
\def\TTT{{\mathcal{T}}}

\def\acc{^{\scriptstyle \emph{!`}}}

\def\hh{\mathrm{h}}
\def\pp{\mathrm{p}}
\def\ii{\mathrm{i}}

\newcommand{\Pac}{\PP^{\ac}}
\newcommand{\Pacc}{\PP^{\acc}}

\newcommand{\qi}{\xrightarrow{\sim}}

\newcommand{\coker}{\mathop{\mathrm{coker}}}

\newcommand{\Po}{\mathcal{P}}
\newcommand{\F}{\mathcal{F}}
\newcommand{\Fil}{\mathscr{F}}

\newcommand{\Y}{\vcenter{\xymatrix@M=0pt@R=6pt@C=6pt{
\ar@{-}[dr] &  &\ar@{-}[dl]  \\
 &\ar@{-}[d] &  \\  & &}}}
\newcommand{\YYY}{\vcenter{\xymatrix@M=0pt@R=6pt@C=6pt{
\ar@{-}[dr] &  &\ar@3{-}[dl]  \\
 &\ar@3{-}[d] &  \\  & &}}}
\newcommand{\cop}{\vcenter{\xymatrix@M=0pt@R=6pt@C=6pt{
 & \ar@{-}[d] & \\
 &\ar@{-}[dr] \ar@{-}[dl] &  \\  & &}}}

\newcommand{\copL}{\xymatrix@M=0pt@R=6pt@C=6pt{
 & \ar@{-}[d] & \\
 &\ar@{-}[dr] \ar@{-}[dl] &  \\  & &\\  & &\\  & &}}
\newcommand{\YL}{\vcenter{\xymatrix@M=0pt@R=6pt@C=6pt{
\ar@{-}[dr] &  &\ar@{-}[dl]  \\
 &\ar@{-}[d] &   \\  & &\\  & &\\  & &}}}
\newcommand{\YYL}{\vcenter{\xymatrix@M=0pt@R=6pt@C=6pt{
\ar@{-}[dr] &  &\ar@2{-}[dl]  \\
 &\ar@2{-}[d] &  \\  & &\\  & &\\  & &}}}
\newcommand{\LYY}{\vcenter{\xymatrix@M=0pt@R=6pt@C=6pt{
\ar@{-}[dr] &  &\ar@2{-}[dl]  \\
 &\ar@2{-}[d] &  \\  & &\\  & &\\  & &}}}

\newcommand{\XX}{\vcenter{\xymatrix@M=0pt@R=6pt@C=6pt{\ar@{-}[ddrr]&&\ar@{-}[ddll] \\ && \\ &&   }}}

\newcommand{\Ta}{\vcenter{\xymatrix@M=0pt@R=6pt@C=6pt{ \ar@{-}[dddrrr] && \ar@{-}[dl] &&  \\
&&& \ar@{-}[dl]  &  \\ &&&&  \ar@{-}[dl]  \\&&&  \ar@{-}[d] &
\\&&&& }}}
\newcommand{\Tb}{\vcenter{\xymatrix@M=0pt@R=6pt@C=6pt{  & \ar@{-}[dr]&&\ar@{-}[dl] \\
\ar@{-}[dr]&&\ar@{-}[dl]& \\&\ar@{-}[dr]&&\ar@{-}[dl]
\\&&\ar@{-}[d]& \\&&& }}}
\newcommand{\Tc}{\vcenter{\xymatrix@M=0pt@R=6pt@C=6pt{   \ar@{-}[dr]&&\ar@{-}[dl]& \\
&\ar@{-}[dr]&& \ar@{-}[dl] \\\ar@{-}[dr]&&\ar@{-}[dl]&
\\&\ar@{-}[d]&& \\&&& }}}
\newcommand{\Td}{\vcenter{\xymatrix@M=0pt@R=6pt@C=6pt{ && \ar@{-}[dr]&&\ar@{-}[dddlll] \\
 &\ar@{-}[dr]&&& \\ \ar@{-}[dr]&&&& \\& \ar@{-}[d]&&& \\&&&&  }}}
\newcommand{\Te}{\vcenter{\xymatrix@R=3pt@C=3pt{\ar@{-}[drdr] &&\ar@{-}[dl]  *=0{}
\ar@{-}[dr]&& \ar@{-}[ddll] \\ &&& *=0{}& \\&& *=0{} \ar@{-}[d]&&
\\&&&& }}}
\newcommand{\TaC}{\vcenter{\xymatrix@M=0pt@R=6pt@C=6pt{ \ar@{-}[ddddddrrrrrr] && \ar@{-}[dl] && && \\
&&& \ar@{-}[dl]  &&&  \\ &&&&  \ar@{-}[dl]&&  \\&&& &&&
\\&&&&\ar@{-}[dl]&& \\&&&&&\ar@{-}[dl]&\\&&&&&& }}}
\newcommand{\TreeL}{\vcenter{\xymatrix@M=0pt@R=5pt@C=5pt{ \ar@{-}[dr] &
&\ar@{-}[dl] & &  \\
& \ar@{-}[dr] & &\ar@{-}[dl]  & \\
& &\ar@{-}[d] & & \\
& & \\ & & }}}
\newcommand{\TreeR}{\vcenter{\xymatrix@M=0pt@R=5pt@C=5pt{
 & &\ar@{-}[dr] & & \ar@{-}[dl]  \\
& \ar@{-}[dr] & &\ar@{-}[dl]  & \\
& &\ar@{-}[d] & & \\
& & \\ & & }}}

\def\dd{\mathrm{d}}

%%%%%%%%%%%%%%%% Draftnote  %%%%%%%%%%%%%%%%%%%%

%\newcommand{\draftnote}[1]{\marginpar{\raggedright\textsf{\hspace{0pt}#1}}}
\newcommand{\draftnote}[1]{}

%%%%%%%%%%%%%%%%%%%%% Table of contents (format) %%%%%%%%

\setcounter{tocdepth}{1}

%%%%%%%%%%%%%%%%%%%%%% MSC, keywords, thanks

\subjclass[2010]{Primary 18G55; Secondary 18D50}

\keywords{Homotopy algebra, operad, model category}

\thanks{This work was supported by the ANR grants HOGT and SAT}

%%%%%%%%%%%%%%%%%%%%%%%%%%%%%%%   Titre et auteur %%% 

\title{Homotopy theory of homotopy algebras}

\author{Bruno Vallette}

\dedicatory{In memoriam JLL}

\date{\today}

\address{Laboratoire Analyse, G\'eom\'etrie et Applications, Universit\'e Paris 13, Sorbonne Paris Cit\'e, CNRS, UMR 7539, 93430 Villetaneuse, France.}
\email{vallette@math.univ-paris13.fr}

\keywords{Homotopical algebra, model category, operad.}

\begin{document}

\maketitle

\begin{abstract}
This paper studies the homotopy theory of algebras and homotopy algebras over an operad. 
It provides an exhaustive description of their higher homotopical properties using the more general notion of morphisms called infinity-morphisms. 
The method consists in using the operadic calculus to endow the category of coalgebras over the Koszul dual cooperad or the bar construction with a new type of model category structure, Quillen equivalent to that of algebras. 
We provide an explicit homotopy equivalence for infinity-morphisms, which gives a simple description of the homotopy category, and we endow the category of homotopy algebras with an infinity-category structure. 
\end{abstract}

\tableofcontents

\section*{Introduction}
To define derived functors in  non-necessarily additive setting, D. Quillen generalized the ideas of A. Gro\-then\-dieck \cite{Grothendieck57} and introduced the notion of model category \cite{Quillen67}. A derived functor, being defined up to quasi-isomorphisms, it finds its source in the homotopy category, which is the original category localized with respect to quasi-isomorphisms. (This process is the categorical analogue of the construction of the field of rational numbers, where one starts from the ring of integers and formally introduce inverses for the non-zero numbers). For instance, Quillen homology theory for algebras of ``any" type is defined by deriving the functor of indecomposables, see \cite[Chapter~$12$]{LodayVallette12}. 

\smallskip

So it becomes crucial to be able to describe the homotopy category of algebras, and more generally the homotopy theory of algebras. Using the free algebra functor, Quillen explained how to transfer the cofibrantly generated model category of chain complexes to the category of differential graded algebras. His main theorem, then asserts that the homotopy category is equivalent to the full subcategory of fibrant-cofibrant objets, with morphisms up to a certain homotopy equivalence relation. In this model category structure \cite{Hinich97}, all the algebras are fibrant but the cofibrant ones are not so easily described: they are actually the retracts of quasi-free algebras on generators endowed with a suitable filtration. 

\smallskip

In his seminal paper on Rational Homotopy Theory \cite{Quillen69}, Quillen proved that several algebraic and topological homotopy categories are equivalent (differential graded Lie algebras, differential graded cocommutative coalgebras, topological spaces, simplicial spaces, etc.). 
For instance, one can find there a way to describe the homotopy category of differential graded Lie algebras
as the homotopy category of differential graded cocommutative coalgebras.
However, one \emph{problem} and one \emph{question} arise there.
The aforementioned Quillen equivalences hold only under a strong connectivity assumption; and why do the category of Lie algebras admits the ``dual'' category of cocommutative coalgebras?

\smallskip

The \emph{problem} was solved by Hinich \cite{Hinich01}, see also  Lefevre-Hasegawa \cite{LefevreHasegawa03}, who showed how to bypass the connectivity assumption by considering on cocommutative coalgebras a new class of weak equivalences, which is strictly included in the class of quasi-isomorphisms. The underlying idea is quite natural: the cobar functor going from differential graded cocommutative coalgebras to differential graded Lie algebras does not preserve quasi-isomorphisms. So if one wants this functor to form a Quillen equivalence, one has to find a class of morphisms of coalgebras which are sent to quasi-isomorphisms of Lie algebras. This forces the definition of weak equivalences of coalgebras. 

\smallskip

The \emph{question} is answered by the Koszul duality for operad \cite{GinzburgKapranov94, GetzlerJones94}. One encodes the category of Lie algebras with an operad and its Koszul dual cooperad is the one which encodes the category of cocommutative coalgebras. 

\smallskip

In the present paper, we settle  the homotopy theory of algebras over an operad as follows. 
First, we consider  the category of coalgebras over the Koszul dual cooperad, when the original operad is Koszul, or over the bar construction of the operad, in general. Then, we endow it with a new type of model category structure, Quillen equivalent to the one on algebras and  where the class of weak equivalences is strictly included in the class of quasi-isomorphisms. In this model category, all the coalgebras are cofibrant and the fibrant ones are the quasi-free ones, that is the ones for which the underlying coalgebra, forgetting the differential, is cofree. This already provides us with a simpler subcategory of fibrant-cofibrant objects. 

\begin{theointro} [\ref{theo:MCcoalg}]$\ $
The category of conilpotent dg $\Pacc$-coalgebras admits a model category structure, Quillen equivalent to that of dg $\PP$-algebras, such that every object is cofibrant and fibrant object are the quasi-free ones.  
 \end{theointro}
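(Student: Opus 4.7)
The plan is to endow conilpotent dg $\Pacc$-coalgebras with a model structure whose \emph{cofibrations} are the degreewise monomorphisms, whose \emph{weak equivalences} are the morphisms $f$ with $\Omega_\kappa f$ a quasi-isomorphism of dg $\PP$-algebras, and whose \emph{fibrations} are the morphisms with the right lifting property against acyclic cofibrations. With cofibrations equal to monomorphisms, every coalgebra is automatically cofibrant since $0 \mono C$ for any $C$. Two-out-of-three and closure under retracts for weak equivalences are inherited along $\Omega_\kappa$ from the corresponding properties on the algebra side, which was already known from the transferred model structure on dg $\PP$-algebras.

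The main difficulty is the factorization axiom, since the naive small-object argument on the coalgebra side is obstructed by conilpotency: one cannot attach cells freely without respecting the coradical filtration. My strategy is to build factorizations by transport across the adjunction $\Omega_\kappa \dashv \B_\kappa$. Given $f\colon C \to D$, I would factor $\Omega_\kappa f$ in the standard model structure on dg $\PP$-algebras, apply $\B_\kappa$, and then splice in the unit $\eta_C\colon C \to \B_\kappa\Omega_\kappa C$ so that the composite has source $C$. The key technical input is that $\eta_C$ is always an injection and a weak equivalence, the latter being a direct consequence of the Koszul bar--cobar resolution $\varepsilon_A\colon \Omega_\kappa \B_\kappa A \qi A$ provided by the assumed Koszul duality results. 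I expect the verification that the resulting map is genuinely a fibration (resp.\ acyclic fibration) to be the delicate point, and to require a small-object argument internal to the subcategory of quasi-free coalgebras where lifts can be built degree by degree along the conilpotent filtration.

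Once the model structure is established, the Quillen equivalence is essentially formal. By construction, $\Omega_\kappa$ preserves and reflects weak equivalences; it sends monomorphisms of coalgebras to cofibrations of algebras because the cobar is quasi-free on the underlying graded level. The derived unit is a weak equivalence by the argument just recalled, and the derived counit $\varepsilon_A$ is a quasi-isomorphism by Koszul duality, so both triangle identities produce weak equivalences and the adjunction is a Quillen equivalence.

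Finally, to identify the fibrant objects, I would show directly that a quasi-free coalgebra $C = \Pacc(V)$ satisfies the right lifting property against acyclic cofibrations: a morphism into a coalgebra which is cofree on the underlying graded level is determined by its corestriction to the cogenerators $V$, and solving the lift reduces, modulo the differentials, to an unobstructed linear problem. Conversely, any fibrant $C$ can be shown to be a retract of $\B_\kappa\Omega_\kappa C$ by factoring the acyclic cofibration $\eta_C$ as an acyclic cofibration followed by a fibration and applying the retract argument; hence $C$ is quasi-free as claimed.
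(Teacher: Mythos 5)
Your overall architecture (the same three classes of maps, transport of factorizations across the bar--cobar adjunction, the unit/counit arguments for the Quillen equivalence, and a retract argument for fibrant objects) is the one the paper actually uses. However, there are two genuine gaps.

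First, the factorization axiom. After factoring $\Omega_\kappa f$ as a cofibration followed by a fibration through some $A$ and applying $\B_\kappa$, you propose to ``splice in the unit $\eta_C$ so that the composite has source $C$.'' That fixes the source but not the target: the composite $C \to \B_\kappa\Omega_\kappa C \to \B_\kappa A \to \B_\kappa\Omega_\kappa D$ lands in $\B_\kappa\Omega_\kappa D$, and there is no natural map $\B_\kappa\Omega_\kappa D \to D$ (the unit of this adjunction points the other way). The paper resolves this by forming the pullback $\B_\kappa A \times_{\B_\kappa\Omega_\kappa D} D$ along $\upsilon_\kappa D$, and the delicate point is then not a small-object argument but the verification that the projection $\B_\kappa A \times_{\B_\kappa\Omega_\kappa D} D \to \B_\kappa A$ remains an acyclic cofibration; this is Lemma~\ref{lemm:AcyclicCofibration}, which requires splitting $A$ as $\Ker(p)\oplus\PP(D)$ and a nontrivial filtered spectral sequence comparison. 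Your proposal does not address either the target problem or this lemma.

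Second, the identification of fibrant objects. In one direction, you assert that lifting against an acyclic cofibration into a quasi-free coalgebra is ``an unobstructed linear problem''; it is not. Extending the corestriction to cogenerators is easy, but compatibility with the codifferential $d_\mu$ is an obstructed inductive problem, and the relevant obstruction groups are not obviously acyclic because weak equivalences here are not quasi-isomorphisms of underlying complexes. The paper avoids this entirely: it exhibits $(\Pacc(A),d_\mu)=\widetilde{\B}_\iota A$ as a retract of $\B_\kappa\Omega_\kappa C$ using an explicit $\infty$-morphism right inverse to the rectification unit, and $\B_\kappa\Omega_\kappa C$ is fibrant since $\B_\kappa$ preserves fibrations. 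In the converse direction, your final step ``hence $C$ is quasi-free'' does not follow from $C$ being a retract of $\B_\kappa\Omega_\kappa C$: a retract of a coalgebra that is cofree on the underlying graded level need not itself be cofree. The paper must construct the candidate isomorphism $\Xi : C \to \Pacc(\mathrm{Prim}\, C)$ from the retraction and prove it is an isomorphism by induction on the weight filtration, using the acyclicity of the Koszul complex of $q\PP$; this is where the Koszul hypothesis enters essentially, and it is missing from your argument.

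A smaller point: that $\Omega_\kappa$ sends monomorphisms of conilpotent coalgebras to cofibrations of algebras is not merely because the cobar construction is quasi-free on the underlying graded level (quasi-free algebras need a Sullivan-type filtration to be cofibrant); one must use the conilpotency, via the weight filtration, to exhibit $\Omega_\kappa f$ as a sequential colimit of standard cofibrations.
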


The method consists in extending Lefevre-Hasegawa's strategy from associative algebras to any algebra over an operad $\PP$, using heavily the operadic calculus developed in \cite[Chapters 10-11]{LodayVallette12}. (Notice that we try to provide complete proofs for all the results, like the form of fibrant objects, for instance, which was not given on the level of associative algebras).

\smallskip

Notice how the present theory parallels that of Joyal--Lurie of $\infty$-categories. To get a suitable notion of higher category, one can endow the category of simplicial sets with a new model category made up of less weak equivalences than the Quillen--Kan classical ones. In this case, all the objects are cofibrant  and the fibrant ones provide us with the notion of quasi-category, which is one model of  $\infty$-category. In our case, we consider a new model category on $\PP^{\ac}$-coalgebras with less weak equivalences. All the objects are cofibrant and the fibrant ones give us the suitable notion of $\PP_\infty$-algebra together with a good notion of morphisms that we call $\infty$-morphisms. 

\medskip 

\begin{center}
\begin{tabular}{|l|l|l|l|}
\hline
& {\sc Classical notion}\rule{0pt}{10pt} & 
{\sc Homotopy generalization}& {\sc Context}\\
\hline
Joyal--Lurie& Category \rule{0pt}{10pt} &
$\infty$-category& Simplicial sets \\
\hline
Present paper & $\PP$-algebras &$\PP_\infty$-algebras with $\infty$-morphisms &  $\PP^{\ac}$-coalgebras  \rule{0pt}{11pt}\\ 
\hline
\end{tabular}
\end{center}
\medskip

So this new model category structure induces a new homotopy theory for homotopy algebras with their $\infty$-morphisms. To complete the picture, we solve the problem of a functorial cylinder object inducing a universal homotopy relation for $\infty$-morphisms. (Notice that this question is not trivial, for instance, the author was not able to fix a crucial gap in \cite{BuijsMurillo13} related to this issue.)  The cylinder object that  we present here is proved to be equivalent to ``all'' the equivalence relations that have been considered so far  in the literature. 
With it, we prove that 
any $\infty$-quasi-isomorphism  admits a homotopy inverse, property which 
 does not hold for strict quasi-isomorphisms. 
Moreover, the initial category of algebras sits inside the category of homotopy algebras with their $\infty$-morphisms. And it can be proved that the second one retracts onto the first one. We are done:  the homotopy category of algebras is equivalent the following simple one. 

\begin{theointro} [\ref{theo:EquivalenceHoMAIN}] The following categories are equivalent
$$ 
\mathsf{Ho}(\mathsf{dg}\  \PP\textsf{-}\mathsf{alg}) \ \simeq\  
 \infty\textsf{-}\PP\textsf{-}\mathsf{alg}/\sim_{h} \ .$$ 
\end{theointro}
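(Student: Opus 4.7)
The plan is to transport the problem along the Quillen equivalence supplied by Theorem \ref{theo:MCcoalg}, which already yields
$$
\mathsf{Ho}(\mathsf{dg}\ \PP\textsf{-}\mathsf{alg}) \simeq \mathsf{Ho}(\mathsf{conilp.\ dg}\ \Pacc\textsf{-}\mathsf{coalg}),
$$
so it is enough to identify the right-hand side with $\infty\textsf{-}\PP\textsf{-}\mathsf{alg}/{\sim}_h$. First I would invoke Quillen's fundamental description of $\mathsf{Ho}$ as the full subcategory of fibrant-cofibrant objects with morphisms modulo left homotopy. Since Theorem \ref{theo:MCcoalg} asserts that \emph{every} conilpotent dg $\Pacc$-coalgebra is cofibrant, the fibrant-cofibrant objects reduce to the fibrant ones, namely the quasi-free coalgebras $(\Pacc(V), d_\varphi)$.

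The second step matches this subcategory with $\infty\textsf{-}\PP\textsf{-}\mathsf{alg}$ via the operadic calculus of \cite{LodayVallette12}. A square-zero coderivation $d_\varphi$ on $\Pacc(V)$ is the same data as a Maurer--Cartan element in the convolution pre-Lie algebra $\Hom_{\mathbb{S}}(\Pacc, \End_V)$, hence a $\PP_\infty$-algebra structure on $V$. Dually, a morphism of quasi-free $\Pacc$-coalgebras is freely determined by its projection onto the cogenerators, and compatibility with the twisted differentials translates precisely into the defining equation of an $\infty$-morphism. This yields a bijection of hom-sets on both sides, so the subcategory of fibrant coalgebras is isomorphic (as a category) to $\infty\textsf{-}\PP\textsf{-}\mathsf{alg}$.

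The third step shows that the model-categorical left homotopy relation on these hom-sets coincides with ${\sim}_h$. I would construct a functorial cylinder object $\mathrm{Cyl}(\mathcal{C})$ in conilpotent dg $\Pacc$-coalgebras as a quasi-free coalgebra built on an ``interval'' cogenerating module, factoring the fold map $\mathcal{C}\sqcup \mathcal{C}\to \mathcal{C}$ as a cofibration followed by a weak equivalence in the refined sense of Theorem \ref{theo:MCcoalg}. Translating left homotopies out of $\mathrm{Cyl}(\mathcal{C})$ through the dictionary of the second step recovers exactly the relation $\sim_h$ advertised for $\infty$-morphisms; reflexivity, symmetry and transitivity of $\sim_h$ on morphisms into a fibrant target then follow from the standard Quillen argument.

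The hard part is precisely this last step: one must write the twisted differential on the interval-coalgebra so that both end inclusions are genuine cofibrations, the projection is a weak equivalence in the strong sense (not merely a quasi-isomorphism), and the resulting ${\sim}_h$ is an honest equivalence relation compatible with composition of $\infty$-morphisms rather than only a zig-zag. Once this functorial cylinder is in place, the theorem follows by assembling the three steps; as a by-product one recovers the Whitehead-type statement that every $\infty$-quasi-isomorphism admits a homotopy inverse, since in a model category any weak equivalence between fibrant-cofibrant objects has an inverse up to the cylinder-homotopy just constructed.
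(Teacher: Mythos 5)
Your first three steps track the paper's own proof almost exactly: the Quillen equivalence of Theorem~\ref{theo:MCcoalg}--$(3)$ (Corollary~\ref{coro:EquivHo}), the reduction of $\mathsf{Ho}(\mathsf{conil}\ \mathsf{dg}\ \Pac\textsf{-}\mathsf{coalg})$ to fibrant-cofibrant objects modulo homotopy using Theorem~\ref{theo:MCcoalg}--$(2)$, and the isomorphism of categories between quasi-free dg $\Pac$-coalgebras and $\PP_\infty$-algebras with $\infty$-morphisms. The cylinder issue you single out as ``the hard part'' is genuinely where the technical work lies (the paper resolves it in Proposition~\ref{prop:Cylinder} via Dupont's contraction and the Homotopy Transfer Theorem), but for the bare statement it is not strictly needed: since every object is cofibrant and the relevant targets are fibrant, the abstract left-homotopy relation coming from any cylinder produced by the factorization axiom already gives a well-defined equivalence relation identified with $\sim_h$.

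The genuine gap is at the end. What your argument proves is
$\mathsf{Ho}(\mathsf{dg}\ \PP\textsf{-}\mathsf{alg}) \simeq \infty\textsf{-}\PP_\infty\textsf{-}\mathsf{alg}/\sim_h$,
i.e.\ an equivalence with the category of \emph{all} $\PP_\infty$-algebras (equivalently all quasi-free $\Pac$-coalgebras) with $\infty$-morphisms up to homotopy. The statement to be proved has $\infty\textsf{-}\PP\textsf{-}\mathsf{alg}/\sim_h$ on the right-hand side, which in the paper's notation is the full subcategory of \emph{strict} dg $\PP$-algebras equipped with $\infty$-morphisms between them. You silently conflate the two: in your second step you write ``$\infty\textsf{-}\PP\textsf{-}\mathsf{alg}$'' while describing Maurer--Cartan elements in $\Hom_\Sy(\Pac,\End_V)$, which are arbitrary $\PP_\infty$-structures. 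Passing from $\infty\textsf{-}\PP_\infty\textsf{-}\mathsf{alg}/\sim_h$ to its full subcategory $\infty\textsf{-}\PP\textsf{-}\mathsf{alg}/\sim_h$ requires the inclusion to be essentially surjective, and this is exactly the Rectification Theorem~\ref{theo:Rectification}: every $\PP_\infty$-algebra $A$ admits an $\infty$-quasi-isomorphism $A \stackrel{\sim}{\rightsquigarrow} \Omega_\kappa \widetilde{\B}_\iota A$ to a strict dg $\PP$-algebra, and such an $\infty$-quasi-isomorphism is an isomorphism in the quotient by $\sim_h$ (Theorem~\ref{theo:InftyQiInv}). Without invoking rectification, the stated equivalence does not follow from your three steps.
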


This gives a simple description of the first homotopical level of information about algebras over an operad. However, using the  simplicial localization methods of Dwyer--Kan \cite{DwyerKan80}, one can use the full power of this new model category structure on coalgebras to endow the category of homotopy algebras together with $\infty$-morphism with an $\infty$-category structure, thereby encoding all their higher homotopical information. 

\begin{theointro} [\ref{thm:inftycat}] 
The category $ \infty\textsf{-}\PP_\infty\textsf{-}\mathsf{alg}$ of $\PP_\infty$-algebra 
with $\infty$-morphisms extends to a simplicial category giving the same underlying homotopy category. 
\end{theointro}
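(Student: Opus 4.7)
The strategy is to transport a simplicial enrichment through the Quillen equivalence of Theorem~\ref{theo:MCcoalg}. Recall that $\infty\textsf{-}\PP_\infty\textsf{-}\mathsf{alg}$ is equivalent to the full subcategory of fibrant objects in the model category of conilpotent dg $\Pacc$-coalgebras, since the fibrant objects are quasi-free and morphisms of quasi-free $\Pacc$-coalgebras are, by cofreeness of the underlying coalgebra, in bijection with $\infty$-morphisms of the underlying $\PP_\infty$-algebras. As \emph{every} object is cofibrant by Theorem~\ref{theo:MCcoalg}, these fibrant objects are in fact the fibrant-cofibrant ones.

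The plan is to apply Dwyer--Kan's hammock (equivalently simplicial) localization $L^H$ from \cite{DwyerKan80} to the model category of conilpotent dg $\Pacc$-coalgebras. By the general theory, $L^H$ produces a simplicial category out of any category with weak equivalences, and when the source is a model category, its restriction to fibrant-cofibrant objects is weakly equivalent to the simplicial category built from functorial cosimplicial (or simplicial) frames; moreover, $\pi_0$ of the mapping spaces recovers the usual homotopy category. Applied to our situation this yields a simplicial category whose underlying category is (equivalent to) $\infty\textsf{-}\PP_\infty\textsf{-}\mathsf{alg}$ and whose homotopy category is $\mathsf{Ho}(\mathsf{dg}\ \Pacc\textsf{-}\mathsf{coalg})$, which the Quillen equivalence identifies with $\mathsf{Ho}(\mathsf{dg}\ \PP\textsf{-}\mathsf{alg})$.

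To make the mapping spaces concrete (and thereby verify that the underlying ordinary category is genuinely $\infty\textsf{-}\PP_\infty\textsf{-}\mathsf{alg}$ and not merely equivalent to it), I would iterate the functorial cylinder object used in the proof of Theorem~\ref{theo:EquivalenceHoMAIN} to obtain a cosimplicial resolution of each fibrant $\Pacc$-coalgebra. Equivalently, one can describe the $n$-simplices of $\mathrm{Map}(A,B)$ via a twisted tensor product involving the Sullivan dg commutative algebra $\Omega_n$ of polynomial forms on $\Delta^n$, extending the cylinder construction for $n=1$; the simplicial structure is then induced by the cosimplicial structure of $\Omega_\bullet$.

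The main obstacle will be the last compatibility check: that the simplicial set $\mathrm{Map}(A,B)$ has vertices the $\infty$-morphisms and that its $\pi_0$ recovers exactly the homotopy relation $\sim_h$ of Theorem~\ref{theo:EquivalenceHoMAIN}. The first point requires recognizing the $0$-simplices of the chosen cosimplicial frame as $\Hom_{\Pacc\textsf{-}\mathsf{coalg}}(\B A,\B B)$, while the second amounts to comparing the $1$-simplices with the specific functorial cylinder of the preceding section. Granting these identifications, the theorem follows from the Dwyer--Kan comparison and Theorem~\ref{theo:EquivalenceHoMAIN} above.
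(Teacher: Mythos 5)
Your proposal is correct and follows essentially the same route as the paper: the paper's proof is precisely a direct application of the Dwyer--Kan simplicial localization of \cite{DwyerKan80} to the model category of conilpotent dg $\Pacc$-coalgebras, restricted to the fibrant-cofibrant (quasi-free) objects. Your additional sketch of concrete mapping spaces via cosimplicial frames and Sullivan forms is a reasonable elaboration but goes beyond what the paper records.
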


Another direct corollary of the model category structure on coalgebras endows the category of homotopy algebras  with a fibrant objects category structure \cite{Brown73}. We reinforce this statement by proving that it actually carries a model category structure, except for the existence of some limits and colimits. In this case, the description of the three classes of structure maps is simple: weak equivalences (respectively cofibrations, respectively fibrations) are given by $\infty$-quasi-isomorphisms (respectively $\infty$-monomorphisms, respectively $\infty$-epimorphisms). For instance, this provides a neat description of fibrations between quasi-free coalgebras. 

\smallskip

There is already an extensive literature about model category structures on coalgebras over cooperads, see  \cite{Quillen67, Quillen69, GetzlerGoerss99, Hinich01, LefevreHasegawa03, AubryChataur03, Smith11, HessShipley14} for instance. The present result is more  general  for the  following three reasons. First, no assumption is needed here like bounded below chain complexes or finite dimensional space. Second, it treats the general case of any operad. Third, in most of the cases, the model category structures on coalgebras considers quasi-isomorphisms for weak equivalences. We show that such model category structures can be obtained from the present one by means of Bousfield localization. 

\smallskip 

Last but not least, let us mention the large range of applications of the present general homotopy theory. They include all the examples of algebraic structures treated in the compendium of Chapter~13 of \cite{LodayVallette12}. 
Therefore this applies to all the fields where homotopy algebras and $\infty$-morphisms play a role. 
So far, applications have been found, at least, in the following fields.

\begin{itemize}

\item[$\diamond$] {\sc $\infty$-morphisms of mixed complexes  ($D_\infty$-morphisms):}

\noindent cyclic homology, spectral sequences. 

\smallskip

\item[$\diamond$] {\sc $\infty$-morphisms of assocative algebras ($A_\infty$-morphisms):}

\noindent
algebraic topology (loop spaces, Massey products), symplectic geometry (Fukaya categories), probability theory (free probability).

\smallskip

\item[$\diamond$] {\sc $\infty$-morphisms of Lie algebras ($L_\infty$-morphisms):}

\noindent 
deformation theory, differential geometry (deformation quantization of Poisson manifolds).

\smallskip

\item[$\diamond$] {\sc $\infty$-morphisms of commutative algebras ($C_\infty$-morphisms):}

\noindent 
rational homotopy theory (K\"ahler manifolds).

\smallskip

\item[$\diamond$] {\sc $\infty$-morphisms of Gerstenhaber algebras ($G_\infty$-morphisms):}

\noindent 
Deligne conjecture, Drinfeld associators.

\smallskip

\item[$\diamond$] {\sc $\infty$-morphisms of Batalin--Vilkovisky algebras ($BV_\infty$-morphisms):}

\noindent 
Quantum cohomology (Frobenius manifolds), mirror symmetry.

\end{itemize}

In most of these cases, the homotopy theory of $\infty$-morphisms was possible to achieve ``by hands'' because the associated operad is rather small. This is not necessarily the case in the new appearing algebraic structures, like the homotopy Batalin--Vilkovisky algebras for instance. This point was the starting motivation for the development of the present general theory. 

\smallskip 

\noindent
\textbf{Layout.}  The paper is organized as follows. We begin with some recollections on operadic homological algebra and on the model category for algebras. In the second section, we endow the category of coalgebras over a Koszul dual cooperad with a new model category structure. The induced homotopy theory for $\infty$-morphisms is developed in Section~$3$.  The last section proves that there is  almost a model category structure  on homotopy algebras. Appendix~\ref{App:Obstruction} deals with the obstruction theory of $\infty$-morphisms and Appendix~\ref{app:TechLemma} contains the proof of a technical lemma. 

\smallskip 

\noindent
\textbf{Prerequisites.} The reader is supposed to be familiar with the notion of an operad and operadic homological algebra, for which  we refer to the book \cite{LodayVallette12}. In the present paper, we use the same notations as in loc. cit.. 

\smallskip

\noindent
\textbf{Framework.} Throughout this paper, we work over a field $\KK$ of characteristic $0$. Every chain complex is $\ZZ$-graded with homological degree convention, i.e. with degree $-1$ differential. 
 All the $\Sy$-modules $M=\lbrace M(n)\rbrace_{n\in\NN}$ are reduced, that is $M(0)=0$.

\section{Recollections}

In this section, we recall the main results about algebras over an operad \cite[Chapters~$10$--$11$]{LodayVallette12}, the (inhomogeneous) Koszul duality theory of operads \cite[Appendix~A]{GCTV12} and a (cofibrantly generated) model category structure for algebras \cite{Hinich97}.

\subsection{Operad and cooperad}

Recall that a  \emph{differential graded operad} (respectively a \emph{differential graded cooperad}) is a monoid (resp. a comonoid) in the monoidal category $(\textsf{dg}\ \Sy\textsf{-Mod}, \circ, \I) $ of differential graded $\Sy$-modules. 
A dg operad $\PP$ (resp. a dg cooperad $\CCC$)  is called \emph{augmented} (resp. \emph{coaugmented}) when it is equipped with an augmentation 
morphism $\epsilon \, : \, \PP \to \I$ of dg operads (resp. with a coaugmentation 
morphism $\eta \, :\, \I \to
\CCC $ of dg cooperads). 

\subsection{Algebra and coalgebra}

Let $\PP$ be a dg operad. A \emph{dg $\PP$-algebra} $(A, \gamma_A)$ is a left $\PP$-module concentrated in arity $0$: 
$$\gamma_A : \PP(A):=\PP \circ A=\bigoplus_{n\ge 1 } \PP(n)\t_{\Sy_{n}} A^{\t n} \to A\ . $$
A morphism of dg $\PP$-algebras is a morphism of dg modules $f : A\to B$ which commutes with the structure maps, i.e. $f \gamma_A=\gamma_B \PP(f)$.
This category is denoted by  $\textsf{dg} \ \PP\textsf{-alg}$. 

Dually, let $\CCC$ be a dg cooperad.  A \emph{conilpotent dg $\CCC$-coalgebra} $(C, \Delta_C)$ is a left $\CCC$-comodule concentrated in arity $0$: 
$$\Delta_C : C \to  \CCC(C):=\CCC \circ C=\bigoplus_{n\ge 1 } \CCC(n)\t_{\Sy_{n}} C^{\t n} \ . $$
This category is denoted  by $\textsf{conil}\ \textsf{dg} \ \CCC\textsf{-coalg}$.

In general, one defines the notion of a $\CCC$-coalgebra with the product and invariant elements:  $\Delta_C(C)\subset \prod_{n\ge 1 } (\CCC(n)\t C^{\t n})^{\Sy_{n}}$. Since we work over a field of characteristic $0$, we can identify invariant and coinvariant elements. We restrict here to the case where the image of the coproduct lies in the sum. We refer to \cite[Chapter~$5$]{LodayVallette12} for more details.

A dg $\PP$-algebra $A$ (resp. a dg $\CCC$-coalgebra $C$) is called \emph{quasi-free} when its underlying graded module, i.e. after forgetting the differential map, is isomorphic to a free $\PP$-algebra: $A\cong \PP(V)$ (resp. to a cofree $\CCC$-coalgebra: $C\cong \CCC(V)$). 

\subsection{Operadic homological algebra}\label{subsec:OHA}
Let $\PP$ be a dg operad and let $\CCC$ be a dg cooperad. The graded vector space of $\Sy$-equivariant maps 
$\Hom_\Sy(\CCC, \PP):=\prod_{n\ge 1} \Hom_{\Sy_n}(\CCC(n), \PP(n))$
from $\CCC$ to $\PP$ carries a natural dg Lie algebra structure, called the \emph{convolution Lie algebra}. An \emph{operadic twisting morphism} $\alpha \in \Tw(\CCC, \PP)$ is an element $\alpha : \CCC \to \PP$ of the convolution Lie algebra which satisfies the Maurer--Cartan equation: $$\partial \alpha + \frac{1}{2} [\alpha, \alpha]=0\ .$$

This operadic twisting morphism bifunctor is represented by the bar construction on the right-hand side and by the cobar construction on the left-hand side
$$\Hom_{\mathsf{dg} \ \mathsf{Op}}\left(\Omega \CCC,\, \PP\right) \cong
\mathrm{Tw}(\CCC,\, \PP) \cong  \Hom_{\mathsf{conil} \ \mathsf{dg} \ 
\mathsf{coOp}}\left(\CCC,\, \B \PP \right).$$
So these latter ones form a pair of adjoint functors. 
$$\B  \ : \    \textsf{augmented dg operads}  \ \rightleftharpoons\  
\textsf{conilpotent dg  cooperads}
   \ : \  \Omega \ . $$

Any twisting morphism $\alpha\in \Tw(\CCC, \PP)$ gives rise to twisted differentials $d_\alpha$ on the composite products $\CCC \circ \PP$ and $\PP\circ \CCC$. The resulting chain complexes are called \emph{left} and \emph{right twisting composite product} and are denoted $\CCC \circ_\alpha \PP$ and $\PP\circ_\alpha \CCC$ respectively, see \cite[Chapter~$6$]{LodayVallette12}.

\subsection{Bar and cobar constructions on the algebra level}\label{subsec:barcobarAlg}
Let  $\alpha \in \Tw(\CCC, \PP)$ be an operadic twisting morphism. 
 Let $(A, \gamma_A)$ be a dg
$\PP$-algebra and let $(C, \Delta_C)$ be a dg $\CCC$-coalgebra. We consider
the following unary operator $\star_\alpha$ of degree $-1$ on $\Hom(C,A)$:
$$\star_\alpha (\varphi) \ : \ C \xrightarrow{\Delta_C} \CCC \circ C
\xrightarrow{\alpha \circ \varphi} \PP\circ A
\xrightarrow{\gamma_A} A,\ \textrm{for}\ \varphi\in \Hom(C,A). $$ A \emph{twisting morphism with respect
to $\alpha$}
is a linear map $\varphi \, : \, C \to A$ of degree $0$ which is a
solution to the \emph{Maurer-Cartan} equation
$$\partial(\varphi) + \star_\alpha(\varphi)=0.$$ 
The space of twisting morphisms with respect to $\alpha$ is denoted by $\mathrm{Tw}_\alpha(C, A)$.

This twisting morphism bifunctor is represented by the the bar construction on the right-hand side and by the cobar construction on the left-hand side
$$\B_\alpha  \ : \    \textsf{dg} \ \PP\textsf{-alg}  \ \rightleftharpoons\  
\textsf{conil}\ \textsf{dg} \ \CCC\textsf{-coalg}
   \ : \  \Omega_\alpha \ . $$
So they form  a pair of adjoint functors. The underlying spaces are given by a cofree $\CCC$-coalgebra, $\B_\alpha A = \CCC(A)$, and by a free $\PP$-algebra, $\Omega_\alpha C =\PP(C)$, respectively. For more details, see \cite[Chapter~$11$]{LodayVallette12}.

\subsection{Koszul duality theory}
Let $(E,R)$ be a quadratic-linear data, that is $R \subset E \oplus \TTT(E)^{(2)}$. It gives rise to a quadratic operad $\PP:=\PP(E, R)=\TTT(E)/(R)$, where $\TTT(E)$ stands for the free operad on $E$ and where $(R)$ stands for the ideal generated by $R$. 

Let  $q:\TTT(E) \epi  \TTT(E)^{(2)}$ be the projection onto the quadratic part of the free operad. The image of $R$ under $q$, denoted $qR$, is homogeneous quadratic. So the associated quotient operad $q\PP:= \PP(E,qR)$ 
is homogeneous quadratic. We consider the homogeneous quadratic cooperad $q\PP^{\ac}:= \CCC(sE,s^2qR)$, where $s$ denotes the suspension map. 
 We assume that the space of relations $R$ satisfies the condition
$$(ql_1)\ : \ R\cap E =\{0\}\ , $$
which means  that the space of generators is minimal.  Under this assumption, there exists a map $\varphi :qR \to E$ such that $R$ is the graph of $\varphi$.
If $R$ satisfies the condition
$$(ql_2)\ : \  \{R\circ_{(1)} E + E \circ_{(1)} R \} \cap \TTT(E)^{(2)}  \subset  R\cap \TTT(E)^{(2)} \  , $$
which amounts to the maximality of the space of relations $R$, 
then the map $\varphi$ induces a square-zero coderivation $d_\varphi$ on the cooperad $q\PP^{\ac}$. For more details, we refer the reader to \cite[Appendix~A]{GCTV12} and to \cite[Chapter~$7$]{LodayVallette12}. From now on, we will always assume the two conditions $(ql_1)$ and $(ql_2)$. 

The dg cooperad $\PP^{\ac}:=(q\PP^{\ac}, d_\varphi)$ is called the \emph{Koszul dual  cooperad} of $\PP$. Notice that when the data $(E,R)$ is homogeneous quadratic, $qR=R$ and the differential map $d_\varphi$ vanishes. In this case, one recovers the homogeneous Koszul duality theory of Ginzburg--Kapranov and Getzler--Jones \cite{GinzburgKapranov94, GetzlerJones94}.

There is a canonical operadic twisting morphism $\kappa \in  \Tw(\PP^{\ac}, \PP)$ defined by the following composite 
$$ \kappa \, : \, \PP^{\ac}=\CCC(sE, s^2qR) \epi s E  \xrightarrow{s^{-1}} E \mono \PP\ . $$
The associated twisted composite product $\PP \circ_\kappa \Pac$ (resp. $\Pac \circ_\kappa \PP$) is called the \emph{Koszul complex}.

\begin{defi}$ \ $

\begin{enumerate}
\item A homogeneous quadratic operad is called a \emph{homogeneous Koszul operad} when its Koszul complex is acyclic. 

\item An quadratic-linear operad $\PP$ is called a \emph{Koszul operad} when its  presentation $\Po(E,R)$ satisfies conditions $(ql_1)$ and $(ql_2)$ and when the associated homogeneous quadratic operad $q\PP$ is homogeneous Koszul. 
\end{enumerate}
\end{defi}

When an inhomogeneous operad $\PP$ is Koszul, then its Koszul complexes $\PP \circ_\kappa \Pac$ and  $\Pac \circ_\kappa \PP$ are acyclic. 

\subsection{The general case}
For simplicity and to avoid discrepancy, the rest of the paper is written in the case of a Koszul presentation of the  operad $\Po$ using coalgebras over the Koszul dual cooperad $\Pac$. In general, one can always consider the trivial presentation made up of all the elements of $\Po$ as generators. This presentation is quadratic-linear and always Koszul. In this case, the Koszul dual cooperad is nothing but the bar construction $\B \Po$ of the operad $\Po$. So all the results of the present paper are always true if one considers this presentation and $\B \Po$-coalgebras. 

\subsection{Weight grading} Throughout the present paper, we will require an extra grading, other than the homological degree, to make all the proofs work. We work over the ground category of \emph{weight graded} dg $\Sy$-modules. This means that every dg $\Sy$-module is a direct sum of sub-dg $\Sy$-modules indexed by this weight $M_d = \bigoplus_{\omega \in \NN} M_d^{(\omega)}$, where $d$ stands for the homological degree and where  $\omega$  stands for the weight grading. In this context, 
the free operad is a weight graded dg operad, where the weight is given by the number of generators, or equivalently by the number of vertices under the tree representation. This induces a filtration on any quadratic operad $\PP=\TTT(E)/(R)$, where 
$$F_0 \PP=\I,\ F_1 \PP=\I \oplus E,\  \text{and}\ F_2 \PP=\I \oplus \frac{E\oplus \TTT(E)^{(2)}}{R}\ . $$
The underling cooperad of any Koszul dual cooperad is \emph{connected weight graded}: 
$$ q\Pac = \KK\, \I \oplus  q{\Pac}^{(1)} \oplus \cdots \oplus q{\Pac}^{(\omega)} \oplus \cdots =
\KK\, \I \oplus  sE \oplus s^2 R \oplus \cdots \ . $$
The coderivation $d_\varphi$ of the Koszul dual cooperad does not preserve this weight grading but lowers it by $1$: 
$$d_\varphi : q{\Pac}^{(\omega)} \to q{\Pac}^{(\omega-1)} \ . $$

\subsection{Homotopy algebra}
We denote by $\PP_\infty:=\Omega \PP^{\ac}$ the cobar construction of the Koszul dual cooperad of $\PP$. 
When the operad $\PP$ is a Koszul operad, this is a resolution $\PP_\infty \qi \PP$ of $\PP$.
Algebras over this dg operads are called \emph{$\PP_\infty$-algebras} or \emph{homotopy $\PP$-algebras}.

A $\PP_\infty$-algebra structure $\mu : \PP^{\ac} \to \End_A$ on a dg module $A$ is equivalently given by a square-zero coderivation $d_\mu$, called a codifferential, on the cofree $\PP^{\ac}$-coalgebra $\PP^{\ac}(A)$: 
$$
\Hom_{\mathsf{dg\ Op}}(\Omega\PP^{\ac},\, {\End}_A)  \cong  \mathop{\rm Codiff}(\PP^{\ac}(A))\ .$$ 
By definition, an \emph{$\infty$-morphism of $\PP_\infty$-algebras} is a morphism 
$$F \, : \big(\PP^{\ac}(A), d_\mu\big) \to \big(\PP^{\ac}(B), d_\nu\big)$$ of dg $\PP^{\ac}$-coalgebras.
The composite of two $\infty$-morphisms is defined as the composite of the associated morphisms of dg $\PP^{\ac}$-coalgebras:
$$F\circ G := \PP^{\ac}(A) \to \PP^{\ac}(B) \to \PP^{\ac}(C) \ .$$

The category of $\PP_\infty$-algebras with their $\infty$-morphisms  is denoted by $\infty\textsf{-}\PP_\infty\textsf{-alg}$. An $\infty$-morphism between $\PP_\infty$-algebras is denoted by 
$A \rightsquigarrow B$
to avoid confusion with the classical notion of morphism. 

Since an $\infty$-morphism $A \rightsquigarrow B$  is a morphism of $\Pac$-coalgebras $\PP^{\ac}(A) \to \PP^{\ac}(B)$, it is characterized by its projection $\PP^{\ac}(A) \to  B$ onto the space of generators $B$. The first component $A \cong \I(A) \subset \Pac(A) \to B$ of this map is a morphism of chain complexes. When this is a quasi-isomorphism (resp. an isomorphism), we say that the map $F$ is an \emph{$\infty$-quasi-isomorphism} (resp. an \emph{$\infty$-isomorphism}). One of the main property of $\infty$-quasi-isomorphisms, which does not hold for quasi-isomorphisms, lies in the following result. 
\begin{theo}[Theorem~$10.4.4$ of \cite{LodayVallette12}]\label{InverseInftyQI}
Let $\PP$ be a Koszul operad and let $A$ and $B$ be two $\PP_\infty$-algebras. If there exists an $\infty$-quasi-isomorphism $ A \stackrel{\sim}{\rightsquigarrow} B$, then there exists an $\infty$-quasi-isomorphism in the opposite direction $B\stackrel{\sim}{\rightsquigarrow} A$, which is the inverse of $H(A)\xrightarrow{\cong}H(B)$ on the level on holomogy. 
\end{theo}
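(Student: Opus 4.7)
The plan is to construct the inverse $\infty$-morphism $G : B \rightsquigarrow A$ by induction on the weight grading of the Koszul dual cooperad $\Pac$. Recall that an $\infty$-morphism $B \rightsquigarrow A$ is by definition a morphism $G : \Pac(B) \to \Pac(A)$ of dg $\Pac$-coalgebras, and that by cofreeness (forgetting the differential) it is entirely determined by its projection onto cogenerators, that is by a family of $\Sy_n$-equivariant maps $G_n : \Pac(n) \otimes B^{\otimes n} \to A$ for $n \geq 1$. I will build these components one weight at a time, starting with $G_1$, then assemble them and check that $G_1$ is a quasi-isomorphism, which by definition will make $G$ an $\infty$-quasi-isomorphism.

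For the base case, since the ground field has characteristic zero and $F_1 : A \to B$ is a quasi-isomorphism of chain complexes, I can choose a chain-level quasi-inverse $G_1 : B \to A$ together with contracting homotopies $h_A : A \to A$ and $h_B : B \to B$ satisfying $G_1 F_1 - \id_A = \partial h_A + h_A \partial$ and $F_1 G_1 - \id_B = \partial h_B + h_B \partial$. In particular $G_1$ already realizes the inverse of $H(F_1)$ on homology. For the inductive step, suppose that $G_1, \ldots, G_{n-1}$ have been constructed so that the coalgebra map identity $d_\mu \circ G = G \circ d_\nu$ holds modulo terms of weight $\geq n$. Projecting the remaining equation onto weight $n$ and onto the cogenerator $A$ yields a relation of the form
\[
\partial_A \circ G_n - G_n \circ \partial^{(n)}_B \ =\ R_n
\]
in the chain complex $\Hom(\Pac(n) \otimes_{\Sy_n} B^{\otimes n}, A)$, where $R_n$ is an explicit polynomial in the codifferentials $\mu,\nu$ and in the previously constructed $G_k$ with $k<n$. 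The existence of $G_n$ thus reduces to finding a primitive of $R_n$ in this Hom complex.

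The main obstacle is to prove that $R_n$ is a cycle and to produce the primitive $G_n$ in a way that preserves what was already built. Closedness of $R_n$ will follow from a direct but lengthy computation, combining the structure equations $d_\mu^2=0$ and $d_\nu^2=0$ (which encode that $A$ and $B$ are $\PP_\infty$-algebras) with the inductive hypothesis; the required cancellations are the operadic analogue of the classical ones for $A_\infty$-algebras and are organized by the weight filtration, which is where the Koszul hypothesis on $\Po$ enters. To produce $G_n$, I will apply the contracting homotopy $h_A$ to $R_n$ and propagate it through the lower components via a homological perturbation lemma style argument; this works precisely because $G_1$ is a chain homotopy inverse of $F_1$. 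Assembling the family $(G_n)_{n\geq 1}$ yields the desired $\Pac$-coalgebra morphism $G : \Pac(B) \to \Pac(A)$, whose first component $G_1$ is, by construction, a quasi-isomorphism inverting $H(F_1)$, so that $G$ is an $\infty$-quasi-isomorphism with the required property on homology.
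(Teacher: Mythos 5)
Your overall strategy --- building the inverse $\infty$-morphism $G$ component by component in the weight grading and invoking obstruction theory --- is a genuinely different route from the one the paper relies on: Theorem~$10.4.4$ of Loday--Vallette is proved there by first using the Homotopy Transfer Theorem to produce $\infty$-quasi-isomorphisms $H(A)\rightsquigarrow A$ and $B\rightsquigarrow H(B)$, observing that the composite $H(A)\rightsquigarrow H(B)$ has an \emph{isomorphism} as first component, inverting that $\infty$-isomorphism by the explicit recursive formula of Theorem~$10.4.1$ (the same formula this paper uses in the proofs of Theorem~\ref{theo:MCcoalg}--$(2)$ and Proposition~\ref{prop:InftyqiWE}), and composing back. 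Your direct induction avoids that detour, but it runs into the one point that the transfer argument is designed to circumvent.

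The gap is in the inductive step. Theorem~\ref{thm:Obstruction} (or your $R_n$) only tells you that the obstruction is a \emph{cycle} in $\Hom_\Sy\big({\Pac}^{(n)},\End^B_A\big)$; the existence of $G_n$ requires it to be a \emph{boundary}, and this Hom complex is not acyclic in general (its homology is essentially $\Hom\big(H({\Pac}^{(n)}\otimes_{\Sy_n}B^{\otimes n}),H(A)\big)$ by K\"unneth in characteristic $0$), so there is a genuine obstruction class to kill. Your proposed remedy --- post-compose $R_n$ with the homotopy $h_A$ --- does not do this: since $\partial h_A+h_A\partial=G_1F_1-\id_A$, one only gets $\partial(h_A\circ R_n)=\pm\big(G_1F_1\circ R_n-R_n\big)$, i.e.\ $R_n$ is cohomologous to $G_1F_1\circ R_n$, which is not visibly zero in homology. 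To conclude, one must use the full $\infty$-morphism $F$ (not just $F_1$ and the homotopies) to identify $F_1\circ R_n$ with a boundary, and making the ``perturbation-lemma-style propagation'' precise at this point amounts to reproving the Homotopy Transfer Theorem; as written, the induction does not close. A smaller inaccuracy: the closedness of the obstruction cycle is a purely formal consequence of the (co)associativity and coderivation identities, exactly as in Appendix~\ref{App:Obstruction}; the Koszul hypothesis plays no role there, so attributing the ``required cancellations'' to Koszulness misplaces where that hypothesis is actually used.
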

So being $\infty$-quasi-isomorphic is an equivalence relation, which we call the \emph{homotopy equivalence}.
A complete treatment of the notion of  $\infty$-morphism is given in \cite[Chapter~$10$]{LodayVallette12}.

\subsection{The various categories}

We apply the arguments of Section~\ref{subsec:barcobarAlg} to the universal twisting morphism $\iota  \, :\, \Pac \to
\Omega \Pac=\PP_\infty$ and to the canonical twisting morphism $\kappa\, :\, \PP^{\ac} \to \PP$. This provides us with two bar-cobar adjunctions respectively.

By definition of $\infty$-morphisms, the bar construction 
$ \B_\iota :  \PP_\infty\textsf{-alg}  \to \textsf{conil}\ \textsf{dg} \ \Pac\textsf{-coalg}$ 
extends to a functor 
$ \widetilde{\B}_\iota :  \infty\textsf{-}\PP_\infty\textsf{-alg}  \to \textsf{conil}\ \textsf{dg} \ \Pac\textsf{-coalg}$. This latter one actually lands in quasi-free $\Pac$-coalgebras, yielding an isomorphism of categories. These various functors form the following diagram. 
$$\xymatrix@R=50pt@C=50pt@M=5pt{
\textsf{dg} \ \PP\textsf{-alg}\  \ar@<+0.5ex>@^{^{(}->}[d]^i  \ar@_{->}@<-0.5ex>[r]_(0.45){\B_\kappa}
&
 \ar@<-0.5ex>@_{->}[l]_(0.55){\Omega_\kappa} \ \textsf{conil}\  \textsf{dg} \  \PP^{\ac}\textsf{-coalg} \\ 
 \infty\textsf{-}\PP_\infty\textsf{-alg} \ar[r]^(0.43)\cong_(0.43){\widetilde{\B}_\iota} \ar@/^2pc/@{-^{>}}[u]^{\Omega_\kappa \widetilde{\B}_\iota} 
&
  \textsf{quasi}\textsf{-}\textsf{free}\  \PP^{\ac}\textsf{-}\textsf{coalg} \ar@{^{(}->}[u] }$$

\begin{theo}[Rectification \cite{GCTV12}]\label{theo:Rectification}Let $\PP$ be a Koszul operad.
\begin{enumerate}
\item  The functors
$$\Omega_\kappa \widetilde{\B}_\iota \ : \  \infty\textsf{-}\PP_\infty\textsf{-}\mathsf{alg}\  \rightleftharpoons \
\mathsf{dg}\ \PP\textsf{-}\mathsf{alg} \ : \ i$$ 
form a pair of adjoint functors, where $i$ is right adjoint to $\Omega_\kappa \widetilde{\B}_\iota$.  

\item Any homotopy $\PP$-algebra $A$ is naturally $\infty$-quasi-isomorphic to the dg $\PP$-algebra $\Omega_\kappa \widetilde{\B}_\iota A$:
$$A \stackrel{\sim}{\rightsquigarrow}   \Omega_\kappa \widetilde{\B}_\iota A\ .$$
\end{enumerate}
\end{theo}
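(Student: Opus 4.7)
The plan is to derive both statements from the category isomorphism $\widetilde{\B}_\iota$ combined with the classical bar--cobar adjunction $\Omega_\kappa \dashv \B_\kappa$. The key preliminary observation is that the canonical twisting morphism factors as $\kappa = \pi \circ \iota$, where $\pi : \PP_\infty \twoheadrightarrow \PP$ is the natural projection. Consequently, for any dg $\PP$-algebra $B$ viewed as a $\PP_\infty$-algebra through $\pi$, the underlying twisting morphism $\Pac \to \End_B$ is in both cases the composite $(\PP\text{-structure}) \circ \kappa$, so the twisted differentials on the cofree $\Pac$-coalgebra $\Pac(B)$ built from $\iota$ and from $\kappa$ coincide; this gives a natural identification $\widetilde{\B}_\iota(iB) = \B_\kappa(B)$ of dg $\Pac$-coalgebras. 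Part (1) then follows by chaining two natural bijections:
$$\Hom_{\infty\textsf{-}\PP_\infty\textsf{-}\mathsf{alg}}(A,\, iB) \;\cong\; \Hom_{\mathsf{dg}\ \Pac\textsf{-}\mathsf{coalg}}\bigl(\widetilde{\B}_\iota A,\, \B_\kappa B\bigr) \;\cong\; \Hom_{\mathsf{dg}\ \PP\textsf{-}\mathsf{alg}}\bigl(\Omega_\kappa \widetilde{\B}_\iota A,\, B\bigr),$$
the first using the isomorphism $\widetilde{\B}_\iota$ together with the identification above, and the second being the bar--cobar adjunction at the level of $\kappa$. Naturality in both variables is inherited from each step.

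For part (2), the unit of this adjunction at $A$ is by construction the $\infty$-morphism $\eta_A : A \rightsquigarrow \Omega_\kappa \widetilde{\B}_\iota A$ corresponding under $\widetilde{\B}_\iota$ to the unit $\widetilde{\B}_\iota A \to \B_\kappa \Omega_\kappa \widetilde{\B}_\iota A$ of the bar--cobar adjunction at the dg $\Pac$-coalgebra $\widetilde{\B}_\iota A$. Unwinding this unit, its projection onto the cogenerators $\PP(\Pac(A))$ is the canonical inclusion $\Pac(A) \hookrightarrow \PP(\Pac(A))$ of leaves into the free $\PP$-algebra. Hence the first (linear) component of $\eta_A$ is simply the inclusion of generators
$$A \hookrightarrow \Pac(A) \hookrightarrow \PP\bigl(\Pac(A)\bigr) \,=\, \Omega_\kappa \widetilde{\B}_\iota A,$$
and it remains to show that this chain map is a quasi-isomorphism.

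To this end, I would run a spectral sequence argument on $\Omega_\kappa \widetilde{\B}_\iota A = \PP(\Pac(A))$ arising from a filtration tailored to isolate the $\kappa$-twist on the associated graded while pushing the strictly weight-lowering components of the codifferential $d_\mu$ (from the $\PP_\infty$-structure on $A$) and of the Koszul coderivation $d_\varphi$ into higher pages. The $E^0$-page is then identified with the Koszul complex $(\PP \circ_\kappa \Pac)(A)$ applied to $A$; by Koszulness of $\PP$, this complex is a resolution of $\I(A) = A$, so $E^1 \cong A$ is concentrated in $\Pac$-weight zero, where the surviving class is represented by precisely the inclusion above. The main obstacle is the convergence of this spectral sequence, which is exactly the reason for the auxiliary weight grading maintained throughout the paper: it makes the filtration finite, exhaustive, and complete on each weight component of $\Omega_\kappa \widetilde{\B}_\iota A$, ensuring convergence to the true homology and thus concluding that $\eta_A$ is an $\infty$-quasi-isomorphism.
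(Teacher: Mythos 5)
The paper does not actually prove this theorem: it is recalled from \cite{GCTV12} with a pointer to \cite[Chapter~11]{LodayVallette12}, so your proposal can only be compared with the standard argument there. For part~(1) you reproduce it faithfully: the identification $\widetilde{\B}_\iota(iB)=\B_\kappa B$ is correct (the twisting morphism $\Pac\to\End_B$ induced by $\iota$ on a strict $\PP$-algebra is $\gamma_B\circ\kappa$ because $\kappa$ factors through $\iota$), and composing the isomorphism of categories $\widetilde{\B}_\iota$ with the $\kappa$-bar--cobar adjunction is exactly how the adjunction is obtained. Part~(2) also has the right architecture: the unit's linear component is the inclusion $A\hookrightarrow\PP(\Pac(A))$, and one proves it is a quasi-isomorphism by a spectral sequence whose associated graded is a Koszul complex evaluated on $A$.

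The one point that does not cohere as written is the filtration in part~(2). You ask for a filtration that retains the $\kappa$-twist on the associated graded while pushing both the higher components of $d_\mu$ \emph{and} the coderivation $d_\varphi$ to later pages, and you simultaneously claim that the resulting $E^0$-page is $(\PP\circ_\kappa\Pac)(A)$. These two statements are incompatible in the inhomogeneous case, since $d_\varphi$ is part of the differential of the Koszul complex $\PP\circ_\kappa\Pac$; note also that the naive filtration by the weight of the middle $\Pac$ factor kills the $\kappa$-twist as well, since that twist also lowers the $\Pac$-weight. The filtration that does what you want is by the \emph{total} weight, i.e.\ the weight of the outer $\PP$ (via $F_l\PP$) plus the weight of the middle $\Pac$ -- the same filtration the paper uses in the fibrancy part of the proof of Theorem~\ref{theo:MCcoalg}--$(2)$: it is preserved by $d_A$ and by the $\kappa$-twist, and strictly lowered by $d_\varphi$ and by $\mu_{(k)}$ for $k\ge 1$, so the associated graded is the \emph{homogeneous} Koszul complex $(q\PP\circ_\kappa q\Pac)(A,d_A)$, which is acyclic onto $A$ because $\PP$ Koszul implies $q\PP$ homogeneous Koszul. (Filtering by the number of $A$-factors instead gives the genuine $(\PP\circ_\kappa\Pac)(A)$ but fails to kill the unary higher operations when $E(1)\neq 0$.) This filtration is exhaustive and bounded below, so convergence is standard. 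With that correction your argument is complete and agrees with the proof in the literature.
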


The dg $\PP$-algebra $\Omega_\kappa \widetilde{\B}_\iota A$, homotopically equivalent to the $\PP_\infty$-algebra $A$ is called the \emph{rectified} $\PP$-algebra.  We refer the reader to \cite[Chapter~$11$]{LodayVallette12} for more details. 

\subsection{Homotopy categories}
Recall that the \emph{homotopy category} $\textsf{Ho}(\mathsf{dg}\ \PP\textsf{-alg})$ (resp. 
$\textsf{Ho}(\infty\textsf{-}\PP_\infty\textsf{-alg})$) 
is  the localization of 
the category of dg $\PP$-algebras with respect to the class of quasi-isomorphisms 
 (resp. $\PP_\infty$-algebras with respect to the class of $\infty$-quasi-isomorphisms). 
The rectification adjunction of Theorem~\ref{theo:Rectification} induces an equivalence of categories between  these two homotopy categories. 

\begin{theo}[Theorem~$11.4.8$ of \cite{LodayVallette12}]\label{theo:HoEquiv}
Let $\PP$ be a Kos\-zul operad.  The homotopy category of dg $\PP$-algebras and the homotopy category of $\PP_\infty$-algebras with the $\infty$-morphisms are equivalent
$$\mathsf{Ho}(\mathsf{dg}\ \PP\textsf{-}\mathsf{alg}) \cong \mathsf{Ho}(\infty\textsf{-}\PP_\infty\textsf{-}\mathsf{alg})\ .$$
\end{theo}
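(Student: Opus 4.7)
The plan is to promote the rectification adjunction of Theorem~\ref{theo:Rectification} to an equivalence of the localized categories. Concretely, I will check that (a) both adjoint functors preserve the respective classes of weak equivalences, so they descend to the homotopy categories, and (b) the unit and counit of the adjunction are weak equivalences in their respective categories.

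For step (a), the inclusion $i : \mathsf{dg}\ \PP\textsf{-alg} \hookrightarrow \infty\textsf{-}\PP_\infty\textsf{-alg}$ sends a quasi-isomorphism $f : A \to B$ to the $\infty$-morphism whose only nontrivial component is $f$ itself; since $\infty$-quasi-isomorphisms are defined by the quasi-isomorphism property of the first (arity $1$) component, this is immediate. For $\Omega_\kappa \widetilde{\B}_\iota$, I would take an $\infty$-quasi-isomorphism $F : A \rightsquigarrow B$, view it as a morphism of dg $\Pac$-coalgebras $\Pac(A) \to \Pac(B)$ whose first component $f : A \to B$ is a quasi-isomorphism, and then filter $\Pac$ by the weight/coradical grading from Section~2.7. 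The associated graded map is $\Pac(f) : \Pac(A) \to \Pac(B)$ with only the internal differentials; applying $\Omega_\kappa$ term by term and comparing with the Koszul complex $\PP \circ_\kappa \Pac$, the map on the $E^0$ page is identified with $\PP \circ \Pac(f)$, which is a quasi-isomorphism by the Künneth formula (we work over a characteristic-zero field). Convergence of the resulting spectral sequence then yields that $\Omega_\kappa(F)$ is a quasi-isomorphism of dg $\PP$-algebras.

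For step (b), the counit $\varepsilon_A : \Omega_\kappa \widetilde{\B}_\iota\, i(A) = \Omega_\kappa \B_\kappa A \xrightarrow{\sim} A$ for a dg $\PP$-algebra $A$ is the classical operadic bar-cobar resolution, which is a quasi-isomorphism whenever $\PP$ is Koszul (this is the acyclicity of $\PP \circ_\kappa \Pac \circ_\kappa \PP$ tensored with $A$). The unit $\eta_A : A \stackrel{\sim}{\rightsquigarrow} \Omega_\kappa \widetilde{\B}_\iota\, A$ is exactly the $\infty$-quasi-isomorphism produced by part~(2) of Theorem~\ref{theo:Rectification}.

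Combining (a) and (b), the induced functors on localizations are adjoint, and both unit and counit are isomorphisms in the homotopy categories, hence the adjunction descends to an equivalence
\[
\mathsf{Ho}(\mathsf{dg}\ \PP\textsf{-alg}) \simeq \mathsf{Ho}(\infty\textsf{-}\PP_\infty\textsf{-alg}).
\]
The main obstacle is the preservation of weak equivalences by $\Omega_\kappa \widetilde{\B}_\iota$: unlike $i$, which is essentially tautological, the rectification functor is a two-step construction and its compatibility with $\infty$-quasi-isomorphisms really uses the Koszulity hypothesis through the acyclicity of the Koszul complex, which is what makes the spectral sequence collapse to a quasi-isomorphism on the first page.
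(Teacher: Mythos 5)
Your proof is correct and takes essentially the same route as the paper, which obtains this statement directly from the rectification adjunction of Theorem~\ref{theo:Rectification} (it is Theorem~11.4.8 of Loday--Vallette, quoted here as a recollection): both adjoints preserve weak equivalences, and the unit and counit are weak equivalences, exactly as you check. One minor misattribution: Koszulity is not what makes $\Omega_\kappa \widetilde{\B}_\iota$ preserve $\infty$-quasi-isomorphisms (that spectral-sequence step only uses the weight filtration and the K\"unneth formula, as in Propositions~\ref{lemm:FiltQIisWE} and \ref{lemm:QIdonneFilteredQI}); it is needed for the counit $\Omega_\kappa \B_\kappa A \to A$ to be a quasi-isomorphism.
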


\subsection{Model category for algebras}

A \emph{model category structure} consists of the data of three distinguished classes of maps: \emph{weak equivalences}, \emph{fibrations} and \emph{cofibration}, subject to five axioms. This extra data provided by fibrations and cofibrations gives a way to describe the homotopy category, defined by localization with respect to the weak equivalences. This notion is due to D. Quillen \cite{Quillen67}; we refer the reader to the reference book of M. Hovey \cite{Hovey99} for a comprehensive presentation.

\begin{theo}[\cite{Hinich97}]\label{theo:Hinich97}
The following classes of morphisms endow the  category of dg $\PP$-algebras with a model category structure.

\begin{itemize}
\item[$\diamond$] The class $\mathfrak W$ of \emph{weak equivalences} is given by the quasi-isomorphisms; 

\item[$\diamond$] the class $\mathfrak F$ of \emph{fibrations} is given by degreewise epimorphisms, $f_n : A_n \epi B_n$ ; 

\item[$\diamond$] the class $\mathfrak C$ of \emph{cofibrations} is given by the maps which satisfy the left lifting property with respect to acyclic fibrations $\mathfrak F \cap \mathfrak W$. 
\end{itemize}
\end{theo}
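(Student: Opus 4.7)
The plan is to apply Quillen's transfer principle (Kan's lifting theorem for cofibrantly generated model structures) along the free-forgetful adjunction
\[
\PP(-) \ : \ \mathsf{dg}\ \mathsf{Mod} \ \rightleftharpoons \ \mathsf{dg}\ \PP\textsf{-}\mathsf{alg} \ : \ U\ .
\]
The target category of chain complexes carries the standard projective model structure, cofibrantly generated by
\[
I = \{\iota_n \, : \, S^{n-1} \hookrightarrow D^n\}_{n \in \ZZ}, \qquad J = \{0 \to D^n\}_{n \in \ZZ}\ ,
\]
where $D^n$ is the acyclic complex $\KK \xrightarrow{\id} \KK$ concentrated in degrees $n$ and $n-1$ and $S^{n-1}$ is $\KK$ concentrated in degree $n-1$. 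The weak equivalences are quasi-isomorphisms and the fibrations are degreewise epimorphisms. The transferred classes are defined by saying that $f$ is a weak equivalence (resp.\ fibration) in $\mathsf{dg}\ \PP\textsf{-}\mathsf{alg}$ iff $U(f)$ is one in chain complexes; this matches the statement. The candidate generating (acyclic) cofibrations are $\PP(I)$ and $\PP(J)$.

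The first step is to check the hypotheses that are essentially formal. The category $\mathsf{dg}\ \PP\textsf{-}\mathsf{alg}$ is complete and cocomplete: limits are computed underlying, and colimits exist because $\PP$ is a monad on the cocomplete category of dg modules which preserves filtered colimits (as each $\PP(n)$ is a small $\Sy_n$-representation). The domains of $\PP(I)$ and $\PP(J)$ are small with respect to the whole category, again by a colimit-size argument on chain complexes. The classes of weak equivalences satisfy the two-out-of-three axiom and are closed under retracts since these properties can be tested on the underlying complexes.

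The only genuinely non-trivial step, which is the main obstacle, is Kan's acyclicity condition: every relative $\PP(J)$-cell complex must be a weak equivalence (equivalently, a quasi-isomorphism on underlying chain complexes). Since pushouts and transfinite compositions of quasi-isomorphisms along injections with acyclic cokernels are again quasi-isomorphisms, it suffices to show that the pushout
\[
\xymatrix{
\PP(0) \ar[r] \ar[d] & A \ar[d]^{f} \\
\PP(D^n) \ar[r] & B
}
\]
in $\mathsf{dg}\ \PP\textsf{-}\mathsf{alg}$ produces a quasi-isomorphism $f: A \to B$. Here one uses crucially that $\mathrm{char}(\KK) = 0$. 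Filtering $B$ by the number of generators coming from $D^n$, the associated graded has the form $A \oplus \bigoplus_{n \geq 1} \PP(n) \otimes_{\Sy_n} (A^{\otimes *} \otimes (D^n)^{\otimes *})$, and since $D^n$ is acyclic and we are in characteristic zero, the Künneth formula together with the fact that taking $\Sy_n$-coinvariants is exact (averaging via Maschke) shows that each filtration layer above $A$ is acyclic. Consequently $f$ is a quasi-isomorphism.

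Finally, the characterisation of cofibrations as maps with the left lifting property with respect to acyclic fibrations $\mathfrak{F} \cap \mathfrak{W}$ is automatic from the small object argument, which simultaneously provides the required functorial factorisations. With the five model category axioms (CM1--CM5) now verified, one concludes that $(\mathfrak{W}, \mathfrak{F}, \mathfrak{C})$ endows $\mathsf{dg}\ \PP\textsf{-}\mathsf{alg}$ with a cofibrantly generated model structure, as claimed.
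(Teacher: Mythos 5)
Your proposal is correct and follows exactly the route the paper indicates: the statement is quoted from Hinich, and the paper's only remark on its proof is that the structure is obtained by transferring the cofibrantly generated model structure on dg modules along the free--forgetful adjunction, which is precisely your argument, with the acyclicity condition for $\PP(J)$-cell attachments handled via the characteristic-zero exactness of coinvariants. (Your displayed formula for the associated graded of $A \vee \PP(D^n)$ is slightly imprecise --- the correct layers are quotients of $\PP(m+k)\otimes_{\Sy_m\times\Sy_k}(A^{\otimes m}\otimes (D^n)^{\otimes k})$ --- but each still carries an acyclic tensor factor $(D^n)^{\otimes k}$ for $k\ge 1$, so the conclusion stands.)
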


Notice that this model category structure is cofibrantly generated since it is obtained by transferring the cofibrantly generated model category structure on dg modules thought the free $\PP$-algebra functor, which   is left adjoint to the forgetful functor, see \cite[Section~$\textrm{II}.4$]{Quillen67}. 

Following D. Sullivan \cite{Sullivan77}, we call \emph{triangulated dg $\PP$-algebra} any quasi-free dg  $\PP$-algebra $(\PP(V), d)$ equipped with an exhaustive filtration 
$$V_0=\lbrace 0 \rbrace \subset V_1 \subset V_2 \subset \cdots \subset \textrm{Colim}_i V_i =V \ ,  $$
satisfying $d(V_i)\subset \PP(V_{i-1})$.

\begin{prop}[\cite{Hinich97}]
With respect to the aforementioned model category structure, every dg $\PP$-algebra is fibrant and a dg $\PP$-algebra is cofibrant if and only if its a retract of a triangulated dg $\PP$-algebra. 
\end{prop}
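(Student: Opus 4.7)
The plan has two independent parts. First, that every dg $\PP$-algebra is fibrant is immediate: the terminal object in $\mathsf{dg}\ \PP\textsf{-alg}$ is the zero algebra, and the unique map $A \to 0$ is surjective in every degree, hence a fibration in the sense of Theorem~\ref{theo:Hinich97}.

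For the characterization of cofibrant objects, I would prove the two directions separately. For the direction ``retract of a triangulated dg $\PP$-algebra $\Rightarrow$ cofibrant,'' I would use that cofibrations are closed under retracts (since they are characterized by a lifting property), so it suffices to show that a triangulated dg $\PP$-algebra $(\PP(V),d)$ with filtration $V_0 = 0 \subset V_1 \subset V_2 \subset \cdots$ is cofibrant. The map $0 \to \PP(V)$ is the sequential colimit of the inclusions $\PP(V_{i-1}) \hookrightarrow \PP(V_i)$, so by closure of cofibrations under transfinite composition it is enough to show that each such inclusion is a cofibration. Choosing a graded complement $W_i$ of $V_{i-1}$ in $V_i$, the condition $d(V_i) \subset \PP(V_{i-1})$ means that $d$ restricts to a map $W_i \to \PP(V_{i-1})$ whose values are cycles. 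Writing $D(W_i)$ for the cone on $W_i[-1]$ (viewed as a chain complex with zero differential), the inclusion $\PP(V_{i-1}) \hookrightarrow \PP(V_i)$ is the pushout in dg $\PP$-algebras of the generating cofibration $\PP(W_i[-1]) \hookrightarrow \PP(D(W_i))$ along the map $\PP(W_i[-1]) \to \PP(V_{i-1})$ sending $w \in W_i[-1]$ to $dw$. Since pushouts of generating cofibrations are cofibrations, the conclusion follows.

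For the converse direction, I would use the fact that, by Theorem~\ref{theo:Hinich97}, the model category structure on dg $\PP$-algebras is cofibrantly generated, obtained by transferring along the free-forgetful adjunction from chain complexes. The generating cofibrations are precisely the maps $\PP(S^{n-1}) \hookrightarrow \PP(D^n)$, where $S^{n-1}$ and $D^n$ are the standard sphere and disk chain complexes. By the small object argument, every cofibrant object (i.e.\ every object for which $0 \to A$ is a cofibration) is a retract of a \emph{cell complex}: an object obtained as a transfinite composition of pushouts of generating cofibrations starting from $0$. By construction, such a cell complex carries a natural filtration, indexed by the stages of the construction, which exhibits it as a triangulated dg $\PP$-algebra, since the differential of each newly adjoined generator lies, by the very definition of a pushout along $\PP(S^{n-1}) \hookrightarrow \PP(D^n)$, in the $\PP$-subalgebra generated by the previous stages.

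The main subtle point, which I would handle carefully, is the comparison between the indexing: the small object argument a priori produces a filtration indexed by an ordinal that may be larger than $\omega$, while the definition of a triangulated dg $\PP$-algebra uses a filtration indexed by $\NN$. However, since at each successor step one may attach an arbitrary set of cells of arbitrary degrees at once, any ordinal-indexed cell structure can be collapsed into an $\NN$-indexed one by grouping together all cells whose boundary lies in the algebra generated by strictly lower stages; this reindexing, together with the verification that the pushout decomposition above coincides with one step of the triangulated filtration, is the only technical point of the argument.
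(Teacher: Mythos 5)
Your argument is correct and is essentially the standard one: the paper itself gives no proof of this proposition (it is quoted from \cite{Hinich97}), but the machinery you use --- cofibrant objects as retracts of relative $\PP(I)$-cell complexes, and the identification of one cell-attachment step with the pushout producing $A\vee_\alpha\PP(V)$ from a degree $-1$ map $\alpha$ landing in cycles --- is exactly what the paper recalls in its later section on fibrations and cofibrations. The only point worth recording explicitly in your reindexing step is why every cell has finite depth: the attaching datum of a single cell is a single element of a free graded $\PP$-algebra on the earlier cells and hence involves only finitely many of them, so a transfinite induction shows the depth function is $\NN$-valued and your $\NN$-indexed regrouping by depth exhausts all cells and satisfies $d(V_i)\subset\PP(V_{i-1})$.
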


So this model category structure is right proper.

\section{Model category structure for coalgebras}

For a Koszul operad $\Po$,  we endow the category of conilpotent dg $\PP^{\ac}$-coalgebras with a model category structure which makes the bar-cobar adjunction a Quillen equivalence with dg $\Po$-algebras. 

\subsection{Main theorem}

\begin{defi}
In the category of conilpotent dg $\PP^{\ac}$-coalgebras, we consider the following three classes of morphisms. 

\begin{itemize}
\item[$\diamond$] The class $\mathfrak W$ of \emph{weak equivalences} is given by the morphisms of dg $\Pac$-coalgebras $f : C \to D$ whose image $\Omega_\kappa f : \Omega_\kappa C \qi \Omega_\kappa D$  under the cobar construction is a quasi-isomorphism of dg $\PP$-algebras;
\item[$\diamond$] the class $\mathfrak C$ of \emph{cofibrations} is given  by degreewise monomorphisms, $f_n : C_n \mono D_n$ ; 
\item[$\diamond$] the class $\mathfrak F$ of \emph{fibrations} 
is given by the maps which satisfy the right lifting property with respect to acyclic cofibrations $\mathfrak C \cap \mathfrak W$. 
\end{itemize}
\end{defi}

\begin{theo}\label{theo:MCcoalg}\leavevmode 

\begin{enumerate}
\item Let $\PP$ be a Koszul operad. 
The aforementioned three classes  of morphisms  form a model category structure on conilpotent dg $\Pacc$-coalgebras. 

\item With this model category structure, every conilpotent dg $\Pacc$-coalgebra is cofibrant; so this model category is left proper. A conilpotent dg $\Pacc$-coalgebra is fibrant if and only  if it is isomorphic to a 
quasi-free dg $\Pacc$-coalgebra.

\item The bar-cobar adjunction 
$$\Bk  \ : \    \mathsf{dg} \ \PP\textsf{-}\mathsf{alg}  \ \rightleftharpoons\  
\mathsf{conil}\ \mathsf{dg} \ \PP^{{^{\scriptstyle \emph{!`}}}}\textsf{-}\mathsf{coalg}
   \ : \  \Ok \ . $$
is a Quillen equivalence. 
\end{enumerate}

\end{theo}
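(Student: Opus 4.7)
The plan is to adapt the strategy of Lefevre-Hasegawa~\cite{LefevreHasegawa03}, originally developed for associative algebras, to algebras over an arbitrary Koszul operad, leveraging the operadic calculus of Section~\ref{subsec:OHA}. For part~(1), axioms MC1--MC3 are straightforward: conilpotent $\Pac$-coalgebras admit all colimits (computed in the underlying category) and all limits (constructed via the cofree coalgebra functor); the 2-out-of-3 property for weak equivalences follows from that of quasi-isomorphisms via the definition through $\Omega_\kappa$; and each of the three classes is closed under retracts. For MC4, the lifting of acyclic cofibrations against fibrations is automatic by definition, while the lifting of cofibrations against acyclic fibrations will follow from the factorization axiom by the standard retract argument.

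The heart of the proof is the factorization axiom MC5. I would construct both factorizations as transfinite compositions of elementary cell attachments performed along the coradical filtration of conilpotent $\Pac$-coalgebras. The key technical lemma, which I expect to be the main obstacle and is proved in Appendix~\ref{app:TechLemma}, identifies a class of ``standard'' acyclic cofibrations obtained from $C$ by attaching a cofree $\Pac$-coalgebra built on the cone of a chain complex $W$, whose image under $\Omega_\kappa$ is itself an acyclic cofibration of dg $\PP$-algebras. Given any morphism $f : C \to D$, one inductively enlarges the source by such cells to kill successive obstructions on cohomology, producing a factorization $C \stackrel{\sim}{\mono} \widetilde{C} \to D$ into an acyclic cofibration followed by a fibration; the factorization into a cofibration followed by an acyclic fibration is obtained by a dual procedure, attaching cells on plain chain complexes rather than on cones.

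For part~(2), the initial object $0$ embeds into every conilpotent $\Pac$-coalgebra, so every object is cofibrant and the model structure is left proper. Quasi-free coalgebras $(\Pac(V), d)$ are fibrant: to solve a lifting problem against an acyclic cofibration, one constructs the lift by induction on the weight grading of $\Pac$ using the obstruction theory of Appendix~\ref{App:Obstruction}, where acyclicity of the cofiber ensures that each obstruction vanishes at the next weight. Conversely, if $C$ is fibrant, applying the factorization of part~(1) to the adjunction unit $C \to \Bk \Omega_\kappa C$ yields an acyclic cofibration $C \stackrel{\sim}{\mono} C'$ into a quasi-free object; fibrancy of $C$ produces a retraction, exhibiting $C$ as a retract of a quasi-free coalgebra, which in turn can be shown to be itself isomorphic to a quasi-free one.

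For part~(3), since $\Omega_\kappa$ sends weak equivalences of coalgebras to quasi-isomorphisms by definition, it suffices to verify that $\Omega_\kappa$ preserves cofibrations in order to obtain a Quillen pair. Given a monomorphism $C \hookrightarrow D$ of conilpotent coalgebras, the coradical filtration of $D$ expresses $\Omega_\kappa C \to \Omega_\kappa D$ as a transfinite composition of cell attachments of the form $\PP(V) \hookrightarrow \PP(V \oplus W)$, hence as a cofibration in the Hinich model structure. For the Quillen equivalence, the counit $\epsilon_A : \Omega_\kappa \Bk A \qi A$ is a quasi-isomorphism for every dg $\PP$-algebra by the fundamental Koszul property of $\PP$. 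The unit $\eta_C : C \to \Bk \Omega_\kappa C$ is then a weak equivalence: the triangle identity gives $\epsilon_{\Omega_\kappa C} \circ \Omega_\kappa(\eta_C) = \id_{\Omega_\kappa C}$, and since $\epsilon_{\Omega_\kappa C}$ is a quasi-isomorphism, 2-out-of-3 forces $\Omega_\kappa(\eta_C)$ to be one as well.
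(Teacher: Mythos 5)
Your overall architecture for parts (1)(MC1--MC4) and (3) is sound, and your triangle-identity argument for the unit ($\varepsilon_{\Omega_\kappa C}\circ\Omega_\kappa(\upsilon_C)=\id$ plus two-out-of-three) is a legitimate, even slicker, alternative to the paper's filtered-quasi-isomorphism proof of Theorem~\ref{Thm:KoszulBarCobarRes}--$(2)$. But there are two genuine gaps. The first is in (MC5). You propose to build the acyclic-cofibration/fibration factorization by transfinitely attaching cofree cells on cones and ``killing obstructions on cohomology.'' Fibrations here are \emph{defined} by the right lifting property against acyclic cofibrations, and you have identified neither a set of generating acyclic cofibrations nor any other characterization of them, so there is no way to certify that your second factor $\widetilde{C}\to D$ has the required lifting property; moreover, weak equivalences are strictly finer than quasi-isomorphisms, so killing cohomological obstructions does not control them. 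The paper avoids this entirely: it factors $\Omega_\kappa f$ in the Hinich model structure on dg $\PP$-algebras, applies $\B_\kappa$ (which preserves fibrations by adjunction from $\Omega_\kappa$ preserving acyclic cofibrations), and takes the pullback $\B_\kappa A\times_{\B_\kappa\Omega_\kappa D}D\to D$; the second factor is then a fibration because fibrations are stable under base change. The technical Lemma~\ref{lemm:AcyclicCofibration} of Appendix~\ref{app:TechLemma} is about exactly this pullback being an acyclic cofibration --- not, as you guessed, about cell attachments on cones.

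The second gap is in part (2). Your argument correctly exhibits a fibrant $C$ as a retract of the quasi-free coalgebra $\B_\kappa\Omega_\kappa C$, but you then assert that ``a retract of a quasi-free coalgebra can be shown to be itself isomorphic to a quasi-free one.'' That assertion is the technical heart of the statement: the paper proves it by constructing the candidate isomorphism $\Xi\colon C\to\Pac(\mathrm{Prim}\,C)$ from the retraction and showing $\gr_n\Xi$ is an isomorphism by induction on the weight filtration, using the acyclicity of the Koszul complex $q\PP\circ_\kappa q\Pac(V)$ in positive weight. Nothing in your sketch supplies this. Relatedly, your proof that quasi-free objects are fibrant via the obstruction theory of Appendix~\ref{App:Obstruction} is not directly applicable: that obstruction theory concerns $\infty$-morphisms, i.e.\ maps between \emph{quasi-free} coalgebras, whereas a lifting problem against an arbitrary acyclic cofibration of conilpotent coalgebras involves non-quasi-free sources, and the ``cofiber'' of such a map need not be acyclic since weak equivalences are not quasi-isomorphisms. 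The paper's route --- $\Ok C$ is fibrant in algebras, $\Bk$ preserves fibrations, and $C$ retracts off $\Bk\Ok C$ via the rectification of Theorem~\ref{theo:Rectification} --- is what actually closes this step.
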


\subsection{Weight filtration}

\begin{defi}
Any $\Pac$-coalgebra $(C, \Delta_C)$ admits the following \emph{weight filtration}: 
$$F_n C :=\left\lbrace c\in C\,  | \, \Delta_C(c) \in \bigoplus_{\omega =0}^n {\Pac}^{(\omega)}(C) \right\rbrace \ . $$
For instance, the first terms are 
$$F_{-1} C =\lbrace 0 \rbrace \subset F_0 C :=\left\lbrace c\in C\, |\,  \Delta_C(c)=c \right\rbrace
\subset
F_1 C :=\left\lbrace c\in C\, |\,  \Delta_C(c) \in C \oplus {\Pac}^{(1)}(C) \right \rbrace \subset \cdots \ . 
$$
\end{defi}

\begin{examples}$ \ $

\begin{itemize}
\item[$\diamond$] For any cofree coalgebra $\Pac (V)$, the weight filtration is equal to $F_n \Pac(V)=\bigoplus_{\omega =0}^n {\Pac}^{(\omega)}(V) $.

\item[$\diamond$]
When the operad $\PP$ is the operad $As$, which encodes associative algebras, the Koszul dual cooperad $As^{\ac}=As^c$ encodes coassociative coalgebras. In this case, the weight filtration is equal, up to a shift of indices, to the coradical filtration of coassociative coalgebras, cf. \cite[Appendix~B]{Quillen69} and \cite[Section~$1.2.4$]{LodayVallette12}. 
\end{itemize}
\end{examples}

We consider the reduced coproduct $\bar\Delta_C(c):=\Delta_C(c)-c$. Its kernel is equal to $\mathrm{Prim}\,  C:=F_0 C$, which is called the \emph{space of primitive elements}. 

\begin{prop}\label{prop:WeightFiltr}
Let $C$ be a conilpotent dg $\Pacc$-coalgebra $(C, d_c, \Delta_C)$. 

\begin{enumerate}
\item Its weight filtration is exhaustive: $\bigcup_{n\in \NN} F_n C = C$. 

\item Its weight filtration satisfies
$$\bar\Delta_C(F_n C)\subset \bigoplus_{1\leq \omega \leq n,\  k\ge 1, \atop n_1+\cdots+n_k=n-\omega} 
{\Pac}^{(\omega)}(k) \t_{\Sy_k}(F_{n_1} C\t  \cdots \t F_{n_k} C)\ . $$

\item The differential preserves the weight filtration:  $d_c(F_n C)\subset F_n C$.
\end{enumerate}
\end{prop}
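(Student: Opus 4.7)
\begin{proo}[Proof proposal]
My plan is to treat the three items separately, with (2) being the substantive one and (1), (3) being essentially immediate formal consequences of the definitions.

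For (1), I would use that, by our convention on $\Pac$-coalgebras recalled in Section~1.2, the coaction lands in the \emph{direct sum} $\bigoplus_{\omega\ge 0}{\Pac}^{(\omega)}(C)$, not the product. Hence for every $c\in C$ only finitely many weight components of $\Delta_C(c)$ are non-zero, so if $n$ denotes the maximal weight appearing then $c\in F_n C$ by definition.

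For (2) I would proceed by induction on $n$, the key input being the coassociativity axiom
\[
(\Delta_\Pac\circ \id_C)\circ \Delta_C \;=\; (\id_\Pac\circ \Delta_C)\circ \Delta_C
\]
together with the fact that the decomposition map $\Delta_\Pac$ of the cooperad $q\Pac$ respects the weight: it sends ${q\Pac}^{(\omega)}$ into $\bigoplus {q\Pac}^{(\omega')}\big({q\Pac}^{(\omega_1)},\dots,{q\Pac}^{(\omega_k)}\big)$ with $\omega'+\omega_1+\cdots+\omega_k=\omega$. The base case $n=0$ is vacuous since $F_0 C=\Prim C$ and $\bar\Delta_C$ vanishes there. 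For the inductive step, take $c\in F_n C$ and write $\bar\Delta_C(c)=\sum \mu\otimes (c_1,\dots,c_k)$ with $|\mu|=\omega\le n$. Apply $\id_\Pac\circ\Delta_C$ to this element and compare, component by component of outer weight, with $(\Delta_\Pac\circ\id_C)(\Delta_C(c))$; since the outer weight of every term in the right-hand side is bounded by $n$ and the outer $\Delta_\Pac$-image of $\mu$ contributes weight $\omega$ to the outside and weights summing to $|\mu|-\omega'$ inside, an elementary count shows that each $\Delta_C(c_i)$ lies in $\bigoplus_{\omega\le n_i}\Pac^{(\omega)}(C)$ for integers $n_i$ with $n_1+\cdots+n_k\le n-\omega$. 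Since $F_m C\subset F_{m+1}C$, this is equivalent to the sum-equality formulation of the claim.

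For (3), I would invoke that $\Delta_C$ is a morphism of chain complexes, i.e.\ $\Delta_C\circ d_C = d_{\Pac(C)}\circ\Delta_C$, where $d_{\Pac(C)}$ has two contributions: the coderivation $d_\varphi$ of $\Pac$, which by the last sentence of Section~1.7 \emph{lowers} the weight by one, and the internal differential on $C$, which preserves the weight. Hence $d_{\Pac(C)}$ preserves the filtration $\bigoplus_{\omega\le n}{\Pac}^{(\omega)}(C)$, so $c\in F_n C$ implies $d_C(c)\in F_n C$.

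The only real obstacle is the bookkeeping in (2): one has to identify the components of $(\id_\Pac\circ\Delta_C)\Delta_C(c)$ and of $(\Delta_\Pac\circ\id_C)\Delta_C(c)$ under a \emph{multi-weight} decomposition (outer weight, inner weights, arity) and invoke the linear independence of these pieces in $\Pac\circ\Pac\circ C$ to extract the required estimate on each $\Delta_C(c_i)$. Everything else is a direct consequence of the definitions and of the weight-grading properties of $q\Pac$ and $d_\varphi$ recalled in Section~1.7.
\end{proo}
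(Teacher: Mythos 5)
Your proposal is correct and follows exactly the same route as the paper's (very terse) proof: exhaustiveness from conilpotency plus the connected weight grading of $\Pac$, point (2) from the coassociativity relation $\Pac(\Delta_C)\Delta_C=\Delta_{\Pac}(C)\Delta_C$ with a weight count, and point (3) from the compatibility of $d_C$ with $\Delta_C$ together with the fact that $d_\varphi$ lowers the weight. The extra bookkeeping you flag for (2) is real but standard, and your sketch handles it adequately.
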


\begin{proof}
The first point follows from the definition of a conilpotent coalgebra and from the fact that $\Pac$ is a connected weight graded cooperad. The second point is a direct corollary of the relation $\Pac(\Delta_C) \Delta_C = \Delta_{\Pac} (C)  \Delta_C  $ in the definition of a coalgebra over a cooperad. The last point is a consequence of  the commutativity of the differential $d_C$ and the structure map $\Delta_C$.
\end{proof}

This proposition shows that the weight filtration is made up of  dg $\Pac$-subcoalgebras. Notice that any morphism $f : C \to D$ of $\Pac$-coalgebras preserves the weight filtration: $f(F_n C)\subset F_n D$.

\subsection{Filtered quasi-isomorphisms}
In this section, we refine the results of \cite[Chapter~$11$]{LodayVallette12} and of \cite[Section~$1.3$]{LefevreHasegawa03} about the 
behavior of the bar and cobar constructions with respect to 
quasi-isomorphisms. 

\begin{defi}
A \emph{filtered quasi-isomorphism} of conilpotent dg $\Pac$-coalgebras is a morphism $f : C \to D$ of dg $\Pac$-coalgebras such that the induced morphisms of chain complexes 
$$\gr_n  f : F_n C/F_{n-1} C \qi F_n D/F_{n-1} D$$ are quasi-isomorphisms, for any $n\ge 0$.
\end{defi}

\begin{prop}\label{lemm:FiltQIisWE}
The class of filtered quasi-isomorphisms of conilpotent dg $\Pacc$-coalgebras is included in the class of weak equivalences. 
\end{prop}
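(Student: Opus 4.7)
The plan is to lift the weight filtration of Proposition~\ref{prop:WeightFiltr} from $C$ to the cobar construction $\Omega_\kappa C$ and then compare the associated gradeds.

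Given a conilpotent dg $\Pacc$-coalgebra $C$, I define an increasing filtration on $\Omega_\kappa C = \PP(C)$ by declaring $F_N \Omega_\kappa C$ to be the span of tensors $p \otimes (c_1 \otimes \cdots \otimes c_k)$ in $\PP(k) \otimes_{\Sy_k} C^{\otimes k}$ such that $c_i \in F_{n_i} C$ and $n_1 + \cdots + n_k \leq N$. This filtration is exhaustive on $\Omega_\kappa C$ by Proposition~\ref{prop:WeightFiltr}(1) and bounded below, since $F_{-1} \Omega_\kappa C = 0$.

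The key technical step is to check that the cobar differential $d = d_1 + d_2$ respects this filtration, with the twisted part $d_2$ strictly lowering it by one. The internal part $d_1$, combining $d_\PP$ with the Leibniz extension of $d_C$, preserves each $F_N$ thanks to Proposition~\ref{prop:WeightFiltr}(3). The twisted part $d_2$ is, by definition, the derivation of the free $\PP$-algebra $\PP(C)$ extending the map $c \mapsto \kappa \bar\Delta_C(c)$. Proposition~\ref{prop:WeightFiltr}(2) asserts that $\bar\Delta_C(F_{n_j} C)$ lies in a sum of terms of the form ${\Pacc}^{(\omega)}$ applied to tensors whose factors have total weight $n_j - \omega$, with $\omega \geq 1$; since $\kappa$ is supported on ${\Pacc}^{(1)}$, only the contributions with $\omega = 1$ survive after post-composition, yielding factors of total weight $n_j - 1$. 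Replacing $c_j$ by such an expression in the $j$-th slot of $p \otimes (c_1 \otimes \cdots \otimes c_k)$ therefore drops the total weight of the tensor from $N$ to $N-1$, so indeed $d_2(F_N) \subset F_{N-1}$.

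It follows that the associated graded $\gr \Omega_\kappa C$ carries only the $d_1$-part of the differential and is naturally identified with the free $\PP$-algebra on the chain complex $\gr C = \bigoplus_n F_n C / F_{n-1} C$:
$$\gr \Omega_\kappa C \ \cong\  \PP(\gr C).$$
Over a field of characteristic zero, the endofunctor $\PP(-)$ on dg modules preserves quasi-isomorphisms, by the K\"unneth formula combined with the semisimplicity of $\KK[\Sy_n]$, which makes the coinvariants functor $(-)_{\Sy_n}$ exact. Hence, when $f : C \to D$ is a filtered quasi-isomorphism, $\gr f$ is a quasi-isomorphism of chain complexes, and consequently $\gr \Omega_\kappa f = \PP(\gr f)$ is a quasi-isomorphism. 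To conclude that $\Omega_\kappa f$ itself is a quasi-isomorphism, I would apply the standard filtered comparison argument: the five lemma on the long exact homology sequences of
$$0 \to F_{N-1} \Omega_\kappa X \to F_N \Omega_\kappa X \to \gr_N \Omega_\kappa X \to 0$$
for $X = C,D$, combined with an induction on $N$ starting from $F_{-1} = 0$, shows that each $F_N \Omega_\kappa f$ is a quasi-isomorphism; since filtered colimits of chain complexes commute with homology and the filtration is exhaustive, $\Omega_\kappa f$ is itself a quasi-isomorphism, which is precisely the definition of $f$ being a weak equivalence. The main obstacle is the second step: the careful verification that $d_2$ strictly decreases the weight filtration, which relies crucially on the vanishing of $\kappa$ outside weight one; once that is settled, the remainder of the argument is entirely formal homological algebra.
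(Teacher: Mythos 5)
Your proposal is correct and follows essentially the same route as the paper's proof: the same weight-induced filtration on $\Omega_\kappa C$, the same verification via Proposition~\ref{prop:WeightFiltr} that $d_1$ preserves and $d_2$ strictly lowers it, and the same identification of the associated graded with $\big(\PP(\gr\, C), \PP\circ' d_{\gr\, C}\big)$. The only cosmetic difference is that you unwind the convergence theorem for bounded-below exhaustive filtrations into its five-lemma induction, where the paper simply cites it.
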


\begin{proof}
Let $f : C \to D$ be a filtered quasi-isomorphism of conilpotent dg $\Pac$-coalgebras. We consider the following filtration on the cobar construction $\Omega_\kappa C = (\PP(C), d_1+d_2)$  induced by the weight filtration:
$$\F_n \, \Omega_\kappa C := \sum_{k\ge 1,  \atop n_1+\cdots+n_k \leq n} \PP(k)\t_{\Sy_k} (F_{n_1} C\t  \cdots \t F_{n_k} C) \ .$$
Recall from \cite[Section~$11.2$]{LodayVallette12}
that the differential of the cobar construction is made up of two terms $d_1+d_2$, where $d_1=\PP\circ' d_C$ and where $d_2$ is the unique derivation 
which extends
$$C \xrightarrow{\Delta_C} \Pac \circ C \xrightarrow{\kappa\circ \Id_C} \PP \circ C\ .$$
It is explicitly given by
\begin{align*}
\PP\circ C \xrightarrow{\Id_\PP \circ' \Delta_C} \PP \circ
(C; \Pac\circ C)\xrightarrow{\Id_\PP \circ
(\Id_C;   \kappa\circ \Id_C)} & \PP \circ
(C; \PP\circ C) 
 \cong & (\PP \circ_{(1)}\PP)(C)
\xrightarrow{\gamma_{(1)} \circ \Id_C} \PP \circ C.
\end{align*}
So, Proposition~\ref{prop:WeightFiltr} implies
$$d_1(\F_n) \subset  \F_n \quad \text{and} \quad d_2(\F_n) \subset  \F_{n-1}\ . $$
The first page of the associated spectral sequence is equal to 
$$\PP(\gr\, f) \ : \ (\PP(\gr\, C), \PP\circ' d_{\gr\, C})
\qi  (\PP(\gr\,  D), \PP\circ' d_{\gr D})\ , $$
which is a quasi-isomorphism by assumption. Since the weight filtration is exhaustive, this filtration is exhaustive. It is 
also bounded below, so we conclude by the classical  convergence theorem of spectral sequences \cite[Chapter~$11$]{MacLane95}: 
$$\Omega_\kappa f \ : \ \Omega_\kappa C  \qi \Omega_\kappa D \ . $$
\end{proof}

\begin{prop}\label{lemm:QIdonneFilteredQI}
If $f : A \qi A'$ is a quasi-isomorphism of dg $\PP$-algebras, then $B_\kappa f : \B_\kappa A \to \B_\kappa A'$ is a filtered quasi-isomorphism of conilpotent dg $\Pac$-coalgebras.
\end{prop}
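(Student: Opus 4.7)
My plan is to mimic the spectral sequence argument of Proposition~\ref{lemm:FiltQIisWE}, now applied to the bar construction in place of the cobar. Since $\B_\kappa A = \Pac(A)$ is cofree as a $\Pac$-coalgebra, the example following Proposition~\ref{prop:WeightFiltr} makes the weight filtration explicit: $F_n \B_\kappa A = \bigoplus_{\omega=0}^{n} \Pac^{(\omega)}(A)$, and similarly for $\B_\kappa A'$. The map $\B_\kappa f$ preserves this filtration, and I will identify its associated graded.

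Next, I will decompose the bar differential on $\B_\kappa A$ as $d = d_1 + d_\varphi^{\mathrm{ext}} + d_2$, where $d_1 := \Pac \circ' d_A$ extends the internal differential of $A$, where $d_\varphi^{\mathrm{ext}}$ is the coderivation extension of the internal differential $d_\varphi$ of the dg cooperad $\Pac$ (present only in the inhomogeneous case), and where $d_2$ is the twisting differential built from $\kappa : \Pac \to \PP$ together with the $\PP$-algebra structure on $A$. The key observations are that $d_1$ preserves the weight, while both $d_\varphi^{\mathrm{ext}}$ and $d_2$ strictly lower it by one: the former because $d_\varphi : q\Pac^{(\omega)} \to q\Pac^{(\omega-1)}$, and the latter because $\kappa$ factors through the weight~$1$ component of $\Pac$ and the partial decomposition coproduct of $\Pac$ is weight-preserving. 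Consequently, on the associated graded one is left with
\[ \gr_n \B_\kappa A \ \cong\ \bigl(\Pac^{(n)}(A),\ \Pac^{(n)} \circ' d_A\bigr), \]
and $\gr_n(\B_\kappa f) = \Pac^{(n)}(f)$.

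The last step is to check that $\Pac^{(n)}(f)$ is a quasi-isomorphism for each $n$. Writing $\Pac^{(n)}(A) = \bigoplus_{k \ge 1} \Pac^{(n)}(k) \otimes_{\Sy_k} A^{\otimes k}$, the K\"unneth formula (we work over a field) makes $f^{\otimes k} : A^{\otimes k} \qi A'^{\otimes k}$ a quasi-isomorphism, and Maschke's theorem (characteristic~$0$) ensures that $\KK[\Sy_k]$ is semisimple, so that tensoring over $\Sy_k$ with $\Pac^{(n)}(k)$ preserves quasi-isomorphisms. Summing over $k$ then yields a quasi-isomorphism on each associated graded piece, so $\B_\kappa f$ is indeed a filtered quasi-isomorphism.

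The main technical point to pin down is the weight-decomposition of the bar differential, specifically confirming that $d_2$ and $d_\varphi^{\mathrm{ext}}$ strictly lower the weight filtration; once this is done, the rest is a standard K\"unneth-plus-Maschke argument.
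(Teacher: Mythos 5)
Your proof is correct and follows essentially the same route as the paper: the same identification of the weight filtration on the cofree coalgebra $\B_\kappa A$, the same three-term decomposition of the bar differential with $\Id_{\Pac}\circ' d_A$ preserving the weight while $d_\varphi$ and $d_2$ lower it by one, and the same conclusion that $\gr_n \B_\kappa f = \Pac^{(n)}(f)$. The only difference is that you spell out the K\"unneth-plus-Maschke argument for why $\Pac^{(n)}(f)$ is a quasi-isomorphism, which the paper leaves implicit; that is a harmless (indeed welcome) addition.
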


\begin{proof}
Since the bar construction $\B_\kappa A$ is a quasi-free coalgebra, its weight filtration is equal to 
$F_n\,  \B_\kappa A=\bigoplus_{\omega =0}^n {\Pac}^{(\omega)}(A)$. Recall that its differential is the sum of three terms $d_\varphi \circ \Id_A + \Id_{\Pac} \circ ' d_A + d_2$, where $d_2$ is the unique coderivation which extends 
$$\Pac\circ A \xrightarrow{\kappa\circ \Id_A} \PP \circ A \xrightarrow{\gamma_A} A \ . $$
So, the coderivation $d_2$ is equal
to the composite
\begin{align*}
\Pac\circ A \xrightarrow{\Delta_{(1)}\circ \Id_A}(\Pac
\circ_{(1)} \Pac) \circ A  \xrightarrow{(\Id_{\Pac}  \circ_{(1)}
\kappa)\circ \Id_A} &\ (\Pac \circ_{(1)} \PP)\circ A \\
 \cong &\ \Pac
\circ(A; \PP\circ A) \xrightarrow{\Id_{\Pac} \circ(\Id_A; \gamma_A)}
 \Pac \circ A\ .
\end{align*}
Since the maps $\kappa$ and $d_\varphi$ lowers the weight grading by $1$, we get 
$$\Id_{\Pac} \circ ' d_A(F_n) \subset F_n, \quad d_\varphi \circ \Id_A (F_n) \subset F_{n-1}, \quad \text{and} \quad  
d_2(F_n)\subset F_{n-1}   \ . $$
Hence, the graded analogue of $\B_\kappa f$ is equal to 
$$\gr_n \, \B_\kappa f={\Pac}^{(n)}(f) : \big({\Pac}^{(n)}(A), \Id_{\Pac} \circ ' d_A\big) \qi \big({\Pac}^{(n)}(A'), \Id_{\Pac} \circ ' d_{A'}\big) \ ,   $$
which is a quasi-isomorphism for any $n\in \NN$.
\end{proof}

\begin{prop}\label{prop:weINqi}
The class of weak equivalences  of conilpotent dg $\Pacc$-coalgebras is included in the class of quasi-isomorphisms.
\end{prop}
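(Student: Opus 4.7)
The plan is to factor the claim through the bar--cobar resolution and use 2-out-of-3. The first preliminary step is to verify that any \emph{filtered} quasi-isomorphism of conilpotent dg $\Pac$-coalgebras is a quasi-isomorphism. By Proposition~\ref{prop:WeightFiltr}, the weight filtration $\{F_n C\}$ is exhaustive and bounded below ($F_{-1}C=0$), so the associated spectral sequence converges strongly. A filtered quasi-isomorphism induces a quasi-isomorphism on $E_0$, so a classical comparison argument (equivalently, an inductive five-lemma on the short exact sequences $0 \to F_{n-1}\to F_n \to F_n/F_{n-1}\to 0$ followed by the passage to the filtered colimit) forces it to be a quasi-isomorphism of the underlying chain complexes.

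Second, I would invoke the bar--cobar resolution of a conilpotent coalgebra. For a Koszul operad $\PP$, the unit $\eta_C \colon C \to \B_\kappa \Omega_\kappa C$ of the adjunction is a filtered quasi-isomorphism for every conilpotent dg $\Pac$-coalgebra $C$; this is proved in \cite{LodayVallette12} by identifying the $E^0$-page with the Koszul complex $\Pac \circ_\kappa \PP$ applied to $C$, which is acyclic in positive weight precisely because $\PP$ is Koszul. By the preliminary lemma, $\eta_C$ is in particular a quasi-isomorphism.

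Third, given a weak equivalence $f \colon C \to D$, the map $\Omega_\kappa f$ is by definition a quasi-isomorphism of dg $\PP$-algebras. By Proposition~\ref{lemm:QIdonneFilteredQI}, $\B_\kappa \Omega_\kappa f$ is a filtered quasi-isomorphism, hence a quasi-isomorphism. The naturality square
\[
\xymatrix@R=18pt@C=30pt{
C \ar[r]^-{\eta_C} \ar[d]_-{f} & \B_\kappa \Omega_\kappa C \ar[d]^-{\B_\kappa \Omega_\kappa f} \\
D \ar[r]^-{\eta_D} & \B_\kappa \Omega_\kappa D
}
\]
then contains three quasi-isomorphisms, and the 2-out-of-3 property forces $f$ itself to be a quasi-isomorphism.

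The main technical obstacle is the preliminary lemma, where one must ensure convergence of the spectral sequence in a setting that is not assumed to be bounded; the key input is that the weight filtration starts at $F_{-1}=0$ and is exhaustive, which makes the spectral sequence converge strongly to the homology of $C$. The Koszul hypothesis on $\PP$ enters only at the second step, through the acyclicity of the Koszul complex needed for the bar--cobar resolution.
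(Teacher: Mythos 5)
Your proof is correct and follows essentially the same route as the paper: compare $f$ with $\B_\kappa\Omega_\kappa f$ through the naturality square of the unit $\upsilon_\kappa$ and conclude by two-out-of-three. The only difference is in how the two auxiliary quasi-isomorphisms are justified: the paper simply cites \cite[Proposition~$11.2.3$]{LodayVallette12} for the fact that $\B_\kappa$ preserves quasi-isomorphisms and the known quasi-isomorphism $C \xrightarrow{\sim} \B_\kappa\Omega_\kappa C$, whereas you derive both from the filtered-quasi-isomorphism statements already in the paper (Proposition~\ref{lemm:QIdonneFilteredQI} and the Koszul-complex computation underlying Theorem~\ref{Thm:KoszulBarCobarRes}--$(2)$) together with a preliminary lemma that a filtered quasi-isomorphism is a quasi-isomorphism. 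That lemma is sound --- the weight filtration is exhaustive, bounded below and preserved by the differentials, so the spectral sequence (or the inductive five-lemma plus passage to the colimit) converges --- and no circularity is introduced, since the proof of Theorem~\ref{Thm:KoszulBarCobarRes}--$(2)$ does not rely on Proposition~\ref{prop:weINqi}.
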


\begin{proof}
 Let $f : C \to D$ be a weak equivalence of conilpotent dg $\Pac$-coalgebras. By definition, its image under the bar construction $\Ok f : \Ok C  \xrightarrow{\sim} \Ok D$ is a quasi-isomorphism of dg $\PP$-algebras. Since the bar construction $\Bk$ preserves quasi-isomorphisms, by \cite[Proposition~$11.2.3$]{LodayVallette12}, and since 
the counit of the  bar-cobar adjunction 
$\upsilon_\kappa C \, :\, C \xrightarrow{\sim} \B_\kappa \Omega_\kappa C$
 is a quasi-isomorphism, we conclude with the following commutative diagram 
$$\xymatrix@R=30pt@C=30pt@M=5pt{C \ar[r]^f \ar[d]^\sim_{\upsilon_\kappa C} & D \   \ar[d]^\sim_{\upsilon_\kappa D}  \\
\Bk\Ok C \ar[r]^\sim   & \Bk\Ok D\ . }$$
\end{proof}

Without any assumption on the connectivity of the underlying chain complexes, the aforementioned inclusion can be strict. Examples of quasi-isomorphisms, which is not  weak equivalences, are given 
in \cite[Section~$9.1.2$]{Hinich01} of dg cocommutative coalgebras and
in \cite[Section~$1.3.5$]{LefevreHasegawa03} of dg coassociative coalgebras.\\

The following diagram sums up the aforementioned propositions. 
$$\xymatrix@M=5pt@C=40pt@R=1pt{ 
& \boxed{\text{filtered quasi-isomorphisms}}  \ar@{^{(}->}[dd] &
\\
\boxed{\text{quasi-isomorphisms}}   \ar@/^1pc/[ru]^-{\B_\kappa} & &\\
 & \ar@/^1pc/[ul]^-{\Omega_\kappa} \boxed{\text{weak equivalences}} \ar@{^{(}->}[r]^{\neq} & 
 \boxed{\text{quasi-isomorphisms}}  \\
\\ \underline{\textsf{dg} \ \PP\textsf{-algebras}} & \underline{\textsf{dg} \  \Pac\textsf{-coalgebras}}&
 }  $$

\begin{theo}\label{Thm:KoszulBarCobarRes}
Let $\PP$ be a Koszul operad. 
\begin{enumerate}
\item   The counit of the bar-cobar adjunction $\varepsilon_\kappa \,: \,
\Omega_\kappa \B_\kappa A \xrightarrow{\sim} A $ is a
quasi-isomorphism of dg $\PP$-algebras, for every dg $\PP$-algebra $A$. 

\item 
The unit of the bar-cobar adjunction 
$\upsilon_\kappa \, :\, C \xrightarrow{\sim} \B_\kappa \Omega_\kappa C$ is a  weak equivalence of conilpotent dg $\Pac$-coalgebras, for every conilpotent dg $\Pacc$-coalgebra $C$.
\end{enumerate}
\end{theo}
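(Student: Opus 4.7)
The plan is to prove the two parts in order: first establish that the counit $\varepsilon_\kappa$ is a quasi-isomorphism by a filtration/spectral-sequence argument that reduces the claim to the acyclicity of the operadic Koszul complex $\PP \circ_\kappa \Pac$, and then deduce the second part from the first by a formal argument using the triangular identity of the adjunction.

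For Part (1), the strategy is as follows. The dg $\PP$-algebra $\Omega_\kappa\B_\kappa A$ has underlying graded object $\PP \circ \Pac \circ A$, and its differential is the sum of the internal differential on $A$, the internal (Koszul dual) differential $d_\varphi$ of $\Pac$, and the two twisting terms coming from $\kappa$, which combine into the Koszul differential on $\PP \circ_\kappa \Pac$. I would filter $\Omega_\kappa\B_\kappa A$ by the total arity (or equivalently the number of inputs from $A$), i.e.\ the subspaces spanned by trees with at most $n$ leaves in $A$. This filtration is bounded below, exhaustive, and preserved by the differential; on the associated graded, the internal differential of $A$ survives while the Koszul piece acts only on the $\PP\circ\Pac$ factor. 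Hence the $E^1$ page reads
\[
E^1 \;\cong\; H_*\!\left(\PP \circ_\kappa \Pac\right) \circ A \;\cong\; \I \circ A \;=\; A,
\]
where the isomorphism on the middle term uses that $\PP$ is Koszul, so its Koszul complex is acyclic. The classical convergence theorem for spectral sequences (see \cite[Chapter~$11$]{MacLane95}) then yields the quasi-isomorphism $\varepsilon_\kappa : \Omega_\kappa \B_\kappa A \xrightarrow{\sim} A$, after checking that the map induced on $E^1$ by $\varepsilon_\kappa$ coincides with the identity of $A$ (which is immediate from the definition of the counit in terms of the projection $\Pac \twoheadrightarrow \I$ and the structure map $\gamma_A$).

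For Part (2), no new geometric input is needed. By the triangular identity of the bar-cobar adjunction $\Omega_\kappa \dashv \B_\kappa$, the composite
\[
\Omega_\kappa C \xrightarrow{\Omega_\kappa \upsilon_\kappa C} \Omega_\kappa \B_\kappa \Omega_\kappa C \xrightarrow{\varepsilon_{\kappa, \Omega_\kappa C}} \Omega_\kappa C
\]
is the identity of $\Omega_\kappa C$. By Part~(1) applied to the dg $\PP$-algebra $A := \Omega_\kappa C$, the second arrow is a quasi-isomorphism. The two-out-of-three property therefore forces $\Omega_\kappa \upsilon_\kappa C$ to be a quasi-isomorphism as well, which by the very definition of weak equivalences in the class $\mathfrak{W}$ means that $\upsilon_\kappa C$ is a weak equivalence of conilpotent dg $\Pac$-coalgebras.

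The main obstacle is the spectral sequence computation in Part~(1): one must check that the chosen filtration is compatible with the full twisted differential, identify the $E^0$-differential with the pure Koszul differential on $\PP \circ_\kappa \Pac$ (with $A$ playing a passive role on $E^0$), and finally verify convergence. Conditions $(ql_1)$ and $(ql_2)$ together with the weight grading of $\Pac$ ensure that each piece of the differential shifts the filtration in the expected direction, as already exploited in Propositions~\ref{lemm:FiltQIisWE} and~\ref{lemm:QIdonneFilteredQI}, so this check is routine but must be made with care when the presentation of $\PP$ is inhomogeneous. Once Part~(1) is in place, Part~(2) is then a purely formal consequence.
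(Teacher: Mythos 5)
For Part~(1), note that the paper does not reprove this statement: it simply invokes \cite[Theorem~11.3.3 and Corollary~11.3.5]{LodayVallette12}. Your spectral-sequence sketch is the right underlying strategy, but the specific filtration you propose --- by the number of inputs from $A$ --- is not adequate in the generality of this paper. The bar-side twisting term of the differential (extract a weight-one piece of $\Pac$, apply $\kappa$, then compose into $A$ via $\gamma_A$) only \emph{strictly} decreases the number of $A$-leaves when the extracted generator has arity at least $2$. If the presentation has unary generators --- mixed complexes, $BV_\infty$, or the trivial presentation $\Pac=\B\PP$ of any operad with $\bar{\PP}(1)\neq 0$, all explicitly within the paper's scope --- the arity-one component of this term preserves the filtration degree and survives on the associated graded, so $E^0$ is \emph{not} $(\PP\circ_\kappa\Pac)(A)$ with the Koszul differential and $E^1$ is not $A$; in the extreme case of an operad concentrated in arity one your filtration is constant and says nothing. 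Either cite the reference as the paper does, or choose a filtration adapted to the unary/inhomogeneous situation; as written this step is a genuine gap.

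For Part~(2), your argument is correct and genuinely different from the paper's. The paper filters $\B_\kappa\Omega_\kappa C$ by the weight filtration of $C$, identifies the associated graded of $\upsilon_\kappa$ with the map $\gr\, C \to (\Pac\circ_\kappa\PP)\circ \gr\, C$, uses the acyclicity of the Koszul complex $\Pac\circ_\kappa\PP$ to conclude that $\upsilon_\kappa$ is a \emph{filtered} quasi-isomorphism, and then applies Proposition~\ref{lemm:FiltQIisWE}. Your deduction from the triangle identity $\varepsilon_{\kappa,\Omega_\kappa C}\circ\Omega_\kappa(\upsilon_{\kappa,C})=\id_{\Omega_\kappa C}$, Part~(1) applied to $\Omega_\kappa C$, and two-out-of-three is a clean, purely formal shortcut that yields exactly the stated conclusion, since weak equivalences of coalgebras are by definition created by $\Omega_\kappa$. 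What it buys is brevity and independence from any filtration; what it loses is the stronger filtered-quasi-isomorphism property, which is not part of the statement but whose proof technique the paper reuses in Lemma~\ref{lemm:AcyclicCofibration}. Of course, this part of your proof is only as solid as your Part~(1); with Part~(1) taken from the literature, it stands.
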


\begin{proof}
The first point follows from \cite[Theorem~$11.3.3$ and Corollary~$11.3.5$]{LodayVallette12}. 
For the second point, we consider the following filtration induced by the weight filtration of $C$: 
$$\F_n \, \B_\kappa \Omega_\kappa C := \sum_{k\ge 1,  \atop n_1+\cdots+n_k \leq n} (\Pac\circ \PP)(k)\t_{\Sy_k} (F_{n_1} C\t  \cdots \t F_{n_k} C) \ .$$
Since the unit of adjunction $\upsilon_\kappa$ is equal to the composite
$$ C \xrightarrow{\Delta_C} \Pac(C) \cong \Pac\circ \I \circ C \mono \Pac\circ \PP \circ C \ ,
$$
it preserves the respective filtrations by Proposition~\ref{prop:WeightFiltr}--$(2)$. The associated graded morphism is equal to 
$$\gr_n  \upsilon_\kappa : F_n C/F_{n-1} C \to \F_n \B_\kappa \Omega_\kappa C/\F_{n-1} \B_\kappa \Omega_\kappa C \ ,$$
where the right-hand side is isomorphic to $\B_\kappa \Omega_\kappa \, \gr\,  C\cong (\Pac \circ_\kappa \PP)  \circ (\gr\,  C, d_{\gr\,  C})$. Since the operad $\PP$ is Koszul, its Koszul complex is acyclic, $\Pac \circ_\kappa \PP\qi \I$, which proves that the unit $\upsilon_\kappa$ is a filtered quasi-isomorphism. 
Finally, we conclude that the unit $\upsilon_\kappa$ is a weak-equivalence by  Proposition~\ref{lemm:FiltQIisWE}. 
\end{proof}

\subsection{Fibrations and cofibrations}\label{subsec:Fib-Cofib}

Let us first recall that the coproduct $A\vee B$ of two $\Po$-algebras $(A,\gamma_A)$ and $(B, \gamma_B)$ is given by the following coequalizer 
$$\xymatrix{\Po\circ (A\oplus B; \Po(A)\oplus \Po(B))    \ar@<0.5ex>[r] \ar@<-0.5ex>[r] &   \Po(A\oplus B) \ar@{->>}[r] & A\vee B   \ , } $$
where one map is induced by the partial composition product $\Po\circ_{(1)} \Po \to \Po$ of the operad $\Po$
and where the other one is equal to 
$\Po\circ(\Id_{A\oplus B}; \gamma_A+\gamma_B)$. When $B=\Po(V)$ is a free $\Po$-algebra, the coproduct $A\vee \Po(V)$ is simply equal to the following coequalizer
$$\xymatrix{\Po\circ (A\oplus V; \Po(A))   \ar@<0.5ex>[r] \ar@<-0.5ex>[r] &   \Po(A\oplus V) \ar@{->>}[r] & A\vee \Po(V)\ . } $$

As usual, see \cite{Hovey99}, we denote by $D^n$ the acyclic chain complex 
$$ \cdots \to 0 \to \KK \xrightarrow{\sim} \KK \to 0 \to \cdots $$ 
concentrated in degrees $n$ and $n-1$. We denote by $S^n$ the chain complex 
$$ \cdots \to 0 \to \KK  \to 0 \to \cdots $$ 
concentrated in degrees $n$. The generating cofibrations of the model category of dg modules 
are the embeddings $I^n : S^{n-1} \mono D^n$ and the generating acyclic cofibrations are the quasi-isomorphisms $J^n : 0 \stackrel{\sim}{\mono} D^n$. So, in the cofibrantly generated model category of dg $\Po$-algebras, the relative $\Po(I)$-cell complexes, also known as \emph{standard cofibrations}, are the sequential colimits of pushouts of coproducts of $\Po(I)$-maps. Since we are working over a field $\KK$, such a pushout is equivalent to 
$$\xymatrix@M=5pt{\Po(s^{-1}V)\ar@{>->}[d]  \ar[r]^(0.55){\gamma_A \Po(s \alpha)}  
\ar@{}[dr] | (0.65)\pushout
& A \ar@{>->}[d] \\
\Po(V \oplus s^{-1}V) \ar[r]  & A \vee_\alpha \Po(V) \ ,    } $$
where $V$ is a graded module, $\alpha : V \to A$ is a degree $-1$ map, with image in the cycles of $A$. The dg $\Po$-algebra $A \vee_\alpha \Po(V)$ is equal to the coproduct of $\Po$-algebras $A\vee \Po(V)$ endowed with the differential given by $d_A$ and by the unique derivation which extends  the map 
$$ V \xrightarrow{\alpha} A \mono A \vee \Po(V)\ . $$
Hence a standard cofibration of dg $\Po$-algebras is a morphism of dg $\Po$-algebras $A \mono (A \vee \Po(S),d)$, where the graded module $S$ admits an exhaustive filtration 
$$S_0=\lbrace 0 \rbrace \subset S_1 \subset S_2 \subset \cdots \subset \textrm{Colim}_i \, S_i =S \ ,  $$
satisfying $d(S_i)\subset A\vee\PP(S_{i-1})$.

In the same way, a \emph{standard acyclic cofibration}, or relative $\Po(J)$-cell complex, is a morphism of dg $\Po$-algebras $A \mono A \vee \Po(M)$, where the chain complex  $M$ is a direct sum 
$M=\oplus_{i=1}^\infty M^i$
of acyclic chain complexes. Finally, any cofibration (resp. acyclic cofibration) is a retract of a standard cofibration (resp.  standard acyclic cofibration) with isomorphisms on domains. \\

Let $(A, d_A)$ be a dg $\Po$-algebra and let $(V, d_V)$ be a chain complex. Let $\alpha : V \to A$ be a degree $-1$ map such that the unique derivation on the coproduct $A\vee \Po(V)$, defined by $d_A$, $d_V$ and $\alpha$, squares to $0$. In this case, the dg $\Po$-algebra produced is still denoted by  $A\vee_\alpha \Po(V)$.

\begin{lemm}\label{lemma:attachdgmod}
The embedding $A \mono A \vee_\alpha \Po(V)$ is a standard cofibration of dg $\Po$-algebras. 
\end{lemm}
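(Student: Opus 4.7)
The plan is to apply the concrete description of standard cofibrations recalled immediately before the lemma: the embedding $A \mono A \vee_\alpha \Po(V)$ is a standard cofibration as soon as one exhibits an exhaustive filtration $V_0 = \{0\} \subset V_1 \subset V_2 \subset \cdots$ of $V$ by graded submodules satisfying $d(V_i) \subset A \vee \Po(V_{i-1})$, where $d$ denotes the total differential on $A \vee_\alpha \Po(V)$.

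Over the field $\KK$, the chain complex $(V, d_V)$ splits as a direct sum of graded submodules $V = Z \oplus W$, where $Z := \ker d_V$ and $d_V$ restricts to an isomorphism $W \xrightarrow{\cong} B := d_V(W) \subset Z$. I would then take the two-step filtration $V_0 := 0 \subset V_1 := Z \subset V_2 := V$ (with $V_i = V$ for $i \geq 2$), which is manifestly exhaustive. By construction, the derivation $d$ acts on a generator $v \in V$ by $d(v) = d_V(v) + \alpha(v)$. For $v \in V_1 = Z$, the vanishing $d_V(v) = 0$ gives $d(v) = \alpha(v) \in A = A \vee \Po(V_0)$. For $v = z + w \in V_2 = Z \oplus W$, one has $d(v) = d_V(w) + \alpha(v)$ with $d_V(w) \in B \subset V_1$ and $\alpha(v) \in A$, so $d(v) \in A \vee \Po(V_1)$. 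This verifies the filtration property and hence the conclusion.

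Equivalently, this realizes $A \mono A \vee_\alpha \Po(V)$ as a composite of two pushouts of the form displayed in the paper: the first attaches $Z$ along $\alpha|_Z \colon Z \to A$, and the second attaches $W$ along the degree $-1$ map $w \mapsto d_V(w) + \alpha(w) \colon W \to A_1 := A \vee_{\alpha|_Z} \Po(Z)$. The hypothesis required by the pushout diagram, namely that these attaching maps land in cycles, follows from the compatibility identity $d_A \alpha + \alpha d_V = 0$, which is itself equivalent to the assumption that the derivation on $A \vee_\alpha \Po(V)$ squares to zero. There is no real obstacle beyond unwinding definitions; the only substantive ingredient is the splitting $V = Z \oplus W$, which is available freely because we work over a field.
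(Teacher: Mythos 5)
Your proof is correct and follows essentially the same route as the paper: the paper also splits $V$ over the field $\KK$ into cycles plus a complement (written there as $V \cong B \oplus H \oplus sB$, so $Z = B\oplus H$ and $W = sB$ in your notation) and uses the same two-step filtration $0 \subset Z \subset V$. You merely make explicit the verification of $d(S_i)\subset A\vee\Po(S_{i-1})$ and the cycle condition, which the paper leaves to the reader.
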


\begin{proof}
Since we are working over a field $\KK$, any chain complex $V$ decomposes into 
$$V \cong B \oplus H \oplus s B \ , $$
where $d_V(B)=d_V(H)=0$ and where $d_V : sB \xrightarrow{s^{-1}} B$. It is enough to consider the following filtration to conclude 
$$S_0=\lbrace 0 \rbrace \subset S_1:=B \oplus H \subset S_2:= V\ .  $$
\end{proof}

\begin{prop}\label{prop:MonodonneCof}
Let $(C,\Delta_C)$ be a dg $\Pacc$-coalgebra and let $C'\subset C$ be a dg sub-$\Pac$-coalgebra such that $\bar \Delta_C (C)\subset \Pacc(C')$. The image  of the inclusion $C' \mono C$ under the cobar construction $\Omega_\kappa$ is a standard cofibration of dg $\Po$-algebras. 
\end{prop}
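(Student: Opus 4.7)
My plan is to realize $\Omega_\kappa C$ as a one-step quasi-free extension of $\Omega_\kappa C'$ and then invoke Lemma~\ref{lemma:attachdgmod}.

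First, since we work over a field, choose a graded module complement $V$ of $C'$ inside $C$, so that $C \cong C' \oplus V$ as graded $\KK$-modules. Because the free $\Po$-algebra functor is a left adjoint, this yields an isomorphism of graded $\Po$-algebras
\[
\Omega_\kappa C \;=\; \PP(C) \;\cong\; \PP(C') \vee \PP(V) \;=\; \Omega_\kappa C' \vee \PP(V).
\]
The next step is to write the cobar differential $d_1 + d_2$ on $\PP(C)$ in terms of this decomposition and isolate a differential $d_V$ on $V$ together with a degree $-1$ map $\alpha : V \to \Omega_\kappa C'$ that together encode it.

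For $v \in V$, the piece $d_1(v) = d_C(v)$ lies in $C = C' \oplus V$; let me set $d_V(v)$ to be its $V$-component and $\beta(v) \in C'$ its $C'$-component. For the other piece, since $\kappa$ vanishes on $\I$, we may replace $\Delta_C$ by the reduced $\bar{\Delta}_C$ in the defining formula of $d_2$; by the standing hypothesis $\bar{\Delta}_C(C) \subset \Pac(C')$, this forces $d_2(v) \in \PP(C') = \Omega_\kappa C'$. I then define
\[
\alpha(v) \;:=\; \beta(v) + d_2(v) \ \in\ \Omega_\kappa C',
\]
which is of degree $-1$ because $d_C$ and $d_2$ both are. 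Because $C'$ is a dg sub-$\Pac$-coalgebra, the restriction of $d_1 + d_2$ to $\Omega_\kappa C' = \PP(C')$ is exactly the cobar differential of $C'$. Comparing on the generating module $C = C' \oplus V$, one sees that the derivation on $\Omega_\kappa C' \vee \PP(V)$ uniquely determined by the cobar differential on $\Omega_\kappa C'$, by $d_V$ on $V$, and by $\alpha$, coincides with the cobar differential $d_1 + d_2$ on $\Omega_\kappa C$. In particular this derivation squares to zero (automatically, since it equals a square-zero differential), and in passing one deduces $d_V^2 = 0$ from the $V$-component of $d_C^2 = 0$.

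Thus we have an identification of dg $\Po$-algebras $\Omega_\kappa C \cong \Omega_\kappa C' \vee_\alpha \PP(V)$ under which the embedding $\Omega_\kappa C' \hookrightarrow \Omega_\kappa C$ becomes the canonical inclusion into a quasi-free extension of the type considered in Lemma~\ref{lemma:attachdgmod}. Applying that lemma finishes the proof. The only delicate point is organizing the splitting of $d_C$ and $d_2$ so that the generating module $V$ genuinely assembles into a chain complex with a well-defined attaching map landing in $\Omega_\kappa C'$; once the hypothesis $\bar{\Delta}_C(C) \subset \Pac(C')$ is brought in to control the $d_2$-part, everything falls into place and the identification is automatic.
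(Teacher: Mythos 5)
Your proof is correct and follows essentially the same route as the paper's: choose a graded complement $V$ (the paper's $E$) of $C'$, split the cobar differential into its internal part on $\Omega_\kappa C'$, a differential on $V$, and an attaching map $V \to \Omega_\kappa C'$ obtained from the $C'$-component of $d_C|_V$ together with $d_2|_V$ (which lands in $\PP(C')$ precisely because of the hypothesis $\bar\Delta_C(C)\subset\Pac(C')$), and then conclude by Lemma~\ref{lemma:attachdgmod}. Apart from renaming the two summands of the attaching map, this is the paper's argument.
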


\begin{proof}
Since we are working over a field $\KK$, there exists a graded sub-module $E$ of $C$ such that 
$C \cong C'\oplus E$ in the category of graded $\KK$-modules. Forgetting the differentials, the underlying $\Po$-algebra of the cobar construction of $C$ is isomorphic to 
$$\Omega_\kappa C \cong \PP (C' \oplus E)\cong \Po(C') \vee \Po(E) \ .   $$
Under the decomposition $C \cong C'\oplus E$, the differential $d_C$ of $C$ is the sum of the following three terms: 
$$d_{C'} : C' \to C', \quad d_E : E \to E, \quad \text{and} \quad \alpha : E \to C'  \ .  $$
By  assumption, the degree $-1$ map 
$$\beta : E \mono C \xrightarrow{\Delta_C} \Pac(C) \xrightarrow{\kappa(C)} \Po(C)$$ 
actually lands in $\Po(C')$. So the morphism of dg $\PP$-algebras $\Omega_\kappa C' \to \Omega_\kappa C$ is equal to the embedding $A \mono A \vee_{\alpha+\beta} \Po(E)$, where $A$ stands for the dg $\Po$-algebra $\Omega_\kappa C'$. We conclude the present proof with Lemma~\ref{lemma:attachdgmod}.
\end{proof}

\begin{theo}\label{theo:BarCobarFirCofibr}$ \ $ 

\begin{enumerate}
\item The cobar construction $\Omega_\kappa$ preserves cofibrations and weak equivalences.

\item The bar construction $\B_\kappa$ preserves fibrations and weak equivalences.
\end{enumerate}
\end{theo}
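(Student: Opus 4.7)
My plan is to prove (1) directly by a weight-filtration argument and then to deduce (2) formally from (1) via the bar-cobar adjunction.

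Part (1) splits into two halves. Preservation of weak equivalences is a tautology: by the very definition, a weak equivalence of conilpotent dg $\Pac$-coalgebras is a morphism whose image under $\Omega_\kappa$ is a quasi-isomorphism of dg $\PP$-algebras, which in Hinich's model structure of Theorem~\ref{theo:Hinich97} is precisely a weak equivalence. For the preservation of cofibrations, given a monomorphism $f:C\mono D$, I identify $C$ with the sub-dg-$\Pac$-coalgebra $C':=f(C)$ of $D$ and build the chain $D_{-1}:=C'\subset D_0\subset D_1\subset\cdots$ of sub-dg-$\Pac$-coalgebras $D_n:=C'+F_nD$, where $F_\bullet D$ is the weight filtration of Proposition~\ref{prop:WeightFiltr}. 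Each $D_n$ is closed under $\Delta_D$ (using that $C'$ is, and combining Proposition~\ref{prop:WeightFiltr}(2) for $F_nD$) and under the differential (Proposition~\ref{prop:WeightFiltr}(3)). The crucial point is that
$$\bar\Delta_D(D_n)\ \subset\ \Pac(D_{n-1})\ ,$$
because $\bar\Delta_D(C')\subset \Pac(C')\subset \Pac(D_{n-1})$, and because in the decomposition of $\bar\Delta_D(F_nD)$ furnished by Proposition~\ref{prop:WeightFiltr}(2) every inner weight satisfies $n_i\le n-\omega\le n-1$, forcing $F_{n_i}D\subset F_{n-1}D\subset D_{n-1}$. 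Proposition~\ref{prop:MonodonneCof} then applies to each inclusion $D_{n-1}\subset D_n$, turning it into a standard cofibration $\Omega_\kappa D_{n-1}\mono \Omega_\kappa D_n$ of dg $\PP$-algebras. Exhaustivity of the weight filtration gives $D=\colim_n D_n$, and, $\Omega_\kappa$ being a left adjoint, it commutes with this colimit; hence $\Omega_\kappa C'\mono \Omega_\kappa D$ is a transfinite composition of standard cofibrations, thus a cofibration. Composing with the isomorphism $\Omega_\kappa C\cong \Omega_\kappa C'$ concludes.

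Part (2) then follows almost immediately. Preservation of weak equivalences is the combination of Proposition~\ref{lemm:QIdonneFilteredQI}, which under $\B_\kappa$ sends a quasi-isomorphism of dg $\PP$-algebras to a filtered quasi-isomorphism of conilpotent dg $\Pac$-coalgebras, with Proposition~\ref{lemm:FiltQIisWE}, which asserts that filtered quasi-isomorphisms are weak equivalences. Preservation of fibrations is purely formal: given a fibration $p$ of dg $\PP$-algebras and an acyclic cofibration $i$ of conilpotent dg $\Pac$-coalgebras, the bar-cobar adjunction translates a lifting problem for $\B_\kappa p$ against $i$ into one for $p$ against $\Omega_\kappa i$; by part~(1) the morphism $\Omega_\kappa i$ is an acyclic cofibration of dg $\PP$-algebras, so the required lift exists and can be transported back through the adjunction.

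The main obstacle is the cofibration half of part~(1). Proposition~\ref{prop:MonodonneCof} only handles one ``weight step'' at a time, so one must organise an arbitrary monomorphism of $\Pac$-coalgebras into a well-behaved sequence of such steps; the weight filtration of the target $D$---precisely because its reduced coproduct strictly lowers weight---is the appropriate device for doing this, and verifying the inclusion $\bar\Delta_D(D_n)\subset\Pac(D_{n-1})$ is the technical heart of the proof.
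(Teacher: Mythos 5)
Your proposal is correct and follows essentially the same route as the paper: the same filtration $f(C)+F_\bullet D$ (up to an index shift), the same reduction of each step to Proposition~\ref{prop:MonodonneCof} via the inclusion $\bar\Delta_D(D_n)\subset\Pac(D_{n-1})$, the same colimit argument for part~(1), and the same adjunction/filtered-quasi-isomorphism arguments for part~(2). You merely spell out more explicitly than the paper why Proposition~\ref{prop:WeightFiltr}(2) forces the inner weights below $n$, which is a welcome clarification rather than a deviation.
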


\begin{proof}$ \ $

\begin{enumerate}
\item Let $f : C \mono D$ be a cofibration of conilpotent dg $\Pac$-coalgebras. For any $n\in \NN$, we consider the dg sub-$\Pac$-coalgebra of $D$ defined by 
$$D^{[n]}:=f(C)+ F_{n-1} D \ ,$$
where $F_{n-1} D$ stands for the weight filtration of the $\Pac$-coalgebra $D$. By convention, we set  $D^{[0]}:=C$. Proposition~\ref{prop:WeightFiltr}--$(2)$ implies $\bar\Delta_{D^{[n+1]}}(D^{[n+1]}) \subset \Pac(D^{[n]})$. So,  we can apply Proposition~\ref{prop:MonodonneCof} to show that the maps $\Omega_\kappa D^{[n]} \to \Omega_\kappa D^{[n+1]}$ are standard cofibrations of dg $\Po$-algebras. Finally, the map 
$\Omega_\kappa  f$ is a cofibration as a  sequential colimit of standard cofibrations.

\noindent
The cobar construction $\Omega_\kappa$ preserves weak equivalences by definition. 

\item Let $g : A \epi A'$ be a fibration of dg $\Po$-algebras. Its image $\B_\kappa\,   g$ is a fibration if and only if it satisfies the right lifting property with respect to any acyclic cofibration $f : C \stackrel{\sim}{\mono} D$. Under the bar-cobar adjunction~\ref{subsec:barcobarAlg}, this property is equivalent to the left lifting property of $\Omega_\kappa f$ with respect to $g$, which holds true by the above point $(1)$.

\noindent 
The bar construction $\B_\kappa$ sends quasi-isomorphisms of dg $\Po$-algebras to weak equivalences of dg $\Pac$-coalgebras by Propositions~\ref{lemm:FiltQIisWE} and \ref{lemm:QIdonneFilteredQI}.
\end{enumerate}
\end{proof}

\subsection{Proof of Theorem~\ref{theo:MCcoalg}--$(1)$}

\begin{proof}(of Theorem~\ref{theo:MCcoalg}--$(1)$)

\begin{itemize}
\item[$(\text{MC}\, 1)$] [\emph{Limits and Colimits}] Since we are working over a field of characteristic $0$, Proposition~$1.20$ of \cite{GetzlerJones94} applies and shows that 
the category of conilpotent dg $\Pac$-coalgebras admits finite limits and finite colimits.

\item[$(\text{MC}\, 2)$] [\emph{Two out of three}] Let $f : C \to D$ and $g : D \to E$ be two morphisms of conilpotent dg $\Pac$-coalgebras. If any two of $f$, $g$ and $gf$ are weak equivalences, then so is the third. This is a direct consequence of the definition of weak equivalences and the axiom $(\text{MC}\, 2)$ for dg $\Po$-algebras.

\item[$(\text{MC}\, 3)$] [\emph{Retracts}] Since the cofibrations $\mathfrak{C}$ are the degreewise monomorphisms, they are stable under retracts. 

\noindent
Since the image of a retract under the cobar construction $\Omega_\kappa$ is again a retract, weak equivalences $\mathfrak{W}$ of conilpotent dg $\Pac$-coalgebras are stable under retract by the axiom $(\text{MC}\, 3)$ for dg $\Po$-algebras.

\noindent
Let $f : C \epi D$ be a fibration $\mathfrak{F}$ of conilpotent dg $\Pac$-coalgebras and let $g : E \to F$ be a retract of $f$. Let $c : G \stackrel{\sim}{\mono} H$ be an acyclic cofibration fitting into the following commutative diagram. 
$$\xymatrix@M=5pt@R=30pt@C=30pt{G \ar[r]  \ar@{>->}[d]_(0.45)c^(0.45)\sim&   E \ar[r] \ar[d]_(0.45)g  &   C \ar[r]^p \ar@{->>}[d]_(0.45)f &    E\ar[d]_(0.45)g \\
H  \ar[r]  \ar@{..>}[rru]^(0.7){\alpha}  | \hole  &    F   \ar[r]&    D  \ar[r] &  F     } $$
By the lifting property, there exists a map $\alpha : H \to C$ making the first rectangle into a commutative diagram. Finally, the composite $p \alpha$ makes the first square into a commutative diagram, which proves that  the map $g$ is a fibration.

\item[$(\text{MC}\, 5)$] [\emph{Factorization}]
Let $f : C \to D$ be a morphism of conilpotent dg $\Pac$-coalgebras. The factorization axiom $(\text{MC}\, 5)$ for dg $\Po$-algebras allows us to factor $\Omega_\kappa f$ into 
$$\xymatrix@M=5pt{\Omega_\kappa C \ar[rr]^{\Omega_\kappa f} \ar@{>->}[dr]_{i} &  & \Omega_\kappa D \\
 &   A\ , \ar@{->>}[ur]_p & } $$
 where $i$ is a cofibration and $p$ a fibration and where  one of these two is a quasi-isomorphism. 
So, the morphism $\B_\kappa \Omega_\kappa f$ factors into $\B_\kappa\,   p \circ \B_\kappa \, i$. We consider the following commutative diagram in the category of conilpotent dg $\Pac$-coalgebras. 
$$\xymatrix@M=5pt@R=40pt@C=30pt{\B_\kappa \Omega_\kappa C \ar[rr]^{\B_\kappa \Omega_\kappa f} 
\ar[dr]_{\B_\kappa \, i}& &\B_\kappa \Omega_\kappa D \\
&\B_\kappa  A   \ar[ur]_{\B_\kappa \,  p}& \\
C \ar[rr]^(0.6)f | \hole  \ar[uu]^{\upsilon_\kappa C}  \ar@{..>}[dr]_{\tilde{\i}}  && D \ar[uu]_{\upsilon_\kappa D}\\
&\B_\kappa  A \times_{\B_\kappa \Omega_\kappa D} D \ar[uu] \ar@{}[ruuu] | (0.15)\pullbackbis
  \ar[ur]_{\tilde{p}}& \\}$$
By definition of the pullback, there exists a morphism $\tilde{\i} : C \to \B_\kappa  A \times_{\B_\kappa \Omega_\kappa D} D$, such that $f=\tilde{p}\,  \tilde{\i}$. We shall now prove that the two maps $\tilde{p}$ and  $\tilde{\i}$ are respectively a fibration and a cofibration. 

\noindent
First, the map $\B_\kappa\,  p$ is a fibration by Theorem~\ref{theo:BarCobarFirCofibr}--$(2)$. Since fibrations are stable under base change, the morphism $\tilde{p}$ is also a fibration. 

\noindent
As a cofibration of dg $\PP$-algebras, the map $i : \Ok C \mono A$ is a a retract of a standard cofibration, with isomorphisms on domains, and so is a monomorphism. 
The composite $(\Bk i )(\upsilon_\kappa C)$ is actually equal to the following composite $$ C \xrightarrow{\Delta_C} \Pac(C) \xrightarrow{\Pac(i_{C})} \Pac(A) \ .
$$
Since its first component on $A\cong \I(A)\subset \Pac(A)$ is equal to the restriction $i_C$ on $C$, 
 it is a monomorphism.
We conclude that the morphism $\tilde{\i}$ is a monomorphism by Lemma~\ref{lemm:AcyclicCofibration}, proved in the Appendix~\ref{app:TechLemma}. 

\noindent 
If the map $i$ (resp. $p$) is a quasi-isomorphim, then the map $\Bk \, i$ (resp. $\Bk \, p$) is a weak equivalence by 
Theorem~\ref{theo:BarCobarFirCofibr}--$(2)$. Recall that the unit of adjunction $\upsilon_\kappa$ is a weak equivalence by Theorem~\ref{Thm:KoszulBarCobarRes}--$(2)$. Assuming Lemma~\ref{lemm:AcyclicCofibration}, that is 
 $j : \B_\kappa A \times_{\B_\kappa \Omega_\kappa D} D \stackrel{\sim}{\to} \B_\kappa A$ being a weak equivalence, we conclude that the map $\tilde{\i}$ (resp. $\tilde{p}$) is a weak equivalence by the above axiom $(\text{MC}\, 2)$.

\item[$(\text{MC}\, 4)$] [\emph{Lifting property}] We consider the following commutative diagram in the category of conilpotent dg $\Pac$-colagebras 
$$\xymatrix@R=30pt@C=30pt@M=5pt{E \ar[r]  \ar@{>->}[d]_c & C\  \ar@{->>}[d]^f \\
F \ar[r] \ar@{..>}[ur]^\alpha   & D\ , }$$
where $c$ is a cofibration and where $f$ is a fibration. If moreover the map $c$ is a weak equivalence, then there exists a morphism $\alpha$ such that the two triangles commute, by the definition of the class $\mathfrak{F}$ of fibrations. 

\noindent 
Let us now prove the same lifting property when the map  $f$ is a weak equivalence. Using the aforementioned axiom $(\text{MC}\, 5)$, we factor the map $f$ into $f= \tilde{p}\, \tilde{\i}$, where 
$\tilde{\i}$ is a cofibration and $\tilde{p}$ a fibration. By the axiom $(\text{MC}\, 2)$, both maps 
$\tilde{p}$ and  $\tilde{\i}$
are weak equivalences. By the definition of fibrations, there exists a lifting $r :  \B_\kappa A \times_{\B_\kappa \Omega_\kappa D} D \to C$ in the diagram 
$$\xymatrix@R=30pt@C=30pt@M=5pt{C \ar[r]^{\id_C} \ar@{>->}[d]^\sim_{\tilde{\i}} &  C \ \ar@{->>}[d]^f \\ 
 \B_\kappa A \times_{\B_\kappa \Omega_\kappa D} D  \ar[r]_(0.65){\tilde p} \ar@{..>}[ur]^r  & D\ .} $$
It remains to find a lifting in the diagram 
$$\xymatrix@R=30pt@C=30pt@M=5pt{E \ar[r]  \ar@{>->}[d]_{c} &  \B_\kappa A \times_{\B_\kappa \Omega_\kappa D} D \ar@{->>}[d]^{\tilde p} \\ 
 F   \ar[r] \ar@{..>}[ur]  & D\ ,} $$
which, by the pullback property,  is equivalent to finding a lifting in 
$$\xymatrix@R=30pt@C=30pt@M=5pt{E \ar[r]  \ar@{>->}[d]_{c} &  
\B_\kappa A \times_{\B_\kappa \Omega_\kappa D} D \ar@{->>}[d]^{\tilde p}  \ar[r] \ar@{}[rd] | (0.25)\pullback  & \Bk A \ar[d]^{\Bk \, p} \\ 
 F   \ar[r] \ar@{..>}[urr] | (0.48)\hole   & D \ar[r]_{\upsilon_\kappa D}  &  \Bk \Ok D \ .} $$
To prove that such a lifting exists,  it is enough to consider the following dual diagram under the bar-cobar 
adjunction~\ref{subsec:barcobarAlg}.
$$\xymatrix@R=30pt@C=30pt@M=5pt{\Omega_\kappa E \ar[r]  \ar@{>->}[d]_{\Omega_\kappa  c} & A   \ar@{->>}[d]^p_\sim \\
\Ok F \ar[r] \ar@{..>}[ur]   & \Ok D }$$
Since the cobar construction preserves cofibrations, by Theorem~\ref{theo:BarCobarFirCofibr}--$(1)$, and since the map $p$ 
is an acyclic cofibration, 
we conclude 
by  the lifting axiom $(\text{MC}\, 4)$ in the model category of dg $\Po$-algebras.
\end{itemize}

\end{proof}

\subsection{Fibrant and cofibrant objects}

Since cofibrations are monomorphisms, every conilpotent dg $\Pac$-coalgebra is cofibrant. 
Let us now prove that a conilpotent dg $\Pac$-coalgebra is fibrant if and only if it is isomorphic to a 
quasi-free dg $\Pac$-coalgebra. \smallskip

\begin{proof}(of Theorem~\ref{theo:MCcoalg}--$(2)$)

Let $(C,d_C) \cong (\Pac(A), d_\mu)$ be a conilpotent dg $\Pac$-coalgebra isomorphic to a quasi-free dg $\Pac$-coalgebra. The codifferential $d_\mu$ endows $A$ with a $\PP_\infty$-algebra structure, so $C\cong \widetilde{\B}_\iota A$. 
We consider the unit 
$\upsilon \ : \ A \ {\rightsquigarrow}  \  \Omega_\kappa \widetilde{\B}_\iota A$
of the $(\Ok \widetilde{\B}_\iota, i)$-adjunction of Theorem~\ref{theo:Rectification}. Its first component 
$\upsilon_{(0)}  :  A \cong \I \circ \I \circ A \mono   \PP \circ \Pac \circ A$
 is a monomorphism. We denote by 
 $\rho_{(0)}  :  \PP \circ \Pac \circ A \epi  \I \circ \I \circ A \cong A$
 its right inverse.  We define a map $\rho : \Pac \to \End_A^{\Omega_\kappa \widetilde{\B}_\iota A}$ by the formula of \cite[Theorem~$10.4.1$]{LodayVallette12}. The proof given in loc. cit. shows that the map $\rho$ is an $\infty$-morphism, which is right inverse to $\upsilon$, i.e. $\rho \upsilon = \id_A$.
This allows us to write the conilpotent dg $\Pac$-coalgebra $C$ as a retract of its bar-cobar construction $\Bk \Ok C$:
$$\xymatrix@C=30pt{C \ar[r]^(0.4){\upsilon_\kappa C} \ar@/_1pc/[rr]_{\id}&  \Bk \Ok C \ar[r]^(0.55){\widetilde{\B}_\iota \rho} & C \ .} $$
Since the dg $\PP$-algebra $\Ok C$ is fibrant and since the bar construction $\Bk$ preserves fibrations by 
Theorem~\ref{theo:BarCobarFirCofibr}--$(2)$, then the bar-cobar construction $\Bk \Ok C$ is a fibrant conilpotent dg $\Pac$-coalgebra. We conclude with the general property that fibrant objects are stable under retract. 

In the other way round, let $C$ be a fibrant conilpotent dg $\Pac$-coalgebra. By definition of the fibrations of conilpotent dg $\Pac$-coalgebras, there exists a lifting $r$ in the following commutative diagram 
$$\xymatrix@R=30pt@C=30pt@M=5pt{C \ar@{=}[r]  \ar@{>->}[d]_{\upsilon_\kappa C}^\sim & {\  C\ } \ar@{->>}[d] \\
\Bk \Ok C  \ar[r] \ar@{..>}[ur]^r   &\  0\ , }$$
which makes $C$ into a retract of its bar-cobar construction. Since the map $r$ preserves the respective weight filtrations, 
its first component $\PP(C) \to \mathrm{Prim}\,  C$ on $F_0$ induces the following projection onto $\mathrm{Prim} C$
$$\xi : C \mono \PP(C) \to \mathrm{Prim}\,  C \ . $$ 
Let us now prove that the induced morphism of conilpotent $\Pac$-coalgebras 
$ \Xi : C \to \Pac( \mathrm{Prim}\,  C)$ is an isomorphism.
To that extend, we show, by induction on $n\in \NN$, that the graded morphism 
$$\gr_n \Xi : \gr_n C \xrightarrow{\cong} {\Pac}^{(n)}( \mathrm{Prim}\,  C)\ ,$$
associated to the weight filtration, is an isomorphism. The case $n=0$ is trivially satisfied, since $\gr_0 C= \mathrm{Prim}\,  C$. Suppose now the result true up to $n$ and let us prove it for $n+1$.

We consider the cobar construction $\Omega_\kappa C$ of the $\Pac$-coalgebra $(C, \Delta_C)$ \textit{without} its internal differential, equipped with the filtration induced by the weight filtration of $C$ : 
$$\mathscr{F}_n   \Omega_\kappa C:=\sum_{k\ge 1, \atop l+n_1+\cdots+n_k\leqslant n} F_l  \Po(k)\otimes_{\Sy_k} 
F_{n_1} C \otimes \cdots \otimes F_{n_k} C\ .
  $$
  This filtration is stable under the boundary map $d_2 : \mathscr{F}_n\Omega_\kappa C \to \mathscr{F}_n\Omega_\kappa C$. The associated chain complex 
 $$\gr_n \Omega_\kappa C\cong \sum_{k\ge 1, \atop l+n_1+\cdots+n_k= n} \gr_l  \Po(k)\otimes_{\Sy_k} 
\gr_{n_1} C \otimes \cdots \otimes \gr_{n_k} C$$
is cohomologically graded by the weight $l$ of the operad $\Po$:
\begin{multline*}
\gr_n C \stackrel{\mathrm{d}}{\longrightarrow}
 \sum_{k\ge 1, \atop n_1+\cdots+n_k= n-1}  E(k)\otimes_{\Sy_k} 
\gr_{n_1} C \otimes \cdots \otimes \gr_{n_k} C \stackrel{\mathrm{d}}{\longrightarrow} \\
\sum_{k\ge 1, \atop n_1+\cdots+n_k= n-2}  \gr_2 \Po(k)\otimes_{\Sy_k} 
\gr_{n_1} C \otimes \cdots \otimes \gr_{n_k} C \stackrel{\mathrm{d}}{\longrightarrow} \cdots \stackrel{\mathrm{d}}{\longrightarrow} 
(\gr_n \Po)(\mathrm{Prim}\,  C)\stackrel{\mathrm{d}}{\longrightarrow}0\ .
\end{multline*}
Notice that if one considers the same construction $\gr \, \Omega_\kappa \Pac(V)$ for any the cofree $\Pac$-coalgebra $\Pac(V)$, one gets a (co)chain complex isomorphic to the Koszul complex $q\PP \circ_\kappa q\Pac(V)$ of the quadratic analogous operad $q\PP$ ``with coefficients'' in $V$. Since the Koszul property for the operad $\PP$ includes the Koszul property of the quadratic analgue operad $q\PP$, this later chain complex is acyclic. Recall that it decomposes as a direct sum of sub-chain complexes labelled by the global weight, so that  it is isomorphic and thus quasi-isomorphic to $V$ in weight $0$ and its homology groups are all trivial in higher weights.

The morphism of $\Pac$-coalgebras 
$ \Xi : C \to \Pac( \mathrm{Prim}\,  C)$ induces morphisms of (co)chain complexes, which is $\gr_{n+1} \Omega_\kappa \Xi $ on weight $n+1$:
\begin{eqnarray*}
\xymatrix@C=16pt{0 \ar[r]\ar[d]^\cong& \gr_{n+1} C \ar[r]\ar[d]^{\gr_{n+1} \Xi}&\displaystyle   \sum_{k\ge 1, \atop n_1+\cdots+n_k= n}  E(k)\otimes_{\Sy_k} 
\gr_{n_1} C \otimes \cdots \otimes \gr_{n_k} C \ar[r]\ar[d]^\cong&\cdots\ar[d]^\cong \\
0 \ar[r]& {\Pac}^{(n+1)}( \mathrm{Prim}\,  C) \ar[r]& \displaystyle  \sum_{k\ge 1, \atop n_1+\cdots+n_k= n}  E(k)\otimes_{\Sy_k} 
{\Pac}^{(n_1)}( \mathrm{Prim}\,  C) \otimes \cdots \otimes {\Pac}^{(n_k)}( \mathrm{Prim}\,  C)\ar[r]&\cdots }
\end{eqnarray*}
The top (co)chain complex is acyclic since it can be written as retract of a similar one for a cofree $\Pac$-coalgebra, the bar-cobar resolution $\B_\kappa \Omega_\kappa C$. The bottom one is also acyclic. This allows to conclude that 
the map $\gr_{n+1} \Xi$ is an isomorphism. 
\end{proof}

\subsection{Quillen equivalence}

Now that Theorem~\ref{theo:MCcoalg}--$(1)$ is proved, Theorem~\ref{theo:BarCobarFirCofibr} states that the bar-cobar adjunction 
$$\Bk  \ : \    \textsf{dg} \ \PP\textsf{-alg}  \ \rightleftharpoons\  
\textsf{conil}\ \textsf{dg} \ \Pac\textsf{-coalg}
   \ : \  \Ok \ . $$
forms a Quillen functor.  Let us now prove Point~$(3)$ of Theorem~\ref{theo:MCcoalg}: the bar-cobar adjunction is a Quillen equivalence.

\begin{proof}(of Theorem~\ref{theo:MCcoalg}--$(3)$)$\ $ 

Recall that  any dg $\PP$-algebra is fibrant and that any conilpotent dg $\Pac$-coalgebra is cofibrant, in the respective model category structures considered here. Let $A$ be a dg $\PP$-algebra and let $C$ be a conilpotent dg $\Pac$-coalgebra. We consider two maps 
$$f \ : \  \Ok C \to A  \quad \text{and}  \quad    g \ : \ C \to \Bk A   \ , $$
which are sent to one another under the bar-cobar adjunction. 

If the map $f$ is a quasi-isomorphism of dg $\PP$-algebras, then the map $\Bk f$ is a filtered quasi-isomorphism of conilpotent dg $\Pac$-coalgebras by Proposition~\ref{lemm:QIdonneFilteredQI} and so a weak equivalence by Proposition~\ref{lemm:FiltQIisWE}. Since the map $g$ is equal to the following composite with the unit of adjuction
$$g\ : Ê\ C \xrightarrow{\upsilon_\kappa C} \Bk \Ok C \xrightarrow{\Bk f} \Bk A  \ , $$
then it is a weak equivalence by Theorem~\ref{Thm:KoszulBarCobarRes}--$(2)$.
 
 In the other way round, if the map $g$ is a weak equivalence of conilpotent dg $\Pac$-coalgebras, then the map $\Ok g$ is a quasi-isomorphism of dg $\PP$-algebras by definition. Since the map $f$ is equal to the following composite with the counit of adjuction
$$f\ : Ê\  \Ok C \xrightarrow{\Ok g} \Ok \Bk A  \xrightarrow{\varepsilon_\kappa A}  A \ ,$$
then it is a quasi-isomorphism by Theorem~\ref{Thm:KoszulBarCobarRes}--$(1)$.
\end{proof}

\begin{coro}\label{coro:EquivHo}
The induced adjunction
$$\mathbb{R}\Bk  \ : \   \mathsf{Ho} ( \mathsf{dg} \ \PP\textsf{-}\mathsf{alg} ) \ \rightleftharpoons\  
\mathsf{Ho}(\mathsf{conil}\ \mathsf{dg} \ \Po^{{\acc}}\textsf{-}\mathsf{coalg})
   \ : \  \mathbb{L} \Ok  $$
is an equivalence between the homotopy categories.
\end{coro}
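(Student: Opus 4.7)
The plan is to deduce this corollary as an immediate consequence of Theorem~\ref{theo:MCcoalg}--(3) via the general machinery of model categories: any Quillen equivalence induces an adjoint equivalence on the associated homotopy categories (see, e.g., Theorem~$1.3.7$ and Proposition~$1.3.13$ in Hovey~\cite{Hovey99}). Concretely, a Quillen adjunction $F \dashv G$ gives an adjunction $\mathbb{L}F \dashv \mathbb{R}G$ between the localized categories by composing with cofibrant (resp.\ fibrant) replacements, and this adjunction is an equivalence precisely when, for every cofibrant $X$ and fibrant $Y$, a morphism $FX \to Y$ is a weak equivalence if and only if its adjunct $X \to GY$ is a weak equivalence. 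This is what was just verified during the proof of Theorem~\ref{theo:MCcoalg}--(3), so nothing new is required beyond invoking the general statement.

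A pleasant simplification occurs in our setting: since by Theorem~\ref{theo:Hinich97} every dg $\PP$-algebra is fibrant, and by Theorem~\ref{theo:MCcoalg}--(2) every conilpotent dg $\Pac$-coalgebra is cofibrant, no replacement is needed on either side. The derived functors coincide on objects with the original functors, so one can take $\mathbb{R}\Bk = \Bk$ and $\mathbb{L}\Ok = \Ok$ as functors between the underlying categories and merely pass to the localization. The unit $\upsilon_\kappa C : C \to \Bk \Ok C$ and the counit $\varepsilon_\kappa A : \Ok \Bk A \to A$ of the derived adjunction are then precisely the ones appearing in Theorem~\ref{Thm:KoszulBarCobarRes}, where they are shown to be a weak equivalence and a quasi-isomorphism respectively; hence both become isomorphisms in the respective homotopy categories.

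There is no real obstacle here: the hard work has already been done in establishing the Quillen equivalence itself, together with Theorem~\ref{Thm:KoszulBarCobarRes} that identifies the unit and counit as weak equivalences. The proof is therefore essentially a one-line invocation of the general equivalence-of-homotopy-categories theorem for Quillen equivalences, followed by the observation that the derived unit and counit are, in our case, literally the maps $\upsilon_\kappa$ and $\varepsilon_\kappa$, both of which were already shown to be weak equivalences.
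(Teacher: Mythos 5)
Your proof is correct and follows the same route the paper (implicitly) takes: the corollary is an immediate consequence of the Quillen equivalence of Theorem~\ref{theo:MCcoalg}--(3) via the standard fact that a Quillen equivalence induces an adjoint equivalence of homotopy categories, with the added observation that no fibrant/cofibrant replacements are needed since all algebras are fibrant and all coalgebras are cofibrant, so the derived unit and counit are literally $\upsilon_\kappa$ and $\varepsilon_\kappa$ from Theorem~\ref{Thm:KoszulBarCobarRes}. Nothing is missing.
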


%%%%%%%%%%  Bousfield localization %%%%%%%%%%%%%%%%%%%%

\subsection{Comparison between model category structures on coalgebras}

In order to understand the homotopy theory of conilpotent dg $\Po^{\ac}$-coalgebras with respect to quasi-isomorphisms, one can endow them with a model category structure. 

\begin{theo}[\cite{HessShipley14}]
The category of bounded below dg $\Po^{\ac}$-coalgebras with respectively quasi-iso\-mor\-phi\-sms, degree wise monomorphisms and the induced fibrations forms a model category. 
\end{theo}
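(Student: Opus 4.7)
The plan is to deduce this theorem from Theorem~\ref{theo:MCcoalg} by showing that, under the bounded-below hypothesis, the class of quasi-isomorphisms coincides with the class of weak equivalences considered in the body of the paper, and then arguing that the already-established model structure restricts to the full subcategory of bounded-below conilpotent dg $\Po^{\ac}$-coalgebras.

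The key input is the following strengthening of Proposition~\ref{lemm:QIdonneFilteredQI} in the bounded-below setting: if $f\colon C\to D$ is a quasi-isomorphism of bounded-below conilpotent dg $\Po^{\ac}$-coalgebras, then $\Omega_\kappa f$ is a quasi-isomorphism of dg $\Po$-algebras. To prove this, I would equip $\Omega_\kappa C$ and $\Omega_\kappa D$ with the filtration
\[
\mathscr{F}_n \Omega_\kappa C := \sum_{k\ge 1,\, n_1+\cdots+n_k\le n} \Po(k)\otimes_{\Sy_k}\bigl(F_{n_1}C\otimes\cdots\otimes F_{n_k}C\bigr),
\]
already introduced in the proof of Proposition~\ref{lemm:FiltQIisWE}, and invoke the spectral sequence it induces. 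As in that proof, the $E^0$-differential is $\Po\circ' d_C$, so the first page of the associated spectral sequence is $\Po(\gr f)$, which is a quasi-isomorphism because the free $\Po$-algebra functor preserves quasi-isomorphisms of bounded-below chain complexes in characteristic zero (a K\"unneth-plus-invariants argument using the bounded-below hypothesis and $\mathrm{char}\, \KK=0$). The bounded-below hypothesis is also exactly what is needed to guarantee convergence of this spectral sequence, via the classical convergence theorem. Combined with Proposition~\ref{prop:weINqi}, which already shows that weak equivalences are quasi-isomorphisms, this proves that the two classes coincide on the bounded-below subcategory.

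Once the coincidence of weak equivalences and quasi-isomorphisms on this subcategory is established, the remaining work is to restrict the model structure of Theorem~\ref{theo:MCcoalg} to bounded-below coalgebras. One must verify: (i) the subcategory admits the necessary finite limits and finite colimits, which is immediate since these are computed degreewise on underlying chain complexes; (ii) the three distinguished classes are closed under retracts within the subcategory, which is automatic; and (iii) the factorizations of axiom (MC5) can be performed inside the subcategory.

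The main obstacle, and the step deserving the most care, is the last one: the factorizations in the proof of Theorem~\ref{theo:MCcoalg}--$(1)$ proceed by pulling back the factorizations in dg $\Po$-algebras through the bar construction, and then taking an appropriate fibered product with $\Bk \Omega_\kappa D$. Since $\Bk\Omega_\kappa A$ is a cofree $\Po^{\ac}$-coalgebra on a bounded-below chain complex, hence bounded-below in each fixed weight, and since the fibered product is computed degreewise, one must check that the fibered product $\Bk A\times_{\Bk \Omega_\kappa D} D$ remains bounded below, which follows from the bounded-below character of $D$ and of the fibers. The definition of fibrations as those maps satisfying the right lifting property with respect to acyclic monomorphisms \emph{within the subcategory} then matches the ambient definition, so lifting axiom (MC4) is inherited. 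This completes the verification of the model category axioms for the bounded-below subcategory.
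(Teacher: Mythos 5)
First, note that the paper does not prove this statement at all: it is quoted from \cite{HessShipley14}, and Hess--Shipley establish it by left-inducing a model structure along the adjunction with chain complexes (verifying an acyclicity condition), not by comparison with the weak-equivalence model structure of Theorem~\ref{theo:MCcoalg}. So your proposal has to stand on its own, and its central step does not hold up. You claim that for a quasi-isomorphism $f\colon C\to D$ of bounded-below conilpotent dg $\Po^{\ac}$-coalgebras, the first page $\Po(\gr f)$ of the weight-filtration spectral sequence on $\Omega_\kappa$ is a quasi-isomorphism ``because the free $\Po$-algebra functor preserves quasi-isomorphisms.'' But for that to apply you would need $\gr f\colon F_nC/F_{n-1}C \to F_nD/F_{n-1}D$ to be a quasi-isomorphism for each $n$ --- i.e.\ you would need $f$ to be a \emph{filtered} quasi-isomorphism, which is precisely the stronger hypothesis of Proposition~\ref{lemm:FiltQIisWE} and is not implied by $f$ being a quasi-isomorphism. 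The weight filtration is independent of the homological grading, so boundedness below gives no control over it: for instance, take $C=\KK x\oplus \KK y\oplus \KK z$ with $|x|=|z|=0$, $|y|=1$, $dy=x$, $\bar\Delta z=x\otimes x$, and $D=\KK z'$ trivial; the evident map $C\to D$ is a quasi-isomorphism of non-negatively graded conilpotent coalgebras, yet $\gr_0 C=\KK x\oplus\KK y$ is acyclic while $\gr_0 D=\KK z'$ is not. (The clause about $\Po$ preserving quasi-isomorphisms of \emph{bounded-below} complexes is also a red herring: over a field of characteristic $0$ the operadic K\"unneth formula needs no boundedness; the problem lies entirely with $\gr f$.) The convergence remark is likewise misplaced --- the weight filtration is exhaustive and bounded below as a filtration regardless of the grading of $C$, which is why Proposition~\ref{lemm:FiltQIisWE} needs no connectivity hypothesis.

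More structurally, the strategy of proving that quasi-isomorphisms coincide with weak equivalences on bounded-below coalgebras runs against the grain of the paper itself: Proposition~\ref{prop:Bousfield} and its corollary present the quasi-isomorphism model structure as a genuine left Bousfield localization of that of Theorem~\ref{theo:MCcoalg}, with strictly more weak equivalences and hence different fibrant objects (quasi-free coalgebras that are in addition local). If your identification of the two classes were correct, that localization would be the identity and the corollary would be vacuous. The classical results in this direction (Quillen, Lef\`evre-Hasegawa) that do convert connectivity into ``$\Omega$ preserves quasi-isomorphisms'' use the word-length filtration on the cobar construction, whose $E^1$-page is the free algebra on $H(C)$ and whose convergence is where the connectivity hypothesis genuinely enters --- and even there one needs more than bounded below (typically $2$-connectedness). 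Your subsequent restriction-of-model-structure argument is therefore built on a foundation that is not established, and I would not expect it to be repairable along these lines; the correct route is the left-induction argument of \cite{HessShipley14}.
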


\begin{remark}
There is a rich literature on model category structures for coalgebras with respect to quasi-isomorphisms, starting from the original work of Quillen \cite{Quillen67, Quillen69}. Getzler--Goerss treated the case of non-negatively graded but not necessarily conilpotent coassociative coalgebras in \cite{GetzlerGoerss99}. Aubry--Chataur covered the case of conilpotent coalgebras over a quasi-cofree cooperad in \cite{AubryChataur03}.  J.R. Smith worked out in \cite{Smith11} the  case of  coalgebras over operads satisfying a certain condition (Condition~$4.3$ in loc. cit.) and with chain homotopy equivalences.  
\end{remark}

The model category structure with quasi-isomorphisms can be obtained from the present model category structure  by means of Bousfield localization. 

\begin{prop}\label{prop:Bousfield}
The model category structure on conilpotent dg $\PP^{\ac}$-coalgebras with quasi-isomorphisms 
is the left Bousfield localization of the   model category structure of Theorem~\ref{theo:MCcoalg} 
with respect to the class of quasi-isomorphisms.
\end{prop}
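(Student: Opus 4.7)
The plan is to verify that the Hess--Shipley model structure $\mathcal{M}'$ (with quasi-isomorphisms as weak equivalences) satisfies the universal property of the left Bousfield localization of the model structure $\mathcal{M}$ of Theorem~\ref{theo:MCcoalg} at the class $S$ of quasi-isomorphisms. The verification proceeds in three steps. First, by inspecting the definitions, both model structures share the same class of cofibrations, namely the degreewise monomorphisms, so the generating cofibrations agree. Second, by Proposition~\ref{prop:weINqi}, every weak equivalence of $\mathcal{M}$ is a quasi-isomorphism, hence a weak equivalence of $\mathcal{M}'$. Combined with the first step, this shows that the identity functor $\mathcal{M} \to \mathcal{M}'$ is a left Quillen functor which sends every morphism in $S$ to a weak equivalence (trivially, since $S$ itself is the class of weak equivalences of $\mathcal{M}'$).

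Third, I would invoke the universal property of left Bousfield localization to factor the identity as $\mathcal{M} \to L_S\mathcal{M} \to \mathcal{M}'$, and then argue that the comparison $L_S\mathcal{M} \to \mathcal{M}'$ is an isomorphism of model structures. Since both sides have the same cofibrations, this reduces to matching the weak equivalences. On the one hand, every quasi-isomorphism lies in $S$, hence is tautologically an $S$-local equivalence, giving $\mathcal{W}_{\mathcal{M}'}\subset \mathcal{W}_{L_S\mathcal{M}}$. For the reverse inclusion, I would test an $S$-local equivalence $f$ against fibrant objects of $L_S\mathcal{M}$ (which are in particular fibrant in $\mathcal{M}$, hence quasi-free by Theorem~\ref{theo:MCcoalg}--$(2)$) using the simplicial mapping spaces from Theorem~\ref{thm:inftycat}; then Theorem~\ref{InverseInftyQI} and the rectification Theorem~\ref{theo:Rectification} let one translate the derived-mapping-space condition into the statement that $f$ becomes an isomorphism in the derived category, i.e.\ a quasi-isomorphism.

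The main obstacle is set-theoretic: left Bousfield localization is classically defined only for a set of maps, whereas the class of quasi-isomorphisms is a proper class. I would address this by producing a set $S_0 \subset S$ of quasi-isomorphisms between sufficiently small coalgebras that generates all quasi-isomorphisms up to the two-out-of-three property and filtered colimits of weak equivalences, using that the model structure of Theorem~\ref{theo:MCcoalg} is cofibrantly generated, left proper, and (after a cardinality argument) combinatorial. Applying Smith's theorem or Hirschhorn's existence results \cite{Hirschhorn03} to $L_{S_0}\mathcal{M}$ then yields a localization whose weak equivalences coincide with the quasi-isomorphisms by the argument of the previous paragraph, completing the proof.
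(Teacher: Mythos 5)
Your first two steps (identical cofibrations; Proposition~\ref{prop:weINqi} giving that the identity is left Quillen into the quasi-isomorphism structure) match the paper's opening moves. But your third step goes astray: you set out to \emph{construct} $L_S\mathcal{M}$ via the set-based existence theorems (Smith/Hirschhorn), which forces you into the set-theoretic detour of finding a generating set $S_0$ and arguing combinatoriality. That reduction is left entirely unproved --- producing a set of quasi-isomorphisms whose local equivalences are exactly all quasi-isomorphisms is precisely the hard content of such existence proofs (it is what \cite{HessShipley14} and its predecessors actually do), and the paper never establishes local presentability of conilpotent dg $\Pacc$-coalgebras. Moreover the whole construction is unnecessary for this proposition: the candidate localized model structure already \emph{exists} (it is the quasi-isomorphism structure quoted from \cite{HessShipley14} just above), and Hirschhorn's definition of left Bousfield localization makes sense for an arbitrary \emph{class} of maps. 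So the task is only to verify that this existing structure satisfies the definition, i.e.\ that its cofibrations agree with those of Theorem~\ref{theo:MCcoalg} (clear) and that its weak equivalences coincide with the class of $S$-local equivalences.

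This is what the paper does: it first checks that the identity Quillen pair $\textsf{coalg}_\textsf{we}\rightleftharpoons\textsf{coalg}_\textsf{qi}$ satisfies the universal property of a left \emph{localization} with respect to quasi-isomorphisms --- the key input being that every object is cofibrant, so Theorem~$3.1.6$--$(1)$--$(b)$ of \cite{Hirshhorn03} upgrades ``$\mathbb{L}F$ inverts quasi-isomorphisms'' to ``$F$ sends quasi-isomorphisms to weak equivalences'' --- and then deduces from Theorem~$3.1.6$--$(1)$--$(d)$ that every $S$-local equivalence is a quasi-isomorphism, the converse inclusion being formal. Your proposed substitute for this last step (testing against fibrant objects using the simplicial enrichment of Theorem~\ref{thm:inftycat}, then invoking Theorems~\ref{InverseInftyQI} and~\ref{theo:Rectification}) also misapplies Theorem~\ref{thm:inftycat}, which concerns the Dwyer--Kan localization of $\infty$\textsf{-}$\PP_\infty$\textsf{-alg}, not homotopy function complexes in the coalgebra model category. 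The correct tool is Hirschhorn's homotopy function complexes together with the cofibrancy of all objects.
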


\begin{proof}
In this proof, we will denote by $\textsf{coalg}_\textsf{we}$ the category of conilpotent dg $\Po^{\ac}$-coalgebras equipped with the model category structure of Theorem~\ref{theo:MCcoalg} and we will denote 
by $\textsf{coalg}_\textsf{qi}$ the same underlying category but  equipped with the model category structure with quasi-isomorphisms. 

We first prove that the model category $\textsf{coalg}_\textsf{qi}$ is the localization of the model category $\textsf{coalg}_\textsf{we}$ with respect to the class of quasi-isomorphisms. Since the class of weak equivalences sits inside the class of quasi-isomorphisms (Proposition~\ref{prop:weINqi}) and since the cofibrations are the same in both model categories, the identity functor $\id : \textsf{coalg}_\textsf{we} \to \textsf{coalg}_\textsf{qi}$ is a left Quillen functor. (Its right adjoint is the identity too.) We now show that the identity satisfies the universal property of begin a unital object, see Definition~$3.1.1$ of \cite{Hirshhorn03}. Let 
$F  : \textsf{coalg}_\textsf{we} \leftrightharpoons \mathsf{C} : G$
 be a Quillen adjunction such that the total left derived functor $\mathbb{L}F$ sends quasi-isomorphisms into isomorphisms in the homotopy category $\mathrm{Ho}(\mathsf{C})$. Obviously, there is  a unique way to factor this adjunction by the identity adjunction: 
 $\widetilde{F}  : \textsf{coalg}_\textsf{qi} \leftrightharpoons \mathsf{C} : \widetilde{G}$. 
 Since every object in $\textsf{coalg}_\textsf{we}$ is cofibrant, Theorem~$3.1.6$--$(1)$--$(b)$ of \cite{Hirshhorn03} shows that the functor $F$ sends quasi-isomorphisms of coalgebras into weak equivalences in $\mathsf{C}$. 
Therefore, the functor $\widetilde{F}$ is a left Quillen functor. 

We now prove that this localisation of model categories is a Bousfield localization. For this, it is enough to prove that the class of quasi-isomorphisms of coalgebras is equal to the class of local equivalences with respect to quasi-isomorphisms. By definition, the former is  included into the latter. The inclusion in the other way round is provided by 
Theorem~$3.1.6$--$(1)$--$(d)$ of \cite{Hirshhorn03} applied to the identity Quillen adjunction $\id : \textsf{coalg}_\textsf{we} \to \textsf{coalg}_\textsf{qi}$. 
\end{proof}

From the present study and the Bousfield localization, we can obtain a more precise description of the model category with quasi-isomorphisms of \cite{HessShipley14}.

\begin{coro}
In the model category of conilpotent dg $\Po^{\ac}$-coalgebras with quasi-isomorphisms,  the class of acyclic fibrations is the same  as the class of acyclic fibrations in the present model category. Its fibrant objects are the dg $\Pacc$-coalgebras isomorphic to 
quasi-free ones which are local with respect to quasi-isomorphisms. 
\end{coro}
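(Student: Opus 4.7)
The plan is to derive both assertions from the general formalism of left Bousfield localization, which was established in Proposition~\ref{prop:Bousfield}. Write $\mathfrak{C}_{\mathsf{we}}, \mathfrak{W}_{\mathsf{we}}, \mathfrak{F}_{\mathsf{we}}$ for the three classes of maps of the model category structure of Theorem~\ref{theo:MCcoalg}, and $\mathfrak{C}_{\mathsf{qi}}, \mathfrak{W}_{\mathsf{qi}}, \mathfrak{F}_{\mathsf{qi}}$ for those of the model category with quasi-isomorphisms. By construction of a left Bousfield localization, one has $\mathfrak{C}_{\mathsf{qi}} = \mathfrak{C}_{\mathsf{we}}$ (degreewise monomorphisms) and $\mathfrak{W}_{\mathsf{we}} \subset \mathfrak{W}_{\mathsf{qi}}$.

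The first assertion follows immediately from the standard characterization of acyclic fibrations in a model category by the right lifting property against the class of cofibrations. Since this class is unchanged by the localization, so is the class of acyclic fibrations:
\[
\mathfrak{F}_{\mathsf{qi}} \cap \mathfrak{W}_{\mathsf{qi}} \ = \ \mathfrak{C}_{\mathsf{qi}}^{\boxtimes} \ = \ \mathfrak{C}_{\mathsf{we}}^{\boxtimes} \ = \ \mathfrak{F}_{\mathsf{we}} \cap \mathfrak{W}_{\mathsf{we}}\ ,
\]
where $\boxtimes$ denotes ``maps having the right lifting property against''.

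For the second assertion, I would invoke the general theorem on left Bousfield localizations (see e.g. Theorem~3.4.1 of \cite{Hirshhorn03}) which identifies the fibrant objects of the localized model category as exactly those objects which are fibrant in the original model category and moreover local with respect to the class of maps being inverted. In our setting, being local with respect to quasi-isomorphisms is the required condition, and by Theorem~\ref{theo:MCcoalg}--$(2)$ the fibrant objects of the unlocalized model category are exactly the quasi-free dg $\Pac$-coalgebras. Combining these two statements yields the description of the fibrant objects claimed in the corollary.

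The only subtle point, which would require a small verification, is to make sure that the abstract Bousfield localization machinery indeed applies in this non-additive and potentially non-combinatorial setting; but since Proposition~\ref{prop:Bousfield} has already established that the model structure with quasi-isomorphisms is a left Bousfield localization of the one of Theorem~\ref{theo:MCcoalg}, both results on acyclic fibrations and fibrant objects are formal consequences. The main conceptual content is thus packaged in Proposition~\ref{prop:Bousfield}, and the present corollary is a direct unwinding of its meaning.
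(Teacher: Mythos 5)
Your proposal is correct and follows essentially the same route as the paper: both points are deduced from Proposition~\ref{prop:Bousfield} together with the general theory of left Bousfield localizations (your lifting-property argument for acyclic fibrations is precisely the content of the Hirschhorn result the paper cites, and your identification of fibrant objects is the paper's appeal to Proposition~$3.4.1$ of \cite{Hirshhorn03}). The only detail worth making explicit is that the latter result requires left properness of the original model structure, which holds here since every object is cofibrant (Theorem~\ref{theo:MCcoalg}--$(2)$).
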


\begin{proof}
The first point is a direct corollary of Proposition~\ref{prop:Bousfield} and Proposition~$3.3.3$--$(1)$--$(b)$ of 
\cite{Hirshhorn03}.
The second point follows from the left properness of the present model category (Theorem~\ref{theo:MCcoalg}--$(2)$), Proposition~\ref{prop:Bousfield} and Proposition~$3.4.1$--$(1)$ of \cite{Hirshhorn03}.
\end{proof}

We refer the reader to Proposition~\ref{prop:InftyqiWE} for a complete description of acyclic fibration between quasi-free $\Po^{\ac}$-coalgebras. For more elaborate results and a full comparison between the possible model category structures on conilpotent dg $\Po^{\ac}$-coalgebras, we refer the reader to the recent preprint of Drummond-Cole--Hirsh  \cite{DrummondColeHirsh14}.

%%%%%%%%%% Main Results %%%%%%%%%%%%%%%%%%%%

\section{Homotopy theory of  infinity-morphisms}
The purpose of this section is to apply the previous model category structure on conilpotent dg $\PP^{\ac}$-coalgebras to get general results about $\infty$-morphisms.
For instance, the model category structure provides us automatically with a good notion of homotopy equivalence between morphisms of fibrant-cofibrant objects, that is a homotopy equivalence between $\infty$-morphisms of $\PP_\infty$-algebras. 
In this section, we realize this homotopy equivalence with a functorial cylinder object. 
We also show that this new simple homotopy equivalence is equivalent to ``all'' the equivalence relations that have been considered so far on $\infty$-morphisms in the literature. 
Then, we state and prove one of the main results of this paper: the homotopy category of $\PP$-algebras is equivalent to the category of $\PP$-algebras with $\infty$-morphisms up to homotopy equivalence. 
Finally, we explain how  the present model category does not only encaptures this first level homotopical data, but 
all the higher homotopical properties of $\PP_\infty$-algebras. This is achieved by upgrading the category of $\PP_\infty$-algebras with $\infty$-morphism into an $\infty$-category. 

%%%%%%%%%%  Universal Cylinder Object %%%%%%%%%%%%%%%%%%%%

\subsection{Functorial cylinder objects}
To define functorial cylinder objets in the category of dg $\PP^{\ac}$-coalgebras, we consider two algebraic models for the interval, the first one in the category of dg coassociative coalgebras and the second one in the category of dg commutative algebras. 

\begin{defi}[Coassociative model for the interval] We consider the cellular chain complex of the interval: 
$$I:=\KK \mathfrak{0}\oplus \KK \mathfrak{1} \oplus \KK \mathfrak{i}, \ \text{with}\ |\mathfrak{0}|=|\mathfrak{1}|=0, \ |\mathfrak{i}|=1, \  \text{and} \ \  d(\mathfrak{0})=d(\mathfrak{1})=0, \  d(\mathfrak{i})=\mathfrak{1}-\mathfrak{0} \ . $$
It is equipped with a dg coassociative coalgebra structure by 
$$\Delta(\mathfrak{0})=\mathfrak{0}\otimes \mathfrak{0},\  \Delta(\mathfrak{1})=\mathfrak{1}\otimes \mathfrak{1}, \ \text{and}\ \Delta(\mathfrak{i})=\mathfrak{0}\otimes \mathfrak{i} + \mathfrak{i} \otimes \mathfrak{1}\ . $$
\end{defi}

When $\PP$ is a nonsymmetric operad, the tensor product of any conilpotent dg $\PP^{\ac}$-coalgebra $(C, d_C, \Delta_C)$ with $I$ provides us with a functorial conilpotent dg $\PP^{\ac}$-coalgebra. The arity $n$ component of its structure map is given by 
$$ C\otimes I \xrightarrow{\Delta_C(n)\otimes \Delta^{n-1}}  \PP^{\ac}(n)\otimes C^{\otimes n}\otimes I^{\otimes n}
\xrightarrow{\cong} \PP^{\ac}(n)\otimes ( C\otimes I)^{\otimes n}\ .
$$ 

\begin{prop}
Let $\PP$ be a nonsymmetric Koszul operad and let $C$ be a conilpotent dg  $\Pacc$-coalgebra. The $\Pacc$-coalgebra  $C\otimes I $ provides us with a functorial good cylinder object 
$$ 
\xymatrix@C=25pt{
C \oplus C \ar@{>->}[r] &  C\otimes I \ar[r]^(0.58)\sim &  C }
$$
in the model category of conilpotent dg $\Pacc$-coalgebras of Theorem~\ref{theo:MCcoalg}. 
\end{prop}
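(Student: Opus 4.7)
The plan is to exhibit the two factoring maps explicitly and then check the three structural properties (coalgebra morphisms, cofibration, weak equivalence) in turn. The two maps realizing the factorization of the codiagonal $C\oplus C \to C$ are the pair of insertions
\[
i_0,\, i_1 \, : \, C \to C\otimes I, \quad i_\epsilon(c):=c\otimes \mathfrak{\epsilon},
\]
coming from the two group-like elements of $I$, together with the projection
\[
p\, : \, C\otimes I \xrightarrow{\mathrm{id}_C\otimes \varepsilon} C\otimes \KK \cong C,
\]
where $\varepsilon: I\to \KK$ is the counit sending $\mathfrak{0},\mathfrak{1}\mapsto 1$ and $\mathfrak{i}\mapsto 0$. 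Because $I$ is a dg coassociative coalgebra and the tensor product in the nonsymmetric setting uses only iterated coproducts, the arity~$n$ structure map on $C\otimes I$ recalled just before the statement is well-defined; the maps $i_0$, $i_1$ and $p$ then come from coalgebra morphisms $\KK\to I$ and $I\to \KK$ and so are morphisms of conilpotent dg $\PP^{\ac}$-coalgebras. Functoriality in $C$ is built into the construction.

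The first map $C\oplus C \rightarrowtail C\otimes I$, being $i_0+i_1$, is a degreewise monomorphism since $\mathfrak{0}$ and $\mathfrak{1}$ are linearly independent in $I$; thus it belongs to the cofibration class $\mathfrak{C}$ by the definition of cofibrations in Theorem~\ref{theo:MCcoalg}. The only nontrivial point is therefore to establish that $p: C\otimes I \to C$ is a weak equivalence, and here I would avoid going through the cobar construction directly and instead invoke Proposition~\ref{lemm:FiltQIisWE}: it suffices to show that $p$ is a filtered quasi-isomorphism for the weight filtration.

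The key computation is to identify the weight filtration on $C\otimes I$. Since $\mathfrak{0}$ and $\mathfrak{1}$ are group-like and since $\Delta^{n-1}(\mathfrak{i})=\sum_{k=0}^{n-1}\mathfrak{0}^{\otimes k}\otimes\mathfrak{i}\otimes\mathfrak{1}^{\otimes(n-1-k)}$ keeps the $C$-tensor pattern unchanged, a direct inspection of the defining formula
\[
F_n(C\otimes I)=\bigl\{x\in C\otimes I\ \big|\ \Delta_{C\otimes I}(x)\in \textstyle\bigoplus_{\omega=0}^{n}{\Pac}^{(\omega)}(C\otimes I)\bigr\}
\]
gives the identification $F_n(C\otimes I)=F_nC\otimes I$. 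Since the differential on $C\otimes I$ is $d_C\otimes\mathrm{id}+\mathrm{id}\otimes d_I$, passing to the associated graded yields an isomorphism of chain complexes $\gr_n(C\otimes I)\cong \gr_n(C)\otimes I$, and under this identification $\gr_n(p)$ becomes $\mathrm{id}_{\gr_n C}\otimes \varepsilon$. The counit $\varepsilon:I\to \KK$ is a quasi-isomorphism of chain complexes (as $\mathfrak{1}-\mathfrak{0}=d(\mathfrak{i})$ and $H_0(I)=\KK[\mathfrak{0}]$), so each $\gr_n(p)$ is a quasi-isomorphism by the Künneth formula over a field. Hence $p$ is a filtered quasi-isomorphism, which concludes the argument thanks to Proposition~\ref{lemm:FiltQIisWE}.

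The anticipated main obstacle is the stability of the weight filtration under the coalgebra structure of $C\otimes I$, i.e.\ the verification of the identity $F_n(C\otimes I)=F_nC\otimes I$; all other steps (morphism check, monomorphism, functoriality, and the collapse of $\gr_n p$ to the acyclic comparison $\mathrm{id}\otimes \varepsilon$) are then routine consequences of the explicit shape of the coproducts on $\mathfrak{0}$, $\mathfrak{1}$, $\mathfrak{i}$.
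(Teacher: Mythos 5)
Your proposal is correct and follows essentially the same route as the paper: identify $F_n(C\otimes I)\cong F_nC\otimes I$, observe that $i_0+i_1$ is a degreewise monomorphism hence a cofibration, and show the projection is a filtered quasi-isomorphism (hence a weak equivalence by Proposition~\ref{lemm:FiltQIisWE}) by passing to the associated graded. The only detail the paper records that you leave implicit is that $C\otimes I$ is again conilpotent, which follows at once from your filtration identity.
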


\begin{proof}
 One can notice that the weight filtration satisfies $F_n(C\otimes I)\cong F_n C \otimes I$. Since the $\Pacc$-coalgebra $C$ is conilpotent, then so is $C\otimes I$. The left-hand map is the embedding $c+c'\mapsto  c\otimes\mathfrak{0} + 
 c'\otimes\mathfrak{1}$, hence it is a cofibration. 
The right-hand map is equal to 
$c\otimes\mathfrak{0} \mapsto c$, $c\otimes\mathfrak{1} \mapsto c$, and $c\otimes\mathfrak{i} \mapsto 0$. To prove that it is a weak-equivalence, we show that it is a filtered quasi-isomorphism.  The graded part of the above morphism of dg $\PP^{\ac}$-coalgebras is equal to 
$$ F_n C / F_{n-1} C \otimes  I \to  F_n C / F_{n-1} C\ ,$$
with the same kind of formula. So, this is a quasi-isomorphism and we conclude with Proposition~\ref{lemm:FiltQIisWE}. 
\end{proof}

In the symmetric case, the situation is more involved since it is difficult to find a suitable model for the interval in the category of dg cocommutative coalgebras equipped with two different group-like elements. 
Instead, we proceed as follows. 

\begin{defi}[Commutative model for the interval] \leavevmode
Let $\Lambda(t, dt):= \KK[t] \oplus \KK[t]dt$ be the dg commutative algebra made up of the polynomial differential forms on the interval. The  degree is defined by 
$|t|=0$, $|dt|=-1$ and the differential is the unique derivation extending $\dd(t)=dt$
 and $\dd(dt)=0$. 
This \emph{dg commutative algebra model for the interval} 
$(\Lambda(t, dt), \dd)$ 
is called the \textit{Sullivan algebra}. 
\end{defi}

Let $\PP$ be a Koszul operad and let $(A, \mu)$ be a $\PP_\infty$-algebra. The tensor product $A\otimes \Lambda(t, dt)$ inherits a natural $\PP_\infty$-algebra, given by 
$$\widetilde{\mu} \ : \ \PP^{\ac} \cong \PP^{\ac} \otimes Com
\xrightarrow{\mu\otimes \nu} {\End}_A \otimes {\End}_{\Lambda(t,dt)}\cong 
{\End}_{A\otimes\Lambda(t,dt)}
\ ,$$
where $Com$ denotes the operad of commutative algebras, whose arity-wise components are  one-di\-men\-sion\-al, and where $\nu$ denotes the commutative algebra structure on $\Lambda(t, dt)$. 
We consider the cellular chain complex of the interval 
$I^*$, which is isomorphic to the sub-complex of $\Lambda(t, dt)$ made up of 
$J:=\KK 1 \oplus \KK t \oplus \KK dt$ under the identification $\mathfrak{0}^*+\mathfrak{1}^*\leftrightarrow 1$, 
$\mathfrak{1}^*\leftrightarrow t$, and $\mathfrak{i}^*\leftrightarrow dt$. This latter chain complex is a deformation retract of the polynomial differential forms on the interval; a particularly elegant contraction was given by J. Dupont  in his proof of the de Rham theorem \cite{Dupont76}, see also \cite{ChengGetzler08, Getzler09}. 

\begin{defi}[Dupont's contraction]
The Dupont's contraction amounts to the following deformation retract: 
\begin{eqnarray*}
&\xymatrix{     *{ \quad \ \  \quad\quad\ \  \  (\Lambda(t, dt), \dd)\ } \ar@(dl,ul)[]^{\textrm{h}}\ \ar@<0.5ex>[r]^(0.6){\pp} & *{\
(J,\dd)\quad \ \  \ \quad }  \ar@<0.5ex>[l]^(0.4){\ii}}&\\
& \hh(t^kdt):=\frac{t^{k+1}-t}{k+1}, \ \hh(t^k):=0 \quad \text{and} \quad \pp(t^k dt):=\frac{1}{k+1}dt, \ \pp(1):=1, \ \pp(t^k)=t \ \  \text{for} \ \ k\ge 1
\ .
\end{eqnarray*}
\end{defi} 
We now consider the  contraction $\id_A \otimes h$ on $A\otimes \Lambda(t,dt)$ and then the induced $\PP_\infty$-algebra structure $\hat{\mu}$ on $A\otimes J$ obtained by applying the Homotopy Transfer Theorem \cite[Theorem~$ 10.3.1$]{LodayVallette12}, see also \cite[Section~$8$]{DotsenkoShadrinVallette15} for more insight.  
This  transferred $\PP_\infty$-algebra structure satisfies the following properties. 

\begin{lemm}\label{lem:HTTProperty1}
The $\PP_\infty$-algebra structure $\hat{\mu}$ on $A\otimes \Lambda(t,dt)$ satisfies 
\begin{eqnarray*}
\hat{\mu}(p; a_1\otimes b_1, \ldots, a_n\otimes b_n) &=& {\mu}(p; a_1, \ldots, a_n)\otimes \pp(b_1\cdots b_n) \ ,
\end{eqnarray*}
for any $p\in \PP^{\ac}(n)$,  $a_1, \ldots, a_n \in A$ when $b_1, \ldots, b_n =1\ \text{or} \ t$ and 
\begin{eqnarray*}
\hat{\mu}(p; a_1\otimes b_1, \ldots, a_i\otimes dt, \ldots, a_n\otimes 1) \in A \otimes \KK dt \ , 
\end{eqnarray*}
for any $p\in \PP^{\ac}(n)$, $a_1, \ldots, a_n \in A$, and $b_1, \ldots, b_{i-1}, b_{i+1}, \ldots, b_n\in J$. 
\end{lemm}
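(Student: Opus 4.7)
My plan is to apply the explicit tree formula supplied by the Homotopy Transfer Theorem (Theorem~$10.3.1$ of \cite{LodayVallette12}) and reduce both identities to a pair of elementary properties of Dupont's contraction.

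Recall that the HTT writes $\hat{\mu}(p;-)$ as a sum, indexed by reduced planar rooted trees $T$ with $n$ leaves, of operations $\hat{\mu}_T$; in $\hat{\mu}_T$ the leaves are fed with the inputs, every internal vertex carries the corresponding piece of $\widetilde{\mu}$, every internal edge is labelled by $\id_A \otimes \hh$, and the root is capped by $\id_A \otimes \pp$. Since $\widetilde{\mu}$ factors as $\mu \otimes \nu$ with $\nu$ the commutative product on $\Lambda(t,dt)$, and since $\hh$ and $\pp$ act only on the $\Lambda$-factor, the first tensor factor of each $\hat{\mu}_T$ assembles exactly as in the tree composition of $\mu$, while the entire non-trivial computation takes place on the $\Lambda$-side. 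The two properties of Dupont's contraction I will exploit are: (i) $\hh(t^k)=0$ for every $k\ge 0$, so $\hh$ annihilates $\KK[t]$; and (ii) for every $k\ge 0$ the polynomial $\hh(t^k dt)=(t^{k+1}-t)/(k+1)$ vanishes at $t=0$ and $t=1$, while $\pp(Q)=Q(0)+(Q(1)-Q(0))\,t$ for $Q\in\KK[t]$, so $\pp$ vanishes on any polynomial vanishing at both endpoints.

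For the first identity, all the $b_j$ lie in $\KK[t]$, so the $\Lambda$-factor at every internal vertex is a polynomial in $t$. By (i) the homotopy $\hh$ annihilates the output of every non-root internal vertex, so every tree with at least one internal edge contributes zero; only the corolla survives, yielding exactly $\mu(p; a_1,\ldots,a_n) \otimes \pp(b_1 \cdots b_n)$.

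For the second identity, I treat the case $b_i=dt$ with the other $b_j\in\{1,t\}$ (the general case is analogous). In any non-corolla tree $T$, any subtree not containing the $dt$-leaf has its top $\Lambda$-factor in $\KK[t]$ and is therefore killed by $\hh$ according to (i); this forces the whole contribution of $T$ to vanish unless the $dt$-leaf lies at the bottom of a chain of internal vertices reaching all the way to the root. In that situation, the bottom spine vertex produces a $\Lambda$-factor $P(t)\,dt$; if this vertex is not the root, then by (ii) $\hh(P(t)\,dt)$ is a polynomial vanishing at $t\in\{0,1\}$, this vanishing is preserved by subsequent multiplication by polynomials as one moves up the tree, and hence $\pp$ of the root's $\Lambda$-factor is zero, again by (ii). Consequently only the corolla survives, contributing $\mu(p; a_1,\ldots,a_n) \otimes \pp(t^E dt) = \mu(p; a_1,\ldots,a_n) \otimes \frac{dt}{E+1} \in A\otimes\KK\,dt$. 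The point requiring the most care---and the one I expect to be the main obstacle---is the propagation of the vanishing-at-endpoints property up the tree, which is precisely what guarantees that all non-corolla contributions cancel in the second formula.
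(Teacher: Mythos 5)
Your proof is correct and follows essentially the same route as the paper: expand $\hat{\mu}$ via the explicit tree-wise HTT formula, observe that $\hh$ annihilates $\KK[t]$ so only the corolla survives when all $b_j\in\KK[t]$, and for the second identity note that any non-corolla term produces at the root a polynomial factor of the form $P(t)(t^{k+1}-t)$, which vanishes at $t=0,1$ and is therefore killed by $\pp$ (the paper states this as $\pp\big(t^m(t^{k+1}-t)\big)=0$, which is the same observation as your endpoint-vanishing criterion).
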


\begin{proof}[Proof of Lemma~\ref{lem:HTTProperty1}]
The formula for the Homotopy Transfer Theorem given in \cite[Theorem~$10.3.3$]{LodayVallette12} is a sum of terms where one has always to apply the homotopy $\hh$ to some of the $b_i$'s, except for the term 
${\mu}(p; a_1, \ldots, a_n)\otimes \pp(b_1\cdots b_n)$. Since $\hh$ applied to $t^k$ gives $0$, when all the $b_i$'s are equal to $1$ or $t$, only remains the last term. 

When one $b_i$ is equal to $dt$, the last term is of the form ${\mu}(p; a_1, \ldots, a_n)\otimes P(t)dt$, with $P(t)\in \KK[t]$. Each other term involves applying at least one homotopy $\hh$ above the root vertex. Therefore the upshot is of the form $a \otimes \pp\big(\sum_{l=1}^L P_l(t)(t^{k_l+1} - t)\big)$, with $a\in A$ and $P_l(t)\in \KK[t]$. Since $\pp\big(t^m(t^{k+1}-t)\big)=0$, all these terms vanish and the second formula is proved. 
\end{proof}

Finally, this produces the required cylinder for quasi-free dg $\Pac$-coalgebras.

\begin{prop}\label{prop:Cylinder}
Let $\PP$ be a Koszul operad and let $(\PP^{\ac}(A), d_\mu)$ be a quasi-free dg $\Pac$-coalgebras. The $\Pacc$-coalgebra  $(\PP^{\ac} (A\otimes J), d_{\bar{\mu}})$ provides us with a functorial good cylinder object 
$$ 
\xymatrix@C=25pt{
\PP^{\ac}(A) \oplus \PP^{\ac}(A) \ar@{>->}[r] & \PP^{\ac} (A\otimes J) \ar[r]^(0.58)\sim &  \PP^{\ac}(A)}
$$
in the model category of conilpotent dg $\Pacc$-coalgebras of Theorem~\ref{theo:MCcoalg}.  
\end{prop}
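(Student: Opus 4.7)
My plan is to realize the cylinder data as the bar construction of three simultaneously strict $\PP_\infty$-morphisms. This reduces every verification to elementary identities involving the Dupont projection $\pp$.

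I define
\[
\iota_0(a) := a\otimes 1, \qquad \iota_1(a) := a\otimes t, \qquad \tau(a\otimes b) := a\cdot \mathrm{ev}_1(b),
\]
where $\mathrm{ev}_1 \colon J \to \KK$ sends $1\mapsto 1$, $t\mapsto 1$, and $dt\mapsto 0$. Using Lemma~\ref{lem:HTTProperty1} together with $\pp(1^n) = 1$ and $\pp(t^n) = t$ for $n \ge 1$, I check that $\iota_0$ and $\iota_1$ are strict $\PP_\infty$-morphisms; using both formulas of the lemma, I check that $\tau$ is also strict (the $dt$-valued outputs of $\hat{\mu}$ are killed on both sides). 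Since $\mathrm{ev}_1(1) = \mathrm{ev}_1(t) = 1$, the strict compositions $\tau \circ \iota_0$ and $\tau \circ \iota_1$ both equal $\id_A$. Applying $\widetilde{\B}_\iota$ and using that the coproduct of conilpotent dg $\Pac$-coalgebras coincides with the direct sum yields coalgebra maps
\[
\sigma \colon \Pac(A)\oplus \Pac(A) \to \Pac(A\otimes J), \qquad \tilde{\tau} \colon \Pac(A\otimes J) \to \Pac(A),
\]
and the functoriality relation $\widetilde{\B}_\iota(\tau) \circ \widetilde{\B}_\iota(\iota_i) = \widetilde{\B}_\iota(\tau \circ \iota_i) = \widetilde{\B}_\iota(\id_A) = \id$ immediately gives $\tilde{\tau} \circ \sigma = \mathrm{fold}$.

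For the cofibration, I note that at arity $n$ the image of $\widetilde{\B}_\iota(\iota_0)$ lies in $\Pac(n)\otimes_{\Sy_n}(A\otimes \KK\cdot 1)^{\otimes n}$ while the image of $\widetilde{\B}_\iota(\iota_1)$ lies in $\Pac(n)\otimes_{\Sy_n}(A\otimes \KK\cdot t)^{\otimes n}$. Under the decomposition of $(A\otimes J)^{\otimes n}$ according to the $\{1, t, dt\}$-content of each tensor factor, these are distinct summands, hence linearly disjoint, and $\sigma$ is a degreewise monomorphism. For the weak equivalence, the underlying chain map $\tau$ is a quasi-isomorphism: $\mathrm{ev}_1 \colon J \to \KK$ is one (both sides have homology $\KK$ generated by the class of $1$), and tensoring with $A$ preserves quasi-isomorphisms. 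Adapting the weight-filtration argument of Proposition~\ref{lemm:QIdonneFilteredQI} to $\widetilde{\B}_\iota$ shows that $\tilde{\tau}$ is a filtered quasi-isomorphism, which is a weak equivalence by Proposition~\ref{lemm:FiltQIisWE}. Functoriality in $(A,\mu)$ is automatic from the functoriality of the bar construction and the homotopy transfer theorem.

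The main subtle point is the choice of morphisms so that all three are simultaneously strict. The ``symmetric'' choice $\iota_0(a) = a\otimes(1-t)$, $\iota_1(a) = a\otimes t$, dual to the group-like elements $\mathfrak{0}, \mathfrak{1} \in I$ of the nonsymmetric case, would force the projection to send $a\otimes 1 \mapsto 2a$, which fails strictness in arities $\geq 2$ and would require obstruction theory to correct with higher $\infty$-morphism components. A direct calculation shows $\pp\bigl((1+ct)^n\bigr) = 1+ct$ holds for all $n \ge 1$ if and only if $c \in \{0, -1\}$, which is precisely what singles out the strict choices $\iota_0, \iota_1$ used above.
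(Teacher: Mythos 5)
Your proof is correct and follows essentially the same route as the paper: the same three maps $a\mapsto a\otimes 1$, $a\mapsto a\otimes t$, and the projection killing $dt$ and identifying $1,t$; the same use of Lemma~\ref{lem:HTTProperty1} to verify compatibility with the transferred structure (the paper phrases this as commutativity of coderivation diagrams rather than strictness of $\PP_\infty$-morphisms, but it is the same computation); and the same degreewise-monomorphism and filtered-quasi-isomorphism arguments for the two structure maps. Your closing observation on why $\{1,t\}$ rather than the symmetric choice $\{1-t,t\}$ must be used is a nice addition not made explicit in the paper, but the core argument is identical.
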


\begin{proof}
For the left-hand map, we consider the embedding 
$i_0+i_1\ : a+b \mapsto 
a\otimes 1 + b\otimes t$, which extends to the following unique morphism of $\PP^{\ac}$-coalgebras:
$$\PP^{\ac}(i_0)+\PP^{\ac}(i_1)\ : \ p(a_1, \ldots, a_n)+q(b_1, \ldots, b_n) \mapsto 
p(a_1\otimes 1, \ldots, a_n\otimes 1)+q(b_1\otimes t, \ldots, b_n\otimes t)\ , 
 $$
for $p, q \in \PP^{\ac}(n)$ and for $a_1, \ldots, a_n, b_1, \ldots, b_n \in A$. 
It commutes with the respective differentials since the following diagram, and the other similar one with $i_1$, are commutative 
$$\xymatrix@C=52pt{\PP^{\ac}(A) \ar[rrr]^{\PP^{\ac}(i_0)} \ar[dd]^{d_\mu} \ar[dr]^{\Delta_{(1)}(A)}& & & \PP^{\ac}(A\otimes \KK 1)\ar[dd]^{d_{\hat{\mu}}}\ar[dl]_{\Delta_{(1)}(A\otimes \KK 1)\ \ \ }\\
& \PP^{\ac}\big(\PP^{\ac}(A); A\big) \ar[dl]^(0.4){\PP^{\ac}(\mu_A; A)} \ar[r]^(0.4){\PP^{\ac}\big(\PP^{\ac}(i_0); i_0\big)}& \PP^{\ac}\big(\PP^{\ac}(A\otimes \KK 1); A\otimes \KK 1\big) \ar[dr]_(0.4){\PP^{\ac}(\hat{\mu}_{A\otimes \KK 1}; A\otimes \KK 1)\ \ }& \\
\PP^{\ac}(A)\ar[rrr]^{\PP^{\ac}(i_0)} & & & \PP^{\ac}(A\otimes \KK 1)} $$
by Lemma~\ref{lem:HTTProperty1}, where $\mu_A$ (respectively $\hat{\mu}_{A \otimes \KK 1}$) denotes the map $\PP^{\ac}(A) \to A$ induced from $\mu : \PP^{\ac} \to \End_A$. 
It is clearly a degreewise monomorphism, and so a cofibration. 

The second map $\PP^{\ac} (A\otimes J)\to \PP^{\ac}(A)$ is the unique morphism $\PP^{\ac}(j)$ of $\PP^{\ac}$-coalgebras which extends the map $j : A\otimes J \to A$, defined by  $j(a\otimes 1)=j(a\otimes t)=a$ and $j(a\otimes dt)=0$. It 
 commutes with the respective differentials  since the following diagram is commutative 
$$\xymatrix@C=52pt{\PP^{\ac}(A\otimes J) \ar[rrr]^{\PP^{\ac}( j)} \ar[dd]^{d_{\hat{\mu}}} \ar[dr]^{\Delta_{(1)}(A\otimes J)}& & & \PP^{\ac}(A)\ar[dd]^{d_{{\mu}}}\ar[dl]_{\Delta_{(1)}(A)\ \ \ }\\
& \PP^{\ac}\big(\PP^{\ac}(A\otimes J); A\otimes J\big) \ar[dl]^(0.4){\ \ \PP^{\ac}(\hat{\mu}_{A\otimes J}; A\otimes J)} \ar[r]^(0.57){\PP^{\ac}\big(\PP^{\ac}(j);j\big)}& \PP^{\ac}\big(\PP^{\ac}(A); A\big) \ar[dr]_(0.4){\PP^{\ac}({\mu}_{A}; A)\ \ }& \\
\PP^{\ac}(A\otimes J)\ar[rrr]^{\PP^{\ac}(j)} & & & \PP^{\ac}(A)} $$
by Lemma~\ref{lem:HTTProperty1}.
Notice the map  $j : A\otimes J \to A$ is a quasi-isomorphism. We consider the canonical weight filtrations on the quasi-free dg $\PP^{\ac}$-coalgebras $\PP^{\ac}(A\otimes J)$ and $\PP^{\ac}(A)$. They induce a filtered quasi-isomorphism and thus a weak equivalence by Proposition~\ref{lemm:FiltQIisWE}. 

The data  a morphism $F: (\PP^{\ac}(A), d_\mu) \to (\PP^{\ac}(B), d_\omega)$ between two quasi-free dg $\Pac$-coalgebras is equivalent to the data of an $\infty$-morphism $f: (A, \mu) \rightsquigarrow (B, \omega)$ between the two associated $\PP_\infty$-algebras. The Homotopy Transfer Theorem \cite[Theorem~$10.3.3$]{LodayVallette12} produces the following natural $\infty$-morphisms
$$\xymatrix@C=50pt{(A\otimes \Lambda(t,dt), \widetilde{\mu})  \ar@{~>}[d]^{f\otimes \nu}& \ar@{~>}[l]_(0.45){(A\otimes \ii)_\infty} (A\otimes J, \hat{\mu}) \ar@{~>}[d]\\
(B\otimes \Lambda(t,dt), \widetilde{\omega})  \ar@{~>}[r]^(0.55){(B\otimes \pp)_\infty}& (B\otimes J, \hat{\omega})}\ ,$$
 whose composition gives the functoriality of the present cylinder object. 
\end{proof}

\subsection{Homotopy equivalence of $\infty$-morphisms}
The ultimate goal of these two sections, is to provide the present theory with a suitable explicit notion of homotopy equivalence of $\infty$-morphisms, which allows us to obtain a simple description of the homotopy category of $\PP$-algebras, for instance. 

\begin{defi}[Homotopy relation]
Two morphisms $f,g : C \to D$ of conilpotent dg  $\Pacc$-coalgebras are \emph{homotopic} if there exists 
a morphism $h$ of conilpotent dg  $\Pacc$-coalgebras fulling the commutative diagram 
$$
\xymatrix@C=25pt{
C  \ar[r]^(0.4){j_0}\ar[dr]_{f}& \mathrm{Cyl(C)} \ar@{..>}[d]^{h} & \ar[l]_(0.4){j_1} \ar[dl]^{g}C \\
& \ D\ , &
}
 $$
 where $\mathrm{Cyl}(C)$ is a cylinder for $C$. 
\end{defi}

\begin{prop}
This homotopy relation of morphisms of conilpotent dg  $\Pacc$-coalgebras is an equivalence relation. 
\end{prop}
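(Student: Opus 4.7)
The strategy is to reduce the claim to the general fact from Quillen's theory that, in any model category, left homotopy is an equivalence relation on $\Hom(C, D)$ as soon as the source $C$ is cofibrant (see \cite[Corollary~1.2.6]{Hovey99}). Since Theorem~\ref{theo:MCcoalg}--(2) shows that every conilpotent dg $\Pac$-coalgebra is cofibrant in the model structure just established, the result follows at once. For self-containedness, I would then give an explicit verification of reflexivity, symmetry and transitivity using the functorial cylinder objects constructed in Proposition~\ref{prop:Cylinder} and its coassociative analogue. Reflexivity is immediate: for any $f: C \to D$, the composite $h := f \circ p$ with the weak equivalence $p: \mathrm{Cyl}(C) \to C$ satisfies $h \circ j_0 = h \circ j_1 = f$, since $p \circ j_0 = p \circ j_1 = \id_C$.

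For symmetry, I would construct an involution $\tau$ of $\mathrm{Cyl}(C)$ swapping $j_0$ and $j_1$. In the coassociative case this is induced by the coalgebra involution of $I$ exchanging $\mathfrak{0}$ and $\mathfrak{1}$; in the symmetric case, by $t \mapsto 1-t$, $dt \mapsto -dt$ on $\Lambda(t,dt)$, which preserves $J$ and commutes with Dupont's contraction data $(\ii, \pp, \hh)$, hence with the $\PP_\infty$-structure transferred onto $A \otimes J$ by the Homotopy Transfer Theorem. Consequently $\tau$ lifts to an automorphism of the cylinder $\PP^{\ac}(A \otimes J)$, and any homotopy $h$ from $f$ to $g$ yields a homotopy $h \circ \tau$ from $g$ to $f$.

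For transitivity, given homotopies $h_1, h_2: \mathrm{Cyl}(C) \to D$ witnessing $f \sim g$ and $g \sim k$, I would form the pushout
$$\widetilde{\mathrm{Cyl}}(C) := \mathrm{Cyl}(C) \sqcup_C \mathrm{Cyl}(C),$$
glued along $j_1$ in the first factor and $j_0$ in the second. The universal property produces a map $\tilde{h}: \widetilde{\mathrm{Cyl}}(C) \to D$ with boundary values $f$ and $k$. To identify $\widetilde{\mathrm{Cyl}}(C)$ as again a cylinder for $C$, it suffices to observe that each insertion $j_0, j_1: C \to \mathrm{Cyl}(C)$ is an acyclic cofibration (a cofibration since degreewise injective in either explicit model, a weak equivalence by $2$-out-of-$3$ applied to $p \circ j_i = \id_C$); the general model-categorical stability of acyclic cofibrations under pushout then shows that $\mathrm{Cyl}(C) \to \widetilde{\mathrm{Cyl}}(C)$ is again an acyclic cofibration, and a final $2$-out-of-$3$ argument gives that the induced fold $\widetilde{\mathrm{Cyl}}(C) \to C$ is a weak equivalence, as required.

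The main subtle point I anticipate is the symmetry step for the Sullivan cylinder, where one must verify that the involution $t \mapsto 1-t$ of $J$ is compatible with Dupont's contraction and hence with the transferred $\PP_\infty$-structure; this amounts to an elementary but slightly fiddly computation with the explicit formulas for $\ii, \pp, \hh$, ultimately reducing to the change of variables $s \mapsto 1-s$ on the integrals that compute $\pp(t^k\, dt)$.
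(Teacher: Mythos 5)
Your opening reduction is exactly the paper's proof: the statement follows immediately from the general model-category fact that left homotopy is an equivalence relation on maps out of a cofibrant object (the paper cites \cite[Proposition~$1.2.5$]{Hovey99}), combined with Theorem~\ref{theo:MCcoalg}--$(2)$ asserting that every object is cofibrant. The subsequent explicit verification via involutions and glued cylinders is not needed and is not in the paper, but it appears sound.
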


\begin{proof}
This a direct consequence of the general theory of model categories \cite[Proposition~$1.2.5$]{Hovey99} since every object of the present category is cofibrant by Theorem~\ref{theo:MCcoalg}--$(2)$.
\end{proof}

This homotopy relation restricts naturally to maps between fibrant-cofibrant objects, that is to $\infty$-morphisms of $\PP_\infty$-algebras. In this case, we can use the small cylinder given in Proposition~\ref{prop:Cylinder} for instance. 
In the recent paper \cite{DotsenkoPoncin12}, V. Dotsenko and N. Poncin study several equivalence relations for $\infty$-morphisms and they prove that they are all equivalent. The next proposition shows that these equivalence relations are actually homotopy equivalences in the sense of the present model category of conilpotent dg  $\Pac$-coalgebras.

\begin{prop}
All the equivalence relations for $\infty$-morphisms of \cite{DotsenkoPoncin12} are equivalent to the above homotopy relation. 
\end{prop}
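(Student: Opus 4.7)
The strategy relies on the main result of \cite{DotsenkoPoncin12}, which asserts that the various equivalence relations on $\infty$-morphisms considered there are mutually equivalent. It therefore suffices to match the present homotopy relation, given by the cylinder $\PP^{\ac}(A \otimes J)$ of Proposition~\ref{prop:Cylinder}, with any one of them. I would choose the concordance relation: $f, g : A \rightsquigarrow B$ are concordant if there exists an $\infty$-morphism $H : A \rightsquigarrow B \otimes \Lambda(t, dt)$ whose evaluations at $t = 0$ and $t = 1$ recover $f$ and $g$ respectively.

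To translate between the two notions, I would reuse the Dupont contraction that already appeared in the proof of Proposition~\ref{prop:Cylinder}. Given a cylinder homotopy $h : A \otimes J \rightsquigarrow B$ with the prescribed boundary, precomposition with the natural $\infty$-quasi-isomorphism $(A \otimes \pp)_\infty : A \otimes \Lambda(t, dt) \rightsquigarrow A \otimes J$ produces an $\infty$-morphism $A \otimes \Lambda(t,dt) \rightsquigarrow B$, and, conversely, precomposition with $(A \otimes \ii)_\infty$ recovers a cylinder homotopy from any such $\infty$-morphism. These two operations are mutually inverse up to $\infty$-homotopy since the two $\PP_\infty$-algebras $A \otimes J$ and $A \otimes \Lambda(t,dt)$ are $\infty$-quasi-isomorphic via Dupont's contraction. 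The final step is then to pass between an $\infty$-morphism $A \otimes \Lambda(t, dt) \rightsquigarrow B$ and a concordance $A \rightsquigarrow B \otimes \Lambda(t, dt)$; this is achieved by transposing the $\Lambda(t, dt)$-coefficients across the tensor product, using that $\Lambda(t, dt)$ is a commutative dg algebra, so that tensoring with it commutes with every $\PP_\infty$-operation and an $\infty$-morphism with $\Lambda(t,dt)$-coefficients on the source corresponds canonically to one with $\Lambda(t,dt)$-coefficients on the target.

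The main obstacle is to verify that these translations preserve the boundary data \emph{on the nose}, i.e.\ that the evaluations at $t = 0, 1$ really do yield the prescribed $f$ and $g$. This relies crucially on Lemma~\ref{lem:HTTProperty1}, which guarantees that the transferred $\PP_\infty$-structure $\hat{\mu}$ on $A \otimes J$ restricts at the two endpoints to the original structure on $A$, and that the Dupont $\infty$-morphisms $(X\otimes \ii)_\infty$ and $(X\otimes \pp)_\infty$ degenerate there into the evident inclusions and projections. Once this strict compatibility at $t=0$ and $t=1$ is established, the bijection between cylinder homotopies and concordances follows by functoriality, and the proposition is then immediate from \cite{DotsenkoPoncin12}.
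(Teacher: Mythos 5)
Your overall plan — reduce to the concordance relation of \cite{DotsenkoPoncin12} and let their theorem do the rest — is the same as the paper's, but the bridge you build to concordance has a genuine gap. A cylinder homotopy lives on the source, i.e.\ it is (after your Dupont translation) an $\infty$-morphism $A\otimes \Lambda(t,dt) \rightsquigarrow B$, given by maps $(A\otimes \Lambda(t,dt))^{\otimes n}\to B$; a concordance lives on the target, i.e.\ it is an $\infty$-morphism $A \rightsquigarrow B\otimes \Lambda(t,dt)$, given by maps $A^{\otimes n}\to B\otimes \Lambda(t,dt)$. Your final step, ``transposing the $\Lambda(t,dt)$-coefficients across the tensor product,'' asserts a canonical correspondence between these two kinds of data, but no such correspondence exists: already for $n=1$ one has $\Hom(A\otimes \Lambda, B)\cong \Hom(A,\Hom(\Lambda,B))$, which is not $\Hom(A, B\otimes \Lambda)$ (and $\Lambda(t,dt)$ is infinite-dimensional, so even a duality trick is unavailable). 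Commutativity of $\Lambda(t,dt)$ lets you put a $\PP_\infty$-structure on either tensor product, but it does not let you move the coefficients from one side of an $\infty$-morphism to the other. What you are really trying to prove by hand is that left homotopy (cylinder on the source) agrees with right homotopy (path object on the target), and that is not established by an explicit bijection of individual homotopies.

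The paper's route avoids this entirely: it checks that
$$\PP^{\ac}(A)\ \xrightarrow{\ \sim\ }\ \PP^{\ac}\bigl(A\otimes \Lambda(t,dt)\bigr)\ \twoheadrightarrow\ \PP^{\ac}(A\oplus A)$$
is a \emph{good path object} in the model category of Theorem~\ref{theo:MCcoalg} (the right-hand term is the categorical product, the first map is a filtered quasi-isomorphism hence a weak equivalence, the second is a fibration by the characterization of fibrations between quasi-free coalgebras), and then invokes the general model-category fact that on fibrant--cofibrant objects the right homotopy relation defined by \emph{any} good path object coincides with the left homotopy relation defined by \emph{any} good cylinder. The right homotopy relation for this particular path object is precisely concordance, and \cite{DotsenkoPoncin12} finishes the argument. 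If you want to salvage your write-up, replace the ``transposition'' step by this verification of the path-object axioms plus the appeal to the general theory; the strict boundary compatibility you worry about at $t=0,1$ then comes for free and is not an obstacle at all.
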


\begin{proof}
It is enough to  prove that 
$$ 
\xymatrix@C=25pt{
\PP^{\ac}(A)  \ar@{->}[r]^(0.35)\sim & \PP^{\ac} \big(A\otimes \Lambda(t, dt)\big) \ar@{->>}[r] &  \PP^{\ac}(A\oplus A)}
$$
is a good path object in the model category of conilpotent dg $\Pac$-coalgebras of Theorem~\ref{theo:MCcoalg}.  Then, by the general theory of model categories, the associated right homotopy equivalence will be an equivalence relation, equivalent to the above left homotopy relation defined by the above good cylinder object. Finally, one can notice that this new equivalence is nothing but the one called concordance in \cite[Definition~3]{DotsenkoPoncin12}, which is proved in loc. cit. to be equivalent to the other ones. 

This statement is proved in the same way as Proposition~\ref{prop:Cylinder}, so we will only give the various arguments and constructions. 
We first notice that the right-hand term is the categorical product of the dg $\Pac$-coalgebra $(\Pac(A), d_\mu)$ with itself; the details can be found in the proof of Axiom $(\text{MC}\, 1')$ of Theorem~\ref{theo:MCcoalgPinfty}--$(1)$ in the sequel. 
The first map is defined by the unique morphism of $\Pac$-coalgebras extending 
$a \mapsto a\otimes 1$. It is straightforward to check that it commutes with the differentials. Since it is the extension of a quasi-isomorphism $A\qi A\otimes \Lambda(t,dt)$, it is a filtered quasi-isomorphism and therefore a weak equivalence of conilpotent dg $\Pac$-coalgebras. The second map is defined by the unique morphism of $\Pac$-coalgebras extending 
$a\otimes (P(t)+Q(t)dt) \mapsto a\otimes P(0) + a\otimes P(1)$. It is again straightforward to check that it commutes with the differentials. 
To prove that it forms a fibration, we use the characterization of fibrations between quasi-free dg $\Pac$-coalgebras given in Proposition~\ref{prop:InftyqiWE} (and whose proof does not depend on the present result). Since the map $A\otimes \Lambda(t,dt) \epi A \oplus A$ is a degreewise epimorphism, the map $\PP^{\ac} \big(A\otimes \Lambda(t, dt)\big) \epi \PP^{\ac}(A\oplus A)$ is a fibration. 
Finally, we check that the composite of these two maps is equal to the product of the identity with itself, which concludes the proof. 
\end{proof}

The following result refines Theorem~\ref{InverseInftyQI}: it gives a finer control of the ``inverse'' $\infty$-quasi-isomorphism. 

\begin{theo}\label{theo:InftyQiInv}
Any $\infty$-quasi-isomorphism admits a homotopy inverse. 
\end{theo}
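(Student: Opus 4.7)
The plan is to realize the $\infty$-quasi-isomorphism $f \colon A \stackrel{\sim}{\rightsquigarrow} B$ as a morphism of quasi-free coalgebras $F \colon (\Pac(A), d_\mu) \to (\Pac(B), d_\nu)$, to prove that $F$ is a weak equivalence in the model category of Theorem~\ref{theo:MCcoalg}, and then to invoke the Whitehead theorem for model categories. Since every conilpotent dg $\Pacc$-coalgebra is cofibrant and since quasi-free ones are fibrant (Theorem~\ref{theo:MCcoalg}--(2)), both $\Pac(A)$ and $\Pac(B)$ are fibrant-cofibrant. Once $F$ is known to be a weak equivalence, the standard model-category argument (see e.g.\ \cite[Prop.~1.2.8]{Hovey99}) will produce a morphism $G \colon \Pac(B) \to \Pac(A)$ together with homotopies $FG \sim \id$ and $GF \sim \id$, and the functorial cylinder of Proposition~\ref{prop:Cylinder} will rephrase these as a homotopy inverse $g \colon B \rightsquigarrow A$ satisfying $fg \sim_h \id_B$ and $gf \sim_h \id_A$ in the sense of the homotopy relation on $\infty$-morphisms.

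The heart of the argument is thus to show that $F$ is a weak equivalence. I would prove the stronger statement that $F$ is a \emph{filtered quasi-isomorphism} with respect to the weight filtration, which suffices by Proposition~\ref{lemm:FiltQIisWE}. Any morphism of $\Pac$-coalgebras preserves the weight filtration, and on a cofree coalgebra $\Pac(V)$ the codifferential splits into a weight-preserving piece $\Id_\Pac \circ' d_V$ plus strictly weight-decreasing terms coming from the $\PP_\infty$-operations and from the coderivation $d_\varphi$ on $\Pac$. The associated graded is therefore $\bigl({\Pac}^{(n)}(A), \Id \circ' d_A\bigr)$. Unravelling the cofree reconstruction $F = \Pac(\pi_B \circ F) \circ \Delta_{\Pac(A)}$, one checks that only the fully decomposing component of $\Delta_{\Pac(A)}$ keeps an element of weight $n$ in weight $n$ on the target, and on the resulting leaves it evaluates only the linear part $f_1 = (\pi_B \circ F)|_A$; hence $\gr_n F = {\Pac}^{(n)}(f_1)$. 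Since $f_1$ is a quasi-isomorphism by hypothesis and we work over a characteristic zero field, each Schur functor ${\Pac}^{(n)}$ preserves quasi-isomorphisms, so every $\gr_n F$ is a quasi-isomorphism.

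I expect the main technical obstacle to lie precisely in the graded identification above: one must carefully track the coproduct on $\Pac(A)$ and verify that the higher Taylor coefficients $f_n$, $n \geq 2$, contribute only to \emph{strictly} lower weight pieces of $F$. Once this is secured, everything else is formal: the map $g$ produced by the Whitehead theorem is automatically an $\infty$-quasi-isomorphism, since $fg \sim_h \id_B$ and $gf \sim_h \id_A$ force $H(f_1)$ and $H(g_1)$ to be mutual inverses on homology.
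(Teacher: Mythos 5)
Your proposal is correct and follows essentially the same route as the paper: realize the $\infty$-quasi-isomorphism as the map $\widetilde{\B}_\iota f$ between quasi-free (hence fibrant-cofibrant) conilpotent dg $\Pac$-coalgebras, check it is a weak equivalence, and conclude by the Whitehead theorem for model categories, translating the homotopy inverse back through the cylinder of Proposition~\ref{prop:Cylinder}. The only difference is that where the paper simply cites Proposition~$11.4.7$ of \cite{LodayVallette12} for the weak-equivalence step, you reprove it directly via the weight filtration, showing $\gr_n F = {\Pac}^{(n)}(f_1)$ and invoking Proposition~\ref{lemm:FiltQIisWE} --- an argument that correctly parallels the one given in Proposition~\ref{lemm:QIdonneFilteredQI} for the bar construction of strict algebras.
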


\begin{proof}
By Proposition~$11.4.7$ of \cite{LodayVallette12}, any $\infty$-quasi-isomorphism $f : A \stackrel{\sim}{\rightsquigarrow} A'$ induces a weak equivalence 
$$\widetilde{\B}_\iota f \, : \, \widetilde{\B}_\iota A \qi \widetilde{\B}_\iota A'$$
between fibrant-cofibrant conilpotent dg $\Pac$-coalgebras. By the general model category arguments, this latter one admits a homotopy inverse, which translated back on the level of $\PP_\infty$-algebras gives the result. 
\end{proof}

The other general consequence of the above mentioned theory is the following description of the homotopy category of dg $\PP$-algebras, defined as a localized category, as the  category of $\PP$-algebras with $\infty$-morphism up to homotopy equivalence.

\begin{theo}\label{theo:EquivalenceHoMAIN}
The following categories are equivalent
$$ 
\mathsf{Ho}(\mathsf{dg}\  \PP\textsf{-}\mathsf{alg}) \ \simeq\  
\mathsf{Ho}( \infty\textsf{-}\PP_\infty\textsf{-}\mathsf{alg})\ \simeq\ 
 \infty\textsf{-}\PP_\infty \textsf{-}\mathsf{alg}/\sim_{h} \ \simeq\ 
 \infty\textsf{-}\PP\textsf{-}\mathsf{alg}/\sim_{h} \ .$$ 
\end{theo}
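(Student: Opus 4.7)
The chain of equivalences decomposes into three isomorphisms; the first is exactly Theorem~\ref{theo:HoEquiv}, so the work consists in establishing the second and third. My overall strategy is to transport questions about $\infty$-morphisms into questions about morphisms of quasi-free $\Pac$-coalgebras through the isomorphism of categories $\widetilde{\B}_\iota$, and then invoke the model category of Theorem~\ref{theo:MCcoalg}, in which quasi-free coalgebras are precisely the fibrant-cofibrant objects.

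For the second equivalence $\mathsf{Ho}(\infty\textsf{-}\PP_\infty\textsf{-}\mathsf{alg}) \simeq \infty\textsf{-}\PP_\infty\textsf{-}\mathsf{alg}/\sim_h$, the first step is to check that $\widetilde{\B}_\iota$ sends $\infty$-quasi-isomorphisms to weak equivalences of conilpotent dg $\Pac$-coalgebras, and conversely. One direction follows directly from Theorem~\ref{theo:InftyQiInv}: an $\infty$-quasi-isomorphism admits a homotopy inverse, hence induces a homotopy equivalence between fibrant-cofibrant coalgebras, which is automatically a weak equivalence in any model category. The converse uses the Quillen equivalence of Theorem~\ref{theo:MCcoalg}--(3): applying $\Ok$ to a weak equivalence of $\Pac$-coalgebras yields a quasi-isomorphism of dg $\PP$-algebras, which together with the rectification $\infty$-quasi-isomorphisms $A \stackrel{\sim}{\rightsquigarrow} \Ok \widetilde{\B}_\iota A$ of Theorem~\ref{theo:Rectification}--(2) assembles into a zigzag of $\infty$-quasi-isomorphisms relating $A$ and $B$; by Theorem~\ref{theo:InftyQiInv} each arrow of the zigzag has a homotopy inverse, producing an honest $\infty$-quasi-isomorphism $A \stackrel{\sim}{\rightsquigarrow} B$. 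Once the two classes are identified, I would invoke the general model categorical fact \cite{Hovey99} that the homotopy category is equivalent to the full subcategory of fibrant-cofibrant objects quotiented by the homotopy relation defined via cylinder objects; combined with the explicit functorial cylinder of Proposition~\ref{prop:Cylinder}, this yields the desired equivalence.

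For the third equivalence $\infty\textsf{-}\PP_\infty\textsf{-}\mathsf{alg}/\sim_h \simeq \infty\textsf{-}\PP\textsf{-}\mathsf{alg}/\sim_h$, I would use the rectification adjunction. The inclusion $\infty\textsf{-}\PP\textsf{-}\mathsf{alg} \hookrightarrow \infty\textsf{-}\PP_\infty\textsf{-}\mathsf{alg}$ preserves the homotopy relation, since the cylinder $\PP^{\ac}(A\otimes J)$ of Proposition~\ref{prop:Cylinder} specialises unchanged when the $\PP_\infty$-structure on $A$ is strict, so the inclusion descends to a functor between the quotient categories. Essential surjectivity is provided by Theorem~\ref{theo:Rectification}--(2): any $\PP_\infty$-algebra $A$ is $\infty$-quasi-isomorphic, hence by Theorem~\ref{theo:InftyQiInv} homotopy equivalent, to the strict $\PP$-algebra $\Ok \widetilde{\B}_\iota A$. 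Full faithfulness follows because the cylinder used to define $\sim_h$ is intrinsic to the coalgebra $\widetilde{\B}_\iota A$, so $\sim_h$-classes of $\infty$-morphisms between strict algebras are the same when computed in either category.

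The main obstacle will be the identification in the second step of $\infty$-quasi-isomorphisms with weak equivalences of quasi-free $\Pac$-coalgebras, whose ``converse'' direction requires carefully assembling the rectification zigzag and then turning it into a single $\infty$-quasi-isomorphism via Theorem~\ref{theo:InftyQiInv}. Apart from that bookkeeping, the proof is entirely formal: the combination of Theorem~\ref{theo:MCcoalg} with the rectification Theorem~\ref{theo:Rectification} and the invertibility Theorem~\ref{theo:InftyQiInv} does all the remaining work, and the three equivalences of categories compose to give the displayed chain.
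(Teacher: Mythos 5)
Your overall strategy is sound and uses the same three pillars as the paper (the model structure of Theorem~\ref{theo:MCcoalg}, the identification of fibrant-cofibrant objects with quasi-free coalgebras, and the rectification Theorem~\ref{theo:Rectification}), but you route the argument differently: the paper chains all four categories together by first passing through the Quillen equivalence $\mathbb{R}\Bk \dashv \mathbb{L}\Ok$ of Corollary~\ref{coro:EquivHo}, so that the equivalence $\mathsf{Ho}(\mathsf{conil}\ \mathsf{dg}\ \Pac\textsf{-}\mathsf{coalg}) \simeq \textsf{quasi-free}\ \Pac\textsf{-}\mathsf{coalg}/\sim_h$ is the only place where fibrant-cofibrant objects enter, and it never needs to compare $\infty$-quasi-isomorphisms with weak equivalences of coalgebras at the level of individual morphisms. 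Your route instead attacks $\mathsf{Ho}(\infty\textsf{-}\PP_\infty\textsf{-}\mathsf{alg}) \simeq \infty\textsf{-}\PP_\infty\textsf{-}\mathsf{alg}/\sim_h$ head-on, which forces you to prove that the class of $\infty$-quasi-isomorphisms coincides with the class of weak equivalences between quasi-free $\Pac$-coalgebras. That statement is true --- it is Proposition~\ref{prop:InftyqiWE}--$(1)$, i.e.\ Proposition~$11.4.7$ of \cite{LodayVallette12} --- but your proof of its converse direction has a gap.

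Concretely: given a weak equivalence $F=\widetilde{\B}_\iota f$ between quasi-free coalgebras, your zigzag $A \stackrel{\sim}{\rightsquigarrow} \Ok\widetilde{\B}_\iota A \xrightarrow{\Ok F} \Ok\widetilde{\B}_\iota B \stackrel{\sim}{\leftsquigarrow} B$ only produces \emph{some} $\infty$-quasi-isomorphism between $A$ and $B$; it does not show that the given morphism $f$ itself is an $\infty$-quasi-isomorphism. Since the two localizations are formed by inverting classes of \emph{morphisms}, knowing that the objects become isomorphic is not enough --- if the class of weak equivalences were strictly larger than the class of $\infty$-quasi-isomorphisms on quasi-free coalgebras, the two localized categories could still differ. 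The repair is short and uses exactly the ingredients you already have: the rectification unit is natural, so the square relating $f_{(0)}$ to the first component of $\Ok F$ via the units $A\stackrel{\sim}{\rightsquigarrow}\Ok\widetilde{\B}_\iota A$ and $B\stackrel{\sim}{\rightsquigarrow}\Ok\widetilde{\B}_\iota B$ commutes, and two-out-of-three on homology then forces $H(f_{(0)})$ to be an isomorphism. With that fixed, the rest of your argument (Whitehead's theorem for the forward direction, the general fibrant-cofibrant description of the homotopy category, and the full, essentially surjective inclusion $\infty\textsf{-}\PP\textsf{-}\mathsf{alg}/\sim_h \to \infty\textsf{-}\PP_\infty\textsf{-}\mathsf{alg}/\sim_h$ via rectification) goes through.
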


\begin{proof}
By Corollary~\ref{coro:EquivHo} of Theorem~\ref{theo:MCcoalg}--$(3)$, the total derived functors  
$$\mathbb{R}\Bk  \ : \   \mathsf{Ho} ( \mathsf{dg} \ \PP\textsf{-}\mathsf{alg} ) \ \stackrel{\cong}{\rightleftharpoons}\  
\mathsf{Ho}(\mathsf{conil}\ \mathsf{dg} \ \Pac\textsf{-}\mathsf{coalg})
   \ : \  \mathbb{L} \Ok  $$
form an equivalence of categories. By the general arguments of model categories, the right-hand category is equivalent to the category of fibrant-cofibrant conilpotent dg $\Pac$-coalgebras modulo the homotopy relation. So by Theorem~\ref{theo:MCcoalg}--$(2)$, we get the following equivalence of categories
$$\mathsf{Ho}(\mathsf{conil}\ \mathsf{dg} \ \Pac\textsf{-}\mathsf{coalg})\stackrel{\cong}{\rightleftharpoons}
  \textsf{quasi}\textsf{-}\textsf{free}\  \PP^{\ac}\textsf{-}\textsf{coalg}/\sim_h \ .$$
  We then use the equivalence between the category of quasi-free $\Pac$-coalgebras and the category of $\PP_\infty$-algebras with their $\infty$-morphisms. Finally, we conclude with Theorem~\ref{theo:Rectification}, which shows that any $\PP_\infty$-algebra can be rectified into a dg $\PP$-algebra:
$$\infty\textsf{-}\PP_\infty\textsf{-}\mathsf{alg}/\sim_{h}\
\stackrel{\cong}{\rightleftharpoons} \ \infty\textsf{-}\PP\textsf{-}\mathsf{alg}/\sim_{h} \ .$$ 
\end{proof}

Using the explicit homotopy relation defined by the cylinder object associated to $I$, this recovers the classical homotopy relation of $A_\infty$-morphisms and the description of the homotopy category for \emph{unbounded} dg associative algebras, see  \cite{Munkholm78, LefevreHasegawa03}. 

%%%%%%%%%%%%%%%% Localization -> infty-category %%%%%%%%%%

\subsection{An $\infty$-category enrichment of homotopy algebras}
The previous result deals with the homotopy category of $\PP_\infty$-algebras, which is only the first  homotopical level of information. At the present stage of the theory, we have objects (the $\PP_\infty$-algebras), $1$-morphisms (the $\infty$-morphisms) and $2$-morphisms (the homotopy relation). However, one can go further, thanks to the model category structure established in the previous section, and prove that the category of $\PP_\infty$-algebras actually extends to an $\infty$-category.

\begin{theo}\label{thm:inftycat}
The category $ \infty\textsf{-}\PP_\infty\textsf{-}\mathsf{alg}$ of $\PP_\infty$-algebras 
with $\infty$-morphisms extends to a simplicial category giving the same underlying homotopy category. 
\end{theo}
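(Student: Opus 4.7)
The plan is to construct an explicit simplicial enrichment in the spirit of Proposition~\ref{prop:Cylinder}, using the Sullivan simplicial commutative dg algebra $\Omega_\bullet$ of polynomial differential forms on the geometric simplices $\Delta^\bullet$. Since the cylinder object of Proposition~\ref{prop:Cylinder} was built from $\Omega_1 = \Lambda(t,dt)$ together with the cdga tensor product trick, the natural upgrade is to use the full cosimplicial cdga $\Omega_n = \Lambda(t_0,\dots,t_n,dt_0,\dots,dt_n)/(\sum t_i-1,\sum dt_i)$ and its Dupont-type contractions onto the subcomplexes modelling cellular cochains of $\Delta^n$ (cf.~\cite{Dupont76}).

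First, I would observe that for any $\PP_\infty$-algebra $(A,\mu)$ the tensor product $A \otimes \Omega_n$ carries a functorial $\PP_\infty$-algebra structure via the same formula as in Section~3.1, namely $\widetilde{\mu}_n : \PP^{\ac}\cong \PP^{\ac}\otimes\Com\xrightarrow{\mu\otimes\nu_n}\End_A\otimes\End_{\Omega_n}\to\End_{A\otimes\Omega_n}$, and that this assignment is cosimplicial in $n$ with respect to the face and degeneracy maps of $\Omega_\bullet$. I then define
\[
 \mathrm{Map}(A,B)_n \ := \ \Hom_{\infty\text{-}\PP_\infty\text{-}\mathsf{alg}}\bigl(A,\, B\otimes\Omega_n\bigr)
 \ \cong \ \Hom_{\mathsf{dg}\,\PP^{\ac}\text{-}\mathsf{coalg}}\bigl(\widetilde{\B}_\iota A,\, \widetilde{\B}_\iota(B\otimes\Omega_n)\bigr),
\]
with simplicial structure induced from $\Omega_\bullet$. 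For composition, given $f\in\mathrm{Map}(A,B)_n$ and $g\in\mathrm{Map}(B,C)_n$, I would first extend $g$ to an $\infty$-morphism $\tilde{g}:B\otimes\Omega_n\rightsquigarrow C\otimes\Omega_n$ by $\Omega_n$-multilinearity, using the multiplication $\Omega_n^{\otimes k}\to\Omega_n$ on each arity-$k$ component, and set $g\bullet f := \tilde g\circ f$; unitality and associativity follow from the commutativity and associativity of $\Omega_n$.

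Next I would verify that the resulting simplicial category $\mathcal{S}\infty\text{-}\PP_\infty\text{-}\mathsf{alg}$ has $0$-simplices the usual $\infty$-morphisms and that $\pi_0\mathrm{Map}(A,B)$ coincides with the homotopy equivalence classes of $\infty$-morphisms in the sense of Section~3.2. The key input is that $\Omega_1$ coincides with the commutative model for the interval used in Proposition~\ref{prop:Cylinder}, so that the $1$-simplex witnesses are precisely the homotopies of Theorem~\ref{theo:EquivalenceHoMAIN}; higher simplices only serve to witness higher coherences and do not alter $\pi_0$. Combining with Theorem~\ref{theo:EquivalenceHoMAIN} then yields the identification of $\pi_0 \mathcal{S}\infty\text{-}\PP_\infty\text{-}\mathsf{alg}$ with $\mathsf{Ho}(\mathsf{dg}\,\PP\text{-}\mathsf{alg})$.

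Alternatively, and more conceptually, one can apply the Dwyer--Kan hammock localization of \cite{DwyerKan80} to the model category of conilpotent dg $\PP^{\ac}$-coalgebras provided by Theorem~\ref{theo:MCcoalg}, restrict to the full simplicial subcategory of fibrant-cofibrant objects (which by Theorem~\ref{theo:MCcoalg}--$(2)$ are precisely the quasi-free coalgebras, i.e.\ $\PP_\infty$-algebras with $\infty$-morphisms), and invoke the standard Dwyer--Kan result that this simplicial category has homotopy category equal to the homotopy category of the model category, which by Corollary~\ref{coro:EquivHo} and Theorem~\ref{theo:HoEquiv} is $\mathsf{Ho}(\infty\text{-}\PP_\infty\text{-}\mathsf{alg})$. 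The main obstacle in the explicit approach is checking well-definedness and associativity of the simplicial composition --- especially showing that the $\Omega_n$-multilinear extension of an $\infty$-morphism respects codifferentials, which is the natural higher analogue of the computation via Lemma~\ref{lem:HTTProperty1} carried out in the proof of Proposition~\ref{prop:Cylinder}; in the Dwyer--Kan approach the main issue is instead identifying the hammock mapping spaces with the more explicit $\mathrm{Map}(A,B)_\bullet$ above, which would be desirable for applications but is not needed for the bare statement of the theorem.
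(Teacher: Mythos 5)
Your ``alternative, more conceptual'' paragraph is in fact the paper's entire proof: the theorem is established there by a direct application of the Dwyer--Kan simplicial localization \cite{DwyerKan80} to the model category of conilpotent dg $\PP^{\ac}$-coalgebras of Theorem~\ref{theo:MCcoalg}, restricted to the fibrant-cofibrant objects, which by Theorem~\ref{theo:MCcoalg}--$(2)$ and the isomorphism $\widetilde{\B}_\iota$ are exactly the $\PP_\infty$-algebras with their $\infty$-morphisms; the statement about the underlying homotopy category is then the standard property of the Dwyer--Kan localization together with Corollary~\ref{coro:EquivHo}. So that part of your proposal is correct and matches the paper.

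Your primary route --- the explicit simplicial enrichment $\mathrm{Map}(A,B)_n:=\Hom(A,B\otimes\Omega_n)$ built from polynomial differential forms --- is a genuinely different construction, much closer in spirit to the one of Dolgushev--Hoffnung--Rogers \cite{DolgushevHoffnungRogers14}, and the paper explicitly leaves the comparison between that kind of structure and the Dwyer--Kan one as a question for further study. If you wanted this explicit route to stand on its own as a proof, two steps are genuinely incomplete. First, strict associativity and unitality of the composition $g\bullet f:=\tilde g\circ f$: for genuine $\infty$-morphisms the $\Omega_n$-multilinear extension $\tilde g$ distributes the product of $\Omega_n$ over all arity components, and associativity requires the identity $\widetilde{h\bullet g}=\tilde h\circ\tilde g$, a computation in the style of Lemma~\ref{lem:HTTProperty1} that you flag but do not carry out; without it you only get a composition up to homotopy, i.e.\ not a simplicial category in the strict sense claimed. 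Second, the identification $\pi_0\mathrm{Map}(A,B)\cong$ homotopy classes of $\infty$-morphisms needs more than the observation that $\Omega_1$ is the interval model of Proposition~\ref{prop:Cylinder}: you must check that $B\otimes\Omega_1$ is a \emph{good} path object (this is essentially the concordance path object of Section~3.2) \emph{and} that the relation generated by the two face maps $d_0,d_1$ on $1$-simplices is already an equivalence relation, which uses fibrancy of the objects involved. None of these issues arises in the Dwyer--Kan route, which is why the paper takes it.
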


\begin{proof}
This is a direct application of the simplicial localization methods of Dwyer--Kan \cite{DwyerKan80}.
\end{proof}

During the preparation of this paper, Dolgushev--Hoffnung--Rogers \cite{DolgushevHoffnungRogers14} used the integration theory of $L_\infty$-algebras \cite{Getzler09} to endow the category of $\PP_\infty$-algebra with another model of $\infty$-category. The comparison between these two $\infty$-category structures 
is an interesting subject for further studies. 

%%%%%%%%%%  Homotopy algebras %%%%%%%%%%%%%%%%%%%%

\section{Homotopy algebras}

The model category structure  for algebras over an operad of Theorem~\ref{theo:Hinich97} applies as well to the category $\PP_\infty$\textsf{-alg} of $\PP_\infty$-algebras with their \emph{strict} morphisms. But if we consider the category $\infty$\textsf{-}$\PP_\infty$\textsf{-alg} of $\PP_\infty$-algebras with their $\infty$-morphisms, then it cannot admit a model category structure strictly speaking since it lacks some colimits like  coproducts.

With the abovementioned isomorphism between the category of quasi-free $\Pac$-coalgebras and the category of $\PP_\infty$-algebras with their $\infty$-morphisms, Theorem~\ref{theo:MCcoalg} shows that the category $\infty$\textsf{-}$\PP_\infty$\textsf{-alg} is endowed with a \emph{fibrant objects category structure}, see \cite{Brown73} for the definition. Such a notion is defined by  two classes of maps: the weak equivalences and the fibrations. Notice that  the fibrations of $\PP^{\ac}$-coalgebras has not been made explicit so far.

In this section, we refine this result: we provide the category $\infty$\textsf{-}$\PP_\infty$\textsf{-alg} of homotopy $\PP$-algebras and their $\infty$-morphisms with almost a {model category structure}, only the first axiom  on limits and colimits is not completely fulfilled. 
As a consequence, this will allow us to describe the fibrations between quasi-free $\PP^{\ac}$-coalgebras.

\subsection{Almost a model category}

\begin{defi}
In the category of homotopy $\PP$-algebras with their $\infty$-morphisms, we consider the following three classes of morphisms. 

\begin{itemize}
\item[$\diamond$] The class $\mathfrak W$ of \emph{weak equivalences} is given by the $\infty$-quasi-isomorphisms  $f : A \stackrel{\sim}{\rightsquigarrow} A'$, i.e. the $\infty$-morphisms whose first component $f_{(0)} : A \qi A'$ is a quasi-isomorphism;  

\item[$\diamond$] the class $\mathfrak C$ of \emph{cofibrations} is given by the $\infty$-monomorphisms  
$f :A \rightsquigarrow A'$, 
i.e. the $\infty$-morphisms whose first component $f_{(0)} : A  \mono A'$ is a monomorphism;  

\item[$\diamond$] the class $\mathfrak F$ of \emph{fibrations} 
is given by $\infty$-epimorphisms  $f : A \rightsquigarrow A'$, i.e. the $\infty$-morphisms whose first component $f_{(0)}  : A \epi A'$ is a epimorphism;  

\end{itemize}
\end{defi}

\begin{theo}\label{theo:MCcoalgPinfty}\leavevmode
\begin{enumerate}

\item The category $\infty$\textsf{-}$\PP_\infty$\emph{\textsf{-alg}} of $\PP_\infty$-algebras with their $\infty$-morphisms, endowed with the three classes of maps $\mathfrak W$, $\mathfrak C$, and $\mathfrak F$, satisfies the axioms $(\mathrm{MC}\, 2)$--$(\mathrm{MC}\, 5)$ of model categories and the following axiom.

\smallskip
\noindent
$(\mathrm{MC}\, 1')\quad $ This category admits finite products and pullbacks of fibrations.
\smallskip

\item Every $\PP_\infty$-algebra is fibrant and cofibrant.
\end{enumerate}
\end{theo}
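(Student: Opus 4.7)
The strategy is to transport the model category structure on conilpotent dg $\Pac$-coalgebras from Theorem~\ref{theo:MCcoalg} to $\infty\textsf{-}\PP_\infty\textsf{-}\mathsf{alg}$, using the identification of the latter with the full subcategory of quasi-free dg $\Pac$-coalgebras. The first step is to match the three distinguished classes under this identification. An $\infty$-quasi-isomorphism corresponds to a weak equivalence of $\Pac$-coalgebras by combining Proposition~$11.4.7$ of \cite{LodayVallette12} with the unit of the rectification adjunction of Theorem~\ref{theo:Rectification}. A morphism $F:\Pac(A)\to\Pac(B)$ of quasi-free coalgebras preserves the weight filtration, and its associated graded in weight $\omega$ is $\Pac^{(\omega)}(f_{(0)})$, so $F$ is a degreewise monomorphism (respectively epimorphism) iff $f_{(0)}$ is; combined with Proposition~\ref{prop:InftyqiWE}, this matches $\infty$-monomorphisms with cofibrations and $\infty$-epimorphisms with fibrations between quasi-free coalgebras. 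Point~$(2)$ is then immediate from Theorem~\ref{theo:MCcoalg}--$(2)$, since every quasi-free object is both cofibrant and fibrant in the ambient coalgebra model category.

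Axioms $(\mathrm{MC}\,2)$ and $(\mathrm{MC}\,3)$ reduce, via the functor $f\mapsto f_{(0)}$ to dg modules, to the classical two-out-of-three and retract closure for quasi-isomorphisms, monomorphisms, and epimorphisms. For $(\mathrm{MC}\,4)$, any lifting square of quasi-free $\Pac$-coalgebras admits a lift as a morphism of dg $\Pac$-coalgebras by Theorem~\ref{theo:MCcoalg}, and such a morphism between quasi-free coalgebras is automatically an $\infty$-morphism.

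For $(\mathrm{MC}\,1')$, I would construct the product of $A$ and $A'$ as $A\oplus A'$ endowed with the direct sum $\PP_\infty$-structure, so that the two projections are strict and hence fibrations; the universal property follows from the fact that an $\infty$-morphism into a product is determined by its two components. For the pullback of a fibration $p:C\rightsquigarrow D$ along any $g:B\rightsquigarrow D$, I would take the ordinary pullback $B\times_D C$ of underlying chain complexes (available since we work over a field) and equip it with the $\PP_\infty$-structure inherited coherently from $B$ and $C$; a direct verification yields the universal property.

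The main obstacle is $(\mathrm{MC}\,5)$. For the factorization of $f:A\rightsquigarrow B$ into an acyclic cofibration followed by a fibration, I would use the mapping path space $M_f := A\times_{f,B,\mathrm{ev}_0}\big(B\otimes\Lambda(t,dt)\big)$, whose existence is guaranteed by $(\mathrm{MC}\,1')$ once $\mathrm{ev}_0$ is seen to be a fibration; the inclusion of constant paths $A\rightsquigarrow M_f$ is then an $\infty$-monomorphism and an $\infty$-quasi-isomorphism, while $\mathrm{ev}_1$ produces a fibration $M_f\rightsquigarrow B$. For the dual factorization I would build a $\PP_\infty$-mapping cylinder on the chain complex $A\oplus B\oplus sA$ equipped with the standard mapping-cylinder differential, using the Homotopy Transfer Theorem \cite[Sec.~$10.3$]{LodayVallette12} to produce a $\PP_\infty$-structure that keeps the inclusion of $A$ strict while making the projection onto $B$ an $\infty$-morphism that is surjective and an $\infty$-quasi-isomorphism. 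Producing this cylinder coherently is the technical heart of the argument; an alternative is to mirror the proof of $(\mathrm{MC}\,5)$ in Theorem~\ref{theo:MCcoalg}--$(1)$, by factoring $\Omega_\kappa F$ in dg $\PP$-algebras and transporting the result back via $\B_\kappa$ and a suitable pullback in coalgebras.
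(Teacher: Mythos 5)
Your overall strategy --- identify $\infty\textsf{-}\PP_\infty\textsf{-alg}$ with the quasi-free dg $\Pac$-coalgebras and inherit everything from Theorem~\ref{theo:MCcoalg} --- is natural, but it fails at precisely the two places where real work is needed. The matching of distinguished classes is circular as you set it up: cofibrations of coalgebras are \emph{defined} as degreewise monomorphisms, so that half is fine, but fibrations of conilpotent dg $\Pac$-coalgebras are defined by a right lifting property, not as degreewise epimorphisms. The claim that an $\infty$-epimorphism between quasi-free coalgebras is a coalgebra fibration is exactly Proposition~\ref{prop:InftyqiWE}--$(3)$, whose proof in the paper invokes axioms $(\mathrm{MC}\,4)$ and $(\mathrm{MC}\,5)$ of the very theorem you are proving. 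Consequently your argument for $(\mathrm{MC}\,4)$ (``any lifting square admits a lift by Theorem~\ref{theo:MCcoalg}'') never gets started: you cannot feed the square into the coalgebra model structure without already knowing that the right-hand map is a fibration there. The paper instead proves $(\mathrm{MC}\,4)$ from scratch: it uses Proposition~\ref{prop:strification} to replace $f$ and $g$ by strict morphisms, then constructs the lift weight by weight via the obstruction theory of Appendix~\ref{App:Obstruction}, using the acyclicity of $\coker \Pac^{(n)}(f)$ or of $\ker g$ to kill the obstruction cycle at each stage.

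The factorization axiom cannot be inherited either, because the subcategory of quasi-free coalgebras is not closed under the factorizations of Theorem~\ref{theo:MCcoalg}: the intermediate object $\B_\kappa A\times_{\B_\kappa\Omega_\kappa D}D$ appearing there is not quasi-free in general, so your fallback of factoring $\Omega_\kappa F$ in dg $\PP$-algebras and transporting back leaves the category $\infty\textsf{-}\PP_\infty\textsf{-alg}$. Your primary suggestion (mapping path space and mapping cylinder) is closer in spirit to a workable argument, but you concede that producing the cylinder coherently is ``the technical heart'' and do not carry it out, so $(\mathrm{MC}\,5)$ remains unproved. The paper avoids this machinery with two elementary constructions that stay inside the category: for the (acyclic cofibration, fibration) factorization it takes the product of $A$ with the acyclic cone on the suspension of $\coker(f_{(0)})$ equipped with the trivial $\PP_\infty$-structure, extending the projection to an $\infty$-morphism by the same obstruction argument as in $(\mathrm{MC}\,4)$; for the (cofibration, acyclic fibration) factorization it takes the product of $B$ with the cone of $\id_A$, using Corollary~\ref{coro:ExtensionQIinftyMor} to extend the inclusion of $A$ into its cone to an $\infty$-morphism. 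Your explicit product for $(\mathrm{MC}\,1')$ and the observation that point $(2)$ is immediate from the definitions of $\mathfrak C$ and $\mathfrak F$ do agree with the paper.
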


Recall that a category admits finite colimits if and only if it admits finite coproducts and coequalizers.
The present category lacks coproducts. It is enough to consider the two 
dimension $1$ trivial $A_\infty$-algebras $\bar{T}^c(sx)$ and $\bar{T}^c(sy)$, viewed as quasi-free coassocaitive coalgebras, and to see that they do not admit coproducts in that category. 
The situation about equalizers and coequalizers is more subtle and requires further studies.

\subsection{Properties of  $\infty$-morphisms}
The proof of Theorem~\ref{theo:MCcoalgPinfty} relies on the algebraic properties of $\infty$-morphism given in this section and on the obstruction theory developped in Appendix~\ref{App:Obstruction}.\\

Recall from \cite{LodayVallette12} that the dg module $\Hom_\Sy(\PP^{\ac}, {\End}_A)$ 
is endowed with a preLie product defined by
 $$f\star g:=   {\gamma}_{\PP} \circ  \big(f\otimes g \big) \circ    \Delta_{(1)}\ ,$$
where $\Delta_{(1)} : \PP^{\ac}\to \TTT(\PP^{\ac})^{(2)}$ is the partial decomposition map of the cooperad $\PP^{\ac}$. This preLie product induces the Lie bracket of Section~\ref{subsec:OHA} by antisymetrization $[f,g]:= f\star g - (-1)^{|f||g|}g\star f$. So the Maurer--Cartan equation encoding $\PP_\infty$-algebra structures  is equivalently written 
$$\partial (\alpha) + \alpha \star \alpha=0\ . $$

We consider the dg $\Sy$-module  $\End^A_B$ defined by 
$${\End}^A_B:=\big(\lbrace \Hom(A^{\t n}, B)   \rbrace_{n\in \NN}, \partial^A_B\big)\ . $$ 
Let $\mu \in \Hom_\Sy(\PP^{\ac}, {\End}_A)$, $\nu \in \Hom_\Sy(\PP^{\ac}, {\End}_B)$ and $f \in \Hom_\Sy(\PP^{\ac}, {\End}^A_B)$. 
We consider the following two operations:
\begin{eqnarray*}
f\ast \mu &:=& \PP^{\ac} \xrightarrow{\DD_{(1)}} \PP^{\ac} \circ_{(1)} {\PP}^{\ac}
\xrightarrow{f\circ_{(1)}\mu} {\End}_B^A\circ_{(1)}{\End}_A \to {\End}^A_B \ , \\
\nu\circledast f &:=& \PP^{\ac} \xrightarrow{\DD} {\PP}^{\ac} \circ {\PP}^{\ac}
\xrightarrow{\nu \circ f} {\End}_B\circ {\End}_B^A \to {\End}_B^A\ , 
\end{eqnarray*}
where the right-hand maps is the usual composite of functions. 

\begin{theo}[Theorem~$10.2.3$  of \cite{LodayVallette12}]\label{theo:EquivInftyMorph}
Let $(A, d_A, \mu)$ and $(B, d_B, \nu )$ be two $\PP_\infty$-algebras. 
An $\infty$-morphism $F : A \rightsquigarrow B $  of $\PP_\infty$-algebras  is equivalent to a morphism of dg $\Sy$-modules $f : {\PP}^{\ac} \to \End^A_B$
satisfying 
\begin{eqnarray}\label{eqn:inftyMorphEquiv}
\partial(f) =f * \mu - \nu \circledast f 
\end{eqnarray}
in $\Hom_\Sy(\PP^{\ac},\End_B^A)$.
\end{theo}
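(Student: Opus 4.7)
The plan is to use the universal property of cofree conilpotent $\PP^{\ac}$-coalgebras at two levels: first to parametrize morphisms of \emph{graded} coalgebras, and then to translate the differential compatibility into the Maurer--Cartan-type equation.

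First I would establish the parametrization at the graded level. Since $\PP^{\ac}(B)$ is cofree as a conilpotent $\PP^{\ac}$-coalgebra on the underlying graded module $B$, any morphism of graded $\PP^{\ac}$-coalgebras $F \colon \PP^{\ac}(A) \to \PP^{\ac}(B)$ is uniquely determined by its projection $\pi_B \circ F \colon \PP^{\ac}(A) \to B$ onto the cogenerators. Using the arity decomposition $\PP^{\ac}(A) = \bigoplus_n \PP^{\ac}(n) \otimes_{\Sy_n} A^{\otimes n}$, such a projection is equivalent, by tensor-hom adjunction and $\Sy_n$-equivariance, to a collection of maps $\PP^{\ac}(n) \to \Hom(A^{\otimes n}, B)$, i.e.\ to a morphism of $\Sy$-modules $f \colon \PP^{\ac} \to \End_B^A$. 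Conversely, given $f$, the associated coalgebra morphism $F$ is reconstructed explicitly as the composite
\[
F \colon \PP^{\ac}(A) \xrightarrow{\Delta_A} \PP^{\ac} \circ \PP^{\ac}(A) \xrightarrow{\mathrm{id}_{\PP^{\ac}} \circ f} \PP^{\ac}(B),
\]
where $\Delta_A$ is induced by the cooperad cocomposition $\Delta \colon \PP^{\ac} \to \PP^{\ac} \circ \PP^{\ac}$. The coassociativity of $\Delta$ and the $\Sy$-equivariance of $f$ ensure that $F$ is indeed a morphism of graded $\PP^{\ac}$-coalgebras.

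Next I would translate the differential compatibility. The codifferentials on $\PP^{\ac}(A)$ and $\PP^{\ac}(B)$ are sums of the internal differentials (induced by $d_A$, $d_B$ and by $d_\varphi$ on $\PP^{\ac}$) together with the unique coderivations $d_\mu$ and $d_\nu$ extending $\mu \colon \PP^{\ac}(A) \to A$ and $\nu \colon \PP^{\ac}(B) \to B$ respectively. By the cofreeness argument again, a coderivation is determined by its corestriction to cogenerators, so the equation $F \circ (d_A + d_\mu) = (d_B + d_\nu) \circ F$ holds if and only if it holds after composition with $\pi_B$. Projecting each piece to $B$ yields one equation in $\Hom(\PP^{\ac} \circ A, B)$, equivalent to an equation in $\Hom_\Sy(\PP^{\ac}, \End_B^A)$.

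Finally, I would identify the four resulting terms. The internal-differential contributions, $\pi_B \circ d_B \circ F - \pi_B \circ F \circ d_A$ combined with the $d_\varphi$-contribution on $\PP^{\ac}$, give precisely the Hom-complex differential $\partial(f)=\partial^A_B(f)$. The term $\pi_B \circ F \circ d_\mu$ uses the partial decomposition $\Delta_{(1)} \colon \PP^{\ac} \to \PP^{\ac} \circ_{(1)} \PP^{\ac}$: one factor becomes $\mu$ (applied to some leaves of $A$) and the outer factor becomes $f$, giving exactly $f \ast \mu$. The term $\pi_B \circ d_\nu \circ F$ uses the full decomposition $\Delta \colon \PP^{\ac} \to \PP^{\ac} \circ \PP^{\ac}$ together with the explicit formula for $F$: the inner factor is processed by $f$ into elements of $B$, and the outer factor $\nu$ acts on the results, giving exactly $\nu \circledast f$. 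Reassembling yields $\partial(f) = f \ast \mu - \nu \circledast f$.

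The main obstacle is the bookkeeping in the last step: one must carefully track signs (from the Koszul sign rule when commuting $d_B$, $d_A$ and $d_\varphi$ with $f$), match $\Sy_n$-actions when unfolding $\Delta_{(1)}$ and $\Delta$, and verify that the formula for $F$ in terms of $f$ actually lifts to a morphism of dg (not merely graded) coalgebras precisely under the stated equation. These verifications are essentially the content of the general formula relating coderivations on cofree conilpotent $\PP^{\ac}$-coalgebras to their corestrictions, combined with the definition of the convolution preLie product, so no deeper idea is needed beyond careful expansion.
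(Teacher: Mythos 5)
Your proposal is correct and follows exactly the standard argument (this is Theorem~$10.2.3$ of \cite{LodayVallette12}, which the paper cites without reproving): use the universal property of the cofree conilpotent $\PP^{\ac}$-coalgebra to reduce both the coalgebra-morphism condition and the commutation with the codifferentials to their corestrictions onto cogenerators, and then identify the resulting terms with $\partial(f)$, $f\ast\mu$ and $\nu\circledast f$ via $\Delta_{(1)}$ and $\Delta$. The only point worth making explicit is that $\partial(f)$ here absorbs the $d_\varphi$-contribution from the Koszul dual cooperad as well as $d_A$ and $d_B$ (as in Equation~\eqref{eqn:InftyMor(n)} of the appendix), which you do note; otherwise the bookkeeping is exactly as you describe.
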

Using this equivalent definition of $\infty$-morphisms, the composite of  $f : A \rightsquigarrow B$ with $g : B\rightsquigarrow C$ is given by 
$$g \circledcirc f :={\PP}^{\ac} \xrightarrow{{\DD}} {\PP}^{\ac} \circ {\PP}^{\ac} \xrightarrow{g \circ f} {\End}^B_C \circ {\End}^A_B \to {\End}^A_C\ .$$
Notice that the product $\circledcirc$ is associative and left linear. 

For any element $\mu \in \Hom_\Sy(\PP^{\ac}, {\End}_A)$ and any element $f \in \Hom_\Sy(\PP^{\ac}, {\End}^A_B)$,  we denote by 
$$\mu_{(n)} \in \Hom_\Sy({\PP^{\ac}}^{(n)}, {\End}_A) \ \text{and by} \ f_{(n)}\in \Hom_\Sy({\PP^{\ac}}^{(n)}, {\End}^A_B) $$ the respective restrictions to 
the weight $n$ part ${\PP^{\ac}}^{(n)}$ of the cooperad $\PP^{\ac}$.
Recall that an $\infty$-morphism $f : A \rightsquigarrow B$ of $\PP_\infty$-algebras is a (strict) morphism of $\PP_\infty$-algebras if and only if its higher components $f_{(n)}=0$ vanish for $n\ge 1$.

\begin{prop}\label{prop:strification}$ \ $

\begin{enumerate}

\item Let $f : A \rightsquigarrow B$ be an $\infty$-monomorphism. There exists a $\PP_\infty$-algebra $C$ together with an $\infty$-isomorphism $ g : B \rightsquigarrow C$ such that the composite $gf$ is a (strict) morphism of $\PP_\infty$-algebras. 

\item  Let $g :  B\rightsquigarrow C$ be an $\infty$-epimorphism. There exists a $\PP_\infty$-algebra $A$ together with an $\infty$-isomorphism $ f : A \rightsquigarrow B$ such that the composite $gf$ is a (strict) morphism of $\PP_\infty$-algebras.
\end{enumerate}
\end{prop}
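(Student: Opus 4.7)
The strategy for both parts is to interpret $\infty$-morphisms as morphisms of dg $\Pac$-coalgebras between quasi-free coalgebras (Theorem~\ref{theo:EquivInftyMorph}) and then to construct a ``change of coordinates'' $\infty$-isomorphism that absorbs the higher components of $f$ (respectively $g$). The new $\PP_\infty$-algebra $C$ (respectively $A$) will share the underlying graded module of $B$ but carry a $\PP_\infty$-structure obtained by transporting the codifferential along the $\infty$-isomorphism.

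For (1), let $F : \Pac(A) \to \Pac(B)$ be the morphism of dg $\Pac$-coalgebras corresponding to $f$. I would set $C := B$ as a graded module, $g_{(0)} := \id_B$, and construct an automorphism $G$ of the underlying graded $\Pac$-coalgebra $\Pac(B)$ by induction on the weight, chosen so that the composite $G \circ F$ projects to zero on $B$ from every weight $n \geq 1$ component of $\Pac(A)$. Expanding $g \circledcirc f$ through the cocomposition $\Delta$ of $\Pac$ and isolating the extremal contributions, the vanishing condition reads
$$g_{(n)}\bigl(f_{(0)}, \ldots, f_{(0)}\bigr) = -f_{(n)} - \sum_{\substack{0 < w < n \\ w+w_1+\cdots+w_k = n}} g_{(w)}\bigl(f_{(w_1)}, \ldots, f_{(w_k)}\bigr),$$
whose right-hand side depends only on $f$ and on the $g_{(w)}$ already built at previous steps. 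Since $f_{(0)}$ is a monomorphism, choosing a graded complement $E$ of $f_{(0)}(A)$ in $B$ produces a $\Sy_n$-equivariant splitting $B^{\otimes n} = f_{(0)}(A)^{\otimes n} \oplus M_n$, where $M_n$ gathers the mixed tensors with at least one factor in $E$; I would then define $g_{(n)}$ by the prescribed value on the first summand and by zero on the second. Once $G$ is assembled, transporting the codifferential by conjugation, $d_\xi := G \circ d_\nu \circ G^{-1}$, yields a square-zero coderivation on $\Pac(C)$, hence a $\PP_\infty$-structure on $C$, while making $G$ a dg coalgebra isomorphism by construction; the composite $G \circ F$ is then a strict morphism.

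Part (2) is entirely dual. I would pick a graded linear section $s : C \to B$ of the surjection $g_{(0)}$ (possible over a field), set $A := B$ with $f_{(0)} := \id_B$, and recursively solve the analogous equation
$$g_{(0)} \circ f_{(n)} = -g_{(n)} - \sum_{\substack{0 < w < n \\ w+w_1+\cdots+w_k = n}} g_{(w)}\bigl(f_{(w_1)}, \ldots, f_{(w_k)}\bigr)$$
by setting $f_{(n)}$ equal to $s$ composed with the right-hand side; pulling back the codifferential via the resulting graded $\Pac$-coalgebra automorphism $F$ equips $A$ with a $\PP_\infty$-structure for which $g \circ f$ is strict.

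The main obstacle I expect is purely combinatorial: carefully unfolding the composition formula through the cocomposition of $\Pac$ and tracking the additive weight grading so as to cleanly isolate the top extremal term $g_{(n)}(f_{(0)}, \ldots, f_{(0)})$ and the bottom extremal term $f_{(n)}$ from the middle contributions built from previously constructed data. Compatibility of the resulting morphisms with the differentials is then tautological, since the transported codifferential makes $G$ (respectively $F$) a dg coalgebra isomorphism by definition.
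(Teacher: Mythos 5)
Your proposal is correct and takes essentially the same route as the paper: an inductive, weight-by-weight construction of the correcting $\infty$-isomorphism with $g_{(0)}=\id_B$ (resp.\ $f_{(0)}=\id_B$) using a retraction of $f_{(0)}$ (resp.\ a section of $g_{(0)}$), followed by conjugating the codifferential, $d_\xi := G\, d_\nu\, G^{-1}$, to produce the new $\PP_\infty$-structure for which the composite becomes strict. The paper merely packages your ``prescribe on $f_{(0)}(A)^{\otimes n}$ and extend by zero on the complement'' step as precomposition with $r^{\otimes n}$ for a chosen retraction $r$, which amounts to the same choice.
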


\begin{proof}$ \ $

\begin{enumerate}

\item Since the map $f_{(0)} : A\mono B$ is a monomorphism of graded modules, it admits a retraction $r : B \epi A$, such that $r f_{(0)}= \id_A$. We define a series of linear maps $g_{(n)} : {\Pac}^{(n)} \to \End_B$ by induction as follows. Let $g_{(0)}$ be equal to ${\Pac}^{(0)}=\I \mapsto \id_B$. Suppose the maps $g_{(k)}$ constructed up to $k=n-1$, we define the map $g_{(n)}$ by the formula 
$$g_{(n)}:=-
\sum_{k=0}^{n-1} g_{(k)} \circledcirc (r^*f)
 \ , $$
where the map $r^*f$ is equal to the composite 
$$r^*f : \Pac(n) \xrightarrow{f} \Hom(A^{\otimes n}, B) \xrightarrow{(r^{\otimes n})^*} \Hom(B^{\otimes n}, B)\ . $$
So for $n\ge 1$, the weight $n$ part of the composite $gf$ is equal to 
\begin{eqnarray*}
(gf)_{(n)}=\sum_{k=0}^n g_{(k)}\circledcirc f = \sum_{k=0}^{n-1} g_{(k)}\circledcirc f + g_{(n)} \circledcirc f_{(0)}
= \sum_{k=0}^{n-1} \big( g_{(k)}\circledcirc f       -g_{(k)}\circledcirc (\underbrace{r^*f\circledcirc f_{(0)}}_{=f\circledcirc (rf_{(0)})=f}) \big)  =0\ .
\end{eqnarray*}
Since the image of $g_{(0)}$ is an invertible map, the full map $g\in \Hom_{\Sy}(\Pac, \End_B)$ induces an isomorphism 
$ G : \Pac(B) \to \Pac(B)$
of $\Pac$-coalgebras, with inverse $G^{-1}$ given by the formulae of \cite[Theorem~$10.4.1$]{LodayVallette12}.
Let $\nu \in \Hom_{\Sy}(\Pac, \End_B)$ denote the $\PP_\infty$-algebra structure on $B$, with  corresponding codifferential of $\Pac(B)$ denoted by $d_\nu$. We consider the square-zero degree $-1$ map on $\Pac(B)$ given by 
$d_\xi:=Gd_\nu G^{-1}$.
The following commutative diagram shows that $d_\xi$ is a coderivation. 
$$\xymatrix@C=35pt{\Pac(B) \ar[r]^{G^{-1}}  \ar[d]^{\Delta(B)} \ar@/^5ex/[rrr]^{d_\xi}&  \Pac(B) \ar[r]^{d_\nu} \ar[d]^{\Delta(B)}&  \Pac(B)\ar[r]^{G} \ar[d]^{\Delta(B)} &  \Pac(B)\ar[d]^{\Delta(B)} \\
\Pac \circ\Pac(B)   \ar[r]^{\Pac \circ G^{-1}}   \ar@/_5ex/[rrr]^{\Pac \circ' d_\xi} & \Pac \circ \Pac(B)   \ar[r]^{\Pac \circ' d_\nu}&  \Pac \circ\Pac(B) 
 \ar[r]^{\Pac \circ G} & \Pac \circ \Pac(B)} $$
So it defines another $\PP_\infty$-algebra structure $C:=(B, d_B, \xi)$ on the underlying chain complex $B$, such that the map $g : B \rightsquigarrow C$ becomes an $\infty$-isomorphism. This concludes the proof. 

\item The second point is shown by the same kind of arguments, where one has to use a splitting of the epimorphism 
$g_{(0)} : B \epi C$ this time. 

\end{enumerate}
\end{proof}

\subsection{Proof of Theorem~\ref{theo:MCcoalgPinfty}}

\begin{proof}(of Theorem~\ref{theo:MCcoalgPinfty}--$(1)$)

\begin{itemize}
\item[$(\text{MC}\, 1')$] [\emph{Finite products and pullbacks of fibrations}]  
This is a direct corollary of the fibrant objects category structure \cite{Brown73}. Let us just make the product construction explicit. 
Let $(A, d_A, \mu)$ and $(B, d_B, \nu )$ be two $\PP_\infty$-algebras. Their product is given by $A\oplus B$ with $\PP_\infty$-algebra structure:
$$\PP^{\ac} \xrightarrow{\mu + \nu} {\End}_A \oplus {\End}_B \to {\End}_{A \oplus B}\ . 
$$
The structures maps are the classical projections $A\oplus B \epi A$ and $A\oplus B \epi B$. Any pair 
$C \stackrel{f}{\rightsquigarrow} A$ and $C \stackrel{f}{\rightsquigarrow} B$ of $\infty$-morphisms extend to the following unique $\infty$-morphism: 
$$\PP^{\ac} \xrightarrow{f+g} {\End}^C_A \oplus {\End}^C_B \to {\End}^C_{A \oplus B}\ . 
$$

\item[$(\text{MC}\, 2)\ $] [\emph{Two out of three}] Straightforward.

\item[$(\text{MC}\, 3)\ $] [\emph{Retracts}] Straightforward.

\item[$(\text{MC}\, 4)\ $] [\emph{Lifting property}]  
We consider the following commutative diagram in the category $\infty$\textsf{-}$\PP_\infty$\textsf{-alg} 
$$\xymatrix@M=6pt{A  \ar@{~>}[r]^h  \ar@{>~>}[d]^f &    C\ \ar@{~>>}[d]^g \\
 B \ar@{~>}[r]^k   &   D\ , } $$
where $f$ is a cofibration and where $g$ is a fibration. Using Proposition~\ref{prop:strification}, we can equivalently suppose that the morphisms $f$ and $g$ are strict. Let us prove by induction on the weight $n$ the existence of  a lifting $l : B \rightsquigarrow C$ of the diagram 
\begin{eqnarray}\label{eqn:Diagram}
\xymatrix@M=6pt{A  \ar@{~>}[r]^h  \ar@{>->}[d]^f &    C\ \ar@{->>}[d]^g \\
 B \ar@{~>}[r]^k  \ar@{..>}[ur]^{l}  &   D\ , } 
 \end{eqnarray}
when either $f$ or $g$ is  a quasi-isomorphism. The lifting property $(\text{MC}\, 4)$ of the model category structure on unbounded chain complex \cite{Hovey99} provides us with a chain map $l_{(0)} : (B,d_B) \to (C, d_C)$ such that the following diagram commutes 
$$\xymatrix@M=6pt{(A,d_A)  \ar[r]^{h_{(0)}}  \ar@{>->}[d]^f &    (C,d_C)\ \ar@{->>}[d]^g \\
 (B,d_B) \ar[r]^{k_{(0)}}  \ar@{..>}[ur]^{l_{(0)}}  &   (D,d_D)\ . } $$
Suppose constructed the components $l_{(0)}, l_{(1)}, \ldots, l_{(n-1)}$ of the map $l$ such that Diagram~(\ref{eqn:Diagram}) commutes up to weight $n-1$.  Let us look for  a map $l_{(n)} \in 
\Hom_{\Sy}({\Pac}^{(n)}, \End^B_C)$ 
such that the diagram~(\ref{eqn:Diagram}) commutes up in weight $n$ and such that Equation~(\ref{eqn:inftyMorphEquiv})  is satisfied in weight $n$, i.e.: 

\begin{eqnarray*}
&\text{(a)}& \ f^* l_{(n)}=h_{(n)}\ , \\
&\text{(b)}& \ g_* l_{(n)}=k_{(n)}\ , \\
&\text{(c)}& \ \partial^A_B\, l_{(n)} =  \sum_{k=1}^n l_{(n-k)} \ast  \mu_{(k)} - \sum_{k=1}^n \nu_{(k)} \circledast{l}_{(n-k)}+  l_{(n-1)} d_\varphi  \ ,
\end{eqnarray*}
where $\mu$ and $\nu$ stand respectively for the $\PP_\infty$-algebra structures on $B$ and $C$. We consider a retraction $r : B \epi A$ of $f$, $rf=\id_A$, and a section $s : D \mono C$ of $g$, $gs=\id_D$, in the category of graded modules. The map $\ell$ defined by 
$$\ell:= r^* h_{(n)} +s_*k_{(n)} -(sg)_* r^* h_{(n)} $$
is a solution to Equations~(a) and (b). Let us denote by $\widetilde{l}_{(n)}$ the right-hand side of Equation~(c), as in Appendix~\ref{App:Obstruction}. Since 
$f$ is a morphism  of $\PP_\infty$-algebras, then one can see, by a direct computation from the definition, that the obstruction 
$\widetilde{\big( f^*l \big)}_{(n)}$ to lift $f^*l=l\circledcirc f$ is equal to $f^* \widetilde{l}_{(n)}$. This implies 
$$ f^*\big(\partial^B_C\, \ell -  \widetilde{l}_{(n)}\big)=\partial^A_C \big( f^* \ell   \big)-\widetilde{\big( f^*l \big)}_{(n)}
= \partial^A_C \big( h_{(n)}   \big)-\widetilde{h}_{(n)}=0Ê\ .
$$
In the same way, the relation 
$\widetilde{\big( g_*l \big)}_{(n)}=g_* \widetilde{l}_{(n)}$
 gives 
 $$ g_*\big(\partial^B_C\, \ell -  \widetilde{l}_{(n)}\big)
=\partial^B_D \big( g_* \ell   \big)-\widetilde{\big( g_*l \big)}_{(n)}
=\partial^A_C \big( k_{(n)}   \big)-\widetilde{k}_{(n)}=0Ê\ .
$$
Let $\lambda: {\Pac}^{(n)}(B) \to C$ be the image of $\partial^B_C\, \ell -  \widetilde{l}_{(n)}$ under the isomorphism 
$\Hom_{\Sy}({\Pac}^{(n)}, \End^B_C)\cong \Hom({\Pac}^{(n)}(B), C)$.
Since $\lambda \circ {\Pac}^{(n)}(f)=0$ and since $g \circ \lambda=0$, then the map $\lambda$ factors through a map $\bar \lambda : \coker {\Pac}^{(n)}(f) \to \ker g$, that is 
$\lambda=i \bar{\lambda} p$, where $i$ and $p$ are the respective canonical injection and projection. 
If $f$ is a quasi-isomorphism, then so is the map ${\Pac}^{(n)}(f)$, by the operadic K\"unneth formula \cite[Theorem~$6.2.3$]{LodayVallette12} and hence
the chain complex $\coker {\Pac}^{(n)}(f)$ is acyclic. 
Respectively, 
if $g$ is a quasi-isomorphism, then the chain complex $\ker g$ is acyclic. 
Theorem~\ref{thm:Obstruction} shows that $\lambda$ is a cycle for the differential $(\partial^B_C)_*$, then so is $\bar \lambda$. 
Hence, in either of the two aforementioned cases, the cycle $\bar \lambda$ is boundary element : 
$\bar{\lambda}=\partial(\theta)$. By a slight abuse of notation, we denote by $i\theta p$ the corresponding element in  
$\Hom_{\Sy}({\Pac}^{(n)}, \End^B_C)$. 
Finally, we consider the element 
$$l_{(n)}:= \ell - i \theta  p \ ,$$
which  satisfies Equations~(a), (b) and (c).

\item[$(\text{MC}\, 5)\ $] [\emph{Factorization}] 
 Let $f : A \rightsquigarrow B$ be an $\infty$-morphism of $\PP_\infty$-algebras. 
\begin{itemize}
\item[(a)]

We denote by $D:=\coker( f_{(0)})$ the cokernel of this first component. 
We consider the mapping cone $C:=D \oplus sD$ of the identity $\id_{sD}$ of the suspension of  $D$ as a $\PP_\infty$-algebra with trivial structure.  The product $A\prod C$ of the two $\PP_\infty$-algebras $A$ and $C$ is given by the underlying chain complex $A\oplus C$, see the above proof of the axiom~$(\text{MC}\, 1')$. Let $i : A \to A\prod C$ be the morphism of $\PP_\infty$-algebras defined by $\id_A \oplus 0$ and let $p_{(0)} : A\oplus C \epi B\cong \Im (f_{(0)})\oplus D$ be 
the chain map defined by the sum $f_{(0)}\oplus \mathrm{proj}_D$ of the first component of $f$ and the canonical projection onto $D$. So this provides us with a lifting of the following commutative diagram in the category of chain complexes 
$$\xymatrix@M=6pt@R=30pt@C=30pt{A  \ar[r]^{f_{(0)}}  \ar@{>->}[d]^\sim_{i} &   B\ \ar@{->>}[d] \\
 A\oplus C \ar[r]  \ar@{..>>}[ur]^(0.55){p_{(0)}}  &  0\ . } $$
 Using the same arguments as in the aforementioned proof of Axiom~$(\text{MC}\, 4)$, there exists an $\infty$-morphism $p : A\prod C \rightsquigarrow B$, extending $p_{(0)}$ such that the following diagram commutes 
 $$\xymatrix@M=6pt@R=30pt@C=30pt{A  \ar@{~>}[r]^{f}  \ar@{>->}[d]^\sim_i &   B\ \ar@{->>}[d] \\
 A\prod C \ar[r]  \ar@{~>>}[ur]^(0.52){p}  &  0\ . } $$
So the factorization $f=pi$ concludes the proof. 

\item[(b)] Let $C:=s^{-1}A\oplus A$ be the mapping cone of the identity $\id_A$ of $A$. 
By Corollary~\ref{coro:ExtensionQIinftyMor}, the canonical inclusion $A \mono C$ extends to an $\infty$-monomorphism denoted by 
$j : A \rightsquigarrow C$. We consider the product $i$ of the two $\infty$-morphisms $f$ and $j$: 
$$\xymatrix@M=6pt{ & A \ar@{~>}[ld]_f\ar@{>~>}[dd]_i \ar@{>~>}[rd]^j & \\
B & &   C \\
&  B\prod C\ . \ar@{~>>}[lu]^p_\sim\ar@{~>}[ur]& }$$
Since the underlying chain complex of the product $B\prod C$ is equal to $B\oplus C$, 
the $\infty$-morphism $i$ is an cofibration and the projection $p$ is an acyclic fibration. So the factorization $f=pi$ concludes the proof. 

\end{itemize}

\end{itemize}

\end{proof}

\subsection{Relationship with the model category structure on conilpotent dg $\Pac$-coalgebras}
Since the bar construction $\widetilde{\B}_\iota$ provides us with an isomorphism of categories 
$$\widetilde{\B}_\iota\  : \ \infty\textsf{-}\PP_\infty\textsf{-alg} \cong  \textsf{quasi}\textsf{-}\textsf{free}\  \PP^{\ac}\textsf{-}\textsf{coalg}\ , $$
we can compare  the model category structure without equalizers on $\PP_\infty$-algebras (Theorem~\ref{theo:MCcoalgPinfty}) with the model category structure on conilpotent $\PP^{\ac}$-coalgebras (Theorem~\ref{theo:MCcoalg}). 
The following proposition shows that the various notions of weak equivalences, cofibrations and fibrations agree. 

\begin{prop}\label{prop:InftyqiWE}\leavevmode

\begin{enumerate}

\item An $\infty$-morphism $f$ is a weak-equivalence of $\PP_\infty$-algebras 
if and only if its image under the bar construction $\widetilde{\B}_\iota \, f $ is a weak equivalence of 
conilpotent dg $\Pac$-coalgebras. 

\item An $\infty$-morphism $f$ is a cofibration of $\PP_\infty$-algebras 
if and only if its image under the bar construction $\widetilde{\B}_\iota \, f$ is a cofibration of 
conilpotent dg $\Pac$-coalgebras. 

\item An $\infty$-morphism $f$ is a fibration of $\PP_\infty$-algebras 
if and only if its image under the bar construction $\widetilde{\B}_\iota \, f $ is a fibration of 
conilpotent dg $\Pac$-coalgebras. 

\end{enumerate}
\end{prop}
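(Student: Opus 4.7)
The approach is to exploit the weight filtration on quasi-free $\Pac$-coalgebras, reducing each statement to a property of the first component $f_{(0)}$ of the $\infty$-morphism. The central observation I would establish first is that both the codifferential $d_\mu$ on $\Pac(A)$ encoding a $\PP_\infty$-structure and the coalgebra morphism $\widetilde{\B}_\iota f$ encoding an $\infty$-morphism respect the weight filtration, and their associated graded pieces are semi-classical: concretely, the associated graded of $d_\mu$ reduces to the internal differential $\mathrm{id}_{\Pac^{(n)}}\circ' d_A$, and the associated graded of $\widetilde{\B}_\iota f$ reduces to ${\Pac}^{(n)}(f_{(0)})$. This follows because the cooperad differential $d_\varphi$, every nontrivial structure operation $\mu_{(k)}$ with $k\ge 1$, and every higher $\infty$-morphism component $f_{(k)}$ with $k\ge 1$ strictly lower the weight.

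Given this, parts~$(1)$ and $(2)$ are short applications of the operadic K\"unneth formula (Proposition~$6.2.3$ of \cite{LodayVallette12}) in characteristic zero. For~$(1)$: if $f_{(0)}$ is a quasi-isomorphism, then each $\gr_n \widetilde{\B}_\iota f = {\Pac}^{(n)}(f_{(0)})$ is a quasi-isomorphism, making $\widetilde{\B}_\iota f$ a filtered quasi-isomorphism, hence a weak equivalence by Proposition~\ref{lemm:FiltQIisWE}; conversely, if $\widetilde{\B}_\iota f$ is a weak equivalence, the naturality of the rectification $\infty$-quasi-isomorphism from Theorem~\ref{theo:Rectification}$(2)$ yields a commutative square relating $f$ to $\Omega_\kappa \widetilde{\B}_\iota f$ in which three sides induce isomorphisms on homology via their first components, forcing $f_{(0)}$ to do so as well. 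For~$(2)$: K\"unneth promotes injectivity of $f_{(0)}$ to degreewise injectivity of ${\Pac}^{(n)}(f_{(0)})$, and the exhaustivity of the weight filtration from Proposition~\ref{prop:WeightFiltr} upgrades this to injectivity of $\widetilde{\B}_\iota f$; conversely, since $\widetilde{\B}_\iota f$ restricts to $f_{(0)}$ on $F_0 = A$, injectivity descends immediately.

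Part~$(3)$ is the most delicate step and I expect it to be the main obstacle. For the direct implication, I would first apply Proposition~\ref{prop:strification}$(2)$ to reduce to the case of a strict surjective morphism $h$ of $\PP_\infty$-algebras, since $\widetilde{\B}_\iota$ sends $\infty$-isomorphisms to isomorphisms of $\Pac$-coalgebras. Applying Hinich's theorem to the (Koszul) operad $\PP_\infty=\Omega\Pac$ provides a model category structure on strict $\PP_\infty$-algebras in which $h$ is a fibration, and the bar-cobar adjunction $\Omega_\iota \dashv \widetilde{\B}_\iota$ associated with the universal twisting morphism $\iota: \Pac \to \PP_\infty$ forms a Quillen adjunction. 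This last point requires the direct analog of Theorem~\ref{theo:BarCobarFirCofibr}$(1)$ for $\Omega_\iota$, whose proof is a formal adaptation using the weight filtration --- one should note here that the weak equivalences of $\Pac$-coalgebras detected by $\Omega_\iota$ coincide with those detected by $\Omega_\kappa$ by the Koszul property. The right adjoint $\widetilde{\B}_\iota$ therefore preserves fibrations, so $\widetilde{\B}_\iota h$ is a fibration.

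For the converse of part~$(3)$, assume $\widetilde{\B}_\iota f$ is a fibration of coalgebras. I would use the factorization axiom $(\mathrm{MC}\,5)$ of Theorem~\ref{theo:MCcoalgPinfty}$(1)$ to write $f = p\circ i$ in $\infty\textsf{-}\PP_\infty\textsf{-alg}$ with $i$ an acyclic $\infty$-monomorphism and $p$ an $\infty$-epimorphism. Parts~$(1)$ and~$(2)$ give that $\widetilde{\B}_\iota i$ is an acyclic cofibration of $\Pac$-coalgebras, while the direct implication above gives that $\widetilde{\B}_\iota p$ is a fibration. The assumed fibration $\widetilde{\B}_\iota f$ then admits a retraction of $\widetilde{\B}_\iota i$ by the lifting property in the coalgebra model structure of Theorem~\ref{theo:MCcoalg}, exhibiting $\widetilde{\B}_\iota f$ as a retract of $\widetilde{\B}_\iota p$ in the arrow category. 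Transporting back via the categorical isomorphism $\widetilde{\B}_\iota$ and restricting to weight-zero components shows that $f_{(0)}$ is a retract of the surjection $p_{(0)}$ in chain complexes, and hence surjective.
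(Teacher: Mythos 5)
Your proposal is correct, but it takes a genuinely different route from the paper's at nearly every step, most substantially in part~$(3)$. For part~$(1)$ the paper simply cites Proposition~$11.4.7$ of \cite{LodayVallette12}, and for the converse of part~$(2)$ it invokes the explicit right inverse of an $\infty$-monomorphism produced by the formula of \cite[Theorem~$10.4.1$]{LodayVallette12}; your associated-graded argument reproves both from the weight filtration and is valid (in characteristic zero the graded pieces of $\widetilde{\B}_\iota f$ are indeed ${\Pac}^{(n)}(f_{(0)})$, since $d_\varphi$, the $\mu_{(k)}$ and the $f_{(k)}$ with $k\ge 1$ all strictly lower the weight). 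For part~$(3)$ the paper never leaves the two model structures already established: it characterizes fibrations on both sides by the right lifting property against acyclic cofibrations and shuttles lifting problems through the isomorphism of categories $\widetilde{\B}_\iota$ and the bar-cobar resolution $\upsilon_\kappa$, using the fibrancy of quasi-free coalgebras. You instead route the direct implication through a third model structure (Hinich's on strict $\PP_\infty$-algebras) and a second Quillen adjunction $\Omega_\iota \dashv \B_\iota$, and the converse through a factorization-plus-retract argument. Both directions work, but your direct implication rests on one genuine extra lemma that the paper never needs and that you dispatch as a ``formal adaptation'': that $\Omega_\iota$ and $\Omega_\kappa$ detect the same weak equivalences of conilpotent dg $\Pac$-coalgebras. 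This does hold --- the natural map $\Omega_\iota C \to \Omega_\kappa C$ induced by the operad quasi-isomorphism $\Omega\Pac \to \PP$ is a quasi-isomorphism, by the weight-filtration spectral sequence of Proposition~\ref{lemm:FiltQIisWE} combined with the operadic K\"unneth formula and the Koszulity of $\PP$ --- but it deserves a written proof, since without it the adjunction for $\iota$ is not known to be Quillen. Your retract argument for the converse (factor $f=pi$ by $(\mathrm{MC}\,5)$ of Theorem~\ref{theo:MCcoalgPinfty}, lift against the acyclic cofibration $\widetilde{\B}_\iota i$, and read off surjectivity of $f_{(0)}$ as a retract of $p_{(0)}$) is clean, non-circular, and arguably more transparent than the paper's corresponding lifting-property chase.
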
 

\begin{proof}$ \ $ 

\begin{enumerate}
\item This is Proposition~$11.4.7$ of \cite{LodayVallette12}.

\item Let  $f : A \rightsquigarrow A'$ be an $\infty$-morphism
of $\PP_\infty$-algebras. If  $\widetilde{\B}_\iota \, f : \widetilde{\B}_\iota A \mono \widetilde{\B}_\iota  A'$ is a cofibration of conilpotent dg $\Pac$-coalgebras, then it is a monomorphism by definition. So its restriction to $A$ is again a monomorphism. Since this restriction is equal to the composite 
$$A \xrightarrow{f_{(0)}} A' \mono \Pac(A')\ ,  $$
this implies that the first component $f_{(0)}$ is a monomorphism. 

\noindent 
In the other way round, suppose that the $\infty$-morphism $f : A \rightsquigarrow A'$ is an $\infty$-monomorphism, i.e. $f_{(0)} : A \mono A'$ is injective. Let $r_{(0)} : A' \epi A$ be a retraction of $f_{(0)}$. The formula of \cite[Theorem~$10.4.1$]{LodayVallette12} produces an $\infty$-morphism $r$, which is right inverse to $f$. Therefore, the image $\widetilde{\B}_\iota \, r$ is a right inverse to $\widetilde{\B}_\iota \, f$, which proves that this latter one is a monomorphism. 

\item Let us first recall that axioms $(\text{MC}\, 3)$ and $(\text{MC}\, 5)$ imply that fibrations are characterized by the right lifting property with respect to acyclic cofibrations. So this characterization also holds for $\infty$-epimorphisms in the model category without equalizers of Theorem~\ref{theo:MCcoalgPinfty}. 

\smallskip

\noindent
 Let  $f : A \rightsquigarrow A'$ be an $\infty$-morphism of $\PP_\infty$-algebras. Suppose that   $\widetilde{\B}_\iota \, f : \widetilde{\B}_\iota A \epi \widetilde{\B}_\iota  A'$ is a fibration of conilpotent dg $\Pac$-coalgebras. We consider a commutative diagram in $\infty\textsf{-}\PP_\infty\textsf{-alg}$
\begin{eqnarray}\label{eqn:Diagrambis}
\xymatrix@M=6pt{B  \ar@{~>}[r]^h  \ar@{>~>}[d]^\sim_g &    A\ \ar@{~>>}[d]^f \\
 B' \ar@{~>}[r]^k   &   A'\ , } 
 \end{eqnarray}
where $g$ is an  acyclic cofibration. Its image 
$\widetilde{\B}_\iota \, g : \widetilde{\B}_\iota B \to \widetilde{\B}_\iota  B'$
under the bar construction functor is an acyclic cofibration of conilpotent dg $\Pac$-coalgebras by the two previous points $(1)$ and $(2)$. So, by the axiom $(\text{MC}\, 4)$ of Theorem~\ref{theo:MCcoalg},  there exists a lifting map 
${\widetilde{\B}_\iota \, l}$ in following  diagram
$$\xymatrix@R=30pt@C=30pt@M=5pt{\widetilde{\B}_\iota B \ar[r]  \ar@{>->}[d]_{\widetilde{\B}_\iota \, g}^\sim & \widetilde{\B}_\iota A\  \ar@{->>}[d]^{\widetilde{\B}_\iota \, f} \\
\widetilde{\B}_\iota B' \ar[r] \ar@{..>}[ur]^{{\widetilde{\B}_\iota \, l}}  & \widetilde{\B}_\iota A'\ , }$$
which proves that $l$ is a lifting map in Diagram~\eqref{eqn:Diagrambis}. So the $\infty$-morphism 
$f : A \rightsquigarrow A'$
is a fibration. 

\noindent
In the other way round, suppose that the $\infty$-morphism $f : A \rightsquigarrow A'$ is a fibration of $\PP_\infty$-algebras and let  
$$\xymatrix@R=30pt@C=30pt@M=5pt{C \ar[r]^H  \ar@{>->}[d]_{G}^\sim & \widetilde{\B}_\iota A\  \ar@{->}[d]^{\widetilde{\B}_\iota \, f} \\
C' \ar[r]^K   & \widetilde{\B}_\iota A'\  }$$
be a commutative diagram in the category of conilpotent dg $\Pac$-coalgebras, where $G$ is an acyclic cofibration. 
We consider the diagram 
$$\xymatrix@R=30pt@C=30pt@M=5pt{
C \ar[rr]^H \ar@{>->}[dr]^{\upsilon_\kappa C}_\sim  \ar@{>->}[dd]_{G}^\sim & &  \widetilde{\B}_\iota A\  \ar@{->}[dd]^{\widetilde{\B}_\iota \, f} \\
&\B_\kappa \Omega_\kappa C\  \ar@{>->}[dd]_(0.7)\sim^(0.7){\B_\kappa \Omega_\kappa G}&\\
C' \ar[rr]^(0.56)K | (0.48)\hole \ar[dr] &  & \widetilde{\B}_\iota A'\ \\
&\B_\kappa \Omega_\kappa C' \ .& }$$
Since the unit $\upsilon_\kappa C$ of the bar-cobar adjunction is an acyclic cofibration by Theorem~\ref{Thm:KoszulBarCobarRes}--$(2)$ and since $\widetilde{\B}_\iota A$ is a fibrant dg $\Pac$-coalgebra, then 
there exists a morphism $H' : \B_\kappa \Omega_\kappa C \to \widetilde{\B}_\iota A$ which factors $H$, i.e. $H=H' \upsilon_\kappa C$. Since $G$ is a weak-equivalence, then $\Omega_\kappa G$ is a quasi-isomorphism, by definition, and so 
$ \B_\kappa \Omega_\kappa G$ is a weak-equivalence by point~$(1)$. 
In the same way, since $G$ is a cofibration, then it is a monomorphism, by definition, and so 
is $\Omega_\kappa G=\id_\PP(G) : \PP(C) \mono \PP(C')$; point~$(2)$ shows that $\B_\kappa \Omega_\kappa G$
is  a cofibration. All together, this proves that the map $\B_\kappa \Omega_\kappa G$ is an acyclic cofibration of conilpotent dg $\Pac$-coalgebras, and, since  $\widetilde{\B}_\iota A'$ is fibrant, then 
there exists a morphism $K' : \B_\kappa \Omega_\kappa C' \to \widetilde{\B}_\iota A'$ which factors 
$(\widetilde{\B}_\iota \, f) H'$, i.e. $(\widetilde{\B}_\iota \, f) H' = K' (\B_\kappa \Omega_\kappa G)   $.
Finally, we apply the lifting property axiom $(\text{MC}\, 4)$ of Theorem~\ref{theo:MCcoalgPinfty} in the diagram 
$$\xymatrix@R=30pt@C=30pt@M=5pt{
\Omega_\kappa  C  \ar@{~>}[r]^{h'}  \ar@{>->}[d]_{\Omega_\kappa G}^\sim &  A\  
\ar@{~>>}[d]^{f} \\
\Omega_\kappa  C'  \ar@{~>}[r]^{k'} \ar@{..>}[ur]  &  A' }$$
to conclude that $\widetilde{\B}_\iota \, f$ is a fibration.
\end{enumerate}
\end{proof}

%%%%%%%%%%%%%%%%    APPENDIX %%%%%%%%%%%%%%%%%%%

\appendix 

\section{Obstruction theory for infinity-morphisms}\label{App:Obstruction}
In this appendix, we settle the obstruction theory for $\infty$-morphisms of homotopy $\PP$-algebras. \\

Recall from Theorem~\ref{theo:EquivInftyMorph}, that an $\infty$-morphism $f : A \rightsquigarrow B$ between two  $\PP_\infty$-algebras $(A, d_A, \mu)$ and $(B, d_B, \nu )$ is a map 
$f : {\PP}^{\ac} \to \End^A_B$
satisfying Equation~\eqref{eqn:inftyMorphEquiv}
\begin{eqnarray*}
\partial(f) =f * \mu - \nu \circledast f 
\end{eqnarray*}
 in $\Hom_\Sy(\PP^{\ac},\End_B^A)$.

For any $n\ge 0$, we denote by 
$\mu_{(n)}$, $\nu_{(n)}$, and $f_{(n)}$ the respective restrictions of the maps $\mu$, $\nu$, and $f$ to 
the weight $n$ part ${\PP^{\ac}}^{(n)}$ of the cooperad $\PP^{\ac}$. Using these notations, Equation~\eqref{eqn:inftyMorphEquiv} becomes 
\begin{eqnarray}\label{eqn:InftyMor(n)}
\partial^A_B f_{(n)} - f_{(n-1)} d_\varphi =  \sum_{k=1}^n f_{(n-k)} \ast  \mu_{(k)} - \sum_{k=1}^n \nu_{(k)} \circledast{f}_{(n-k)} 
\end{eqnarray}
on ${\PP^{\ac}}^{(n)}$, for any $n\ge 0$, where  $\nu_{(k)} \circledast{f}_{(n-k)} $ means, by a slight abuse of notations, that  the total weight of the various maps $f$ involved on the right-hand side is equal to $n-k$. 

\begin{theo}\label{thm:Obstruction}
Let $(A, d_A, \mu)$ and $(B, d_B, \nu )$ be two $\PP_\infty$-algebras. 
Let $n\ge 0$ and suppose given $f_{(0)}, \allowbreak \ldots,\allowbreak  f_{(n-1)}$ satisfying Equation~\eqref{eqn:InftyMor(n)} up to $n-1$. 
The element 
$$  \widetilde{f}_{(n)}:= \sum_{k=1}^n f_{(n-k)} \ast  \mu_{(k)} - \sum_{k=1}^n \nu_{(k)} \circledast {f}_{(n-k)} + f_{(n-1)} d_\varphi$$ 
is a cycle in the chain complex $\big(\Hom_\Sy(\PP^{\ac},\End_B^A), (\partial^A_B)_* \big)$. 
Therefore, there exists an element $f_{(n)}$ satisfying Equation~\eqref{eqn:inftyMorphEquiv} at weight $n$ if and only if 
the cycle $\widetilde{f}_{(n)}$ is a boundary element. 
\end{theo}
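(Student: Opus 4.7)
The second assertion of the theorem is immediate once the first is established: Equation~\eqref{eqn:InftyMor(n)} at weight $n$ reads precisely $\partial^A_B f_{(n)} = \widetilde{f}_{(n)}$, so the existence of an extension $f_{(n)}$ is equivalent to the cocycle $\widetilde{f}_{(n)}$ being a boundary in the chain complex $\bigl(\Hom_\Sy(\PP^{\ac}, \End_B^A), (\partial^A_B)_*\bigr)$. The entire content of the theorem is therefore the assertion that $\widetilde{f}_{(n)}$ is a cycle.

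To prove that, my plan is to apply $\partial^A_B$ to the defining expression of $\widetilde{f}_{(n)}$ term by term and see everything collapse by combining three inputs: the inductive hypothesis, the Maurer--Cartan equations encoding the $\PP_\infty$-algebra structures on $A$ and $B$, and the operadic coherence between the two operations $\ast$ and $\circledast$. More precisely, writing $R(f) := f\ast\mu - \nu \circledast f + f\, d_\varphi$ for the right-hand side of Equation~\eqref{eqn:inftyMorphEquiv}, the inductive hypothesis states $\partial^A_B f_{(k)} = R(f)_{(k)}$ for all $k \leq n-1$, while by definition $\widetilde{f}_{(n)} = R(f)_{(n)}$, so the task reduces to verifying $\partial^A_B R(f)_{(n)} = 0$.

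Expanding $\partial^A_B R(f)$ using its derivation property with respect to both $\ast$ and $\circledast$ yields terms of three shapes: those involving $\partial^A_B f_{(k)}$ for $k < n$, which one rewrites via the inductive hypothesis; those involving $\partial\mu_{(k)}$ and $\partial \nu_{(k)}$, which one rewrites via the Maurer--Cartan equations governing the $\PP_\infty$-structures; and terms arising from the inhomogeneous piece $d_\varphi$ of the cooperad differential. The expected cancellations rest on three ingredients: the pre-Lie identity for the operation on $\End_A$ induced by the partial decomposition $\Delta_{(1)}$, the coassociativity of the full decomposition $\DD$ (which makes $\circledast$ associative), and the compatibility between $\Delta_{(1)}$ and $\DD$ built into the cooperad axioms, which makes the mixed $\ast$/$\circledast$ terms cancel pairwise. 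The remaining contributions involving $d_\varphi$ pair up with the inhomogeneous terms produced by the Maurer--Cartan equations for $\mu$ and $\nu$.

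The main obstacle is the bookkeeping: tracking signs carefully and matching the mixed $\ast$/$\circledast$ terms coming from distributing $\partial^A_B$ across the two different products so that they cancel against one another through cooperadic coherence. A conceptually cleaner route, which I would actually adopt to write out the full proof, is to translate everything back to coalgebras via the isomorphism $\Hom_\Sy(\PP^{\ac}, \End_B^A) \cong \Hom(\PP^{\ac}(A), B)$. In that picture, the data $(f_{(0)},\ldots, f_{(n-1)})$ promotes to a graded map $\widetilde{F}: \PP^{\ac}(A) \to \PP^{\ac}(B)$ of cofree $\PP^{\ac}$-coalgebras defined modulo weight $n$, Equation~\eqref{eqn:inftyMorphEquiv} becomes the chain-map condition $d_\nu \widetilde{F} = \widetilde{F} d_\mu$, and the cocycle property of the obstruction $\widetilde{f}_{(n)}$ drops out from restricting the identities $d_\mu^2 = 0$, $d_\nu^2 = 0$ and the coderivation axioms to the weight-$n$ component of $\PP^{\ac}(A)$.
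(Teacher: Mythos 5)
Your proposal is correct, and your primary plan (apply $\pAB$ to $\widetilde{f}_{(n)}$, substitute the inductive hypothesis and the Maurer--Cartan equations for $\mu$ and $\nu$, and cancel via the pre-Lie identity, the coassociativity of the decomposition map, and the coderivation property of $d_\varphi$) is word for word the computation the paper carries out. The route you say you would actually write up --- passing to the coalgebra level --- is genuinely different in presentation and also works: extend the truncated data by $f_{(n)}=f_{(n+1)}=\cdots=0$ to an honest morphism $F:\PP^{\ac}(A)\to\PP^{\ac}(B)$ of cofree $\PP^{\ac}$-coalgebras, observe that $G:=d_\nu F-Fd_\mu$ is an $F$-coderivation, hence determined by its corestriction onto $B$, which by the inductive hypothesis vanishes in weights $<n$ and equals $-\widetilde{f}_{(n)}$ in weight $n$; the identity $d_\nu G+Gd_\mu=d_\nu^2F-Fd_\mu^2=0$, corestricted to $B$ in weight $n$, then reduces to $(\pAB)_*\widetilde{f}_{(n)}=0$ precisely because the vanishing of $G$ in lower weights kills every term of $d_\nu G+Gd_\mu$ except the one involving the internal differentials $d_A$ and $d_B$. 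The two arguments buy different things: the paper's direct computation is self-contained and makes the role of each structural identity explicit (which is useful since those same manipulations reappear in the proof of the lifting axiom in Section~4), whereas your coalgebra-level argument absorbs the pre-Lie, coassociativity, and coderivation identities into the single statement that $d_\mu$, $d_\nu$ are square-zero coderivations and $F$ is a coalgebra morphism, at the cost of having to justify carefully the $F$-coderivation property of $G$ and the weight bookkeeping in the corestriction. Either write-up is acceptable; just make sure, if you take the coalgebra route, to spell out why only the $d_A$- and $d_B$-components survive in weight $n$, since that is exactly where the hypothesis ``Equation~\eqref{eqn:InftyMor(n)} holds up to $n-1$'' is used.
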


\begin{proof}
Let us prove that $\partial^A_B \widetilde{f}_{(n)}=0$; the second statement is then straightforward. We have $\partial^A_B \widetilde{f}_{(n)}=$
\begin{eqnarray*}
&&\sum_{k=1}^n \big( (\pAB f_{(n-k)}) \ast  \mu_{(k)} + f_{(n-k)} \ast  (\partial_A \mu_{(k)})
-  (\partial_B \nu_{(k)}) \circledast{f}_{(n-k)} 
+  \nu_{(k)} \circledast {(f; \pAB f)}_{(n-k)} \big)\\
&&+ \pAB f_{(n-1)} d_\varphi \ , 
\end{eqnarray*}
where the notation $(f ; \pAB f)$ means that we apply the definition of the product $\circledast$ with many $f$ but one $\pAB f$. (With the notations of \cite{LodayVallette12}, this coincides to composing by $\nu\circ (f; \pAB f)$.)  
Applying Equation~\eqref{eqn:InftyMor(n)} at weight strictly less than $n$ and the Maurer--Cartan equation 
$$\partial_A \mu_{(n)} = -\sum_{k=1}^n \mu_{(k)}\star \mu_{(n-k)} - \mu_{(n-1)}d_\varphi $$
for $\mu$ and $\nu$ respectively, we get 
\begin{eqnarray*}
\partial^A_B \widetilde{f}_{(n)}&=&
\sum_{k+l+m=n} \big( 
(f_{(k)} \ast \mu_{(l)})\ast \mu_{(m)} 
- f_{(k)} \ast (\mu_{(l)}\star \mu_{(m)})
+ (\nu_{(k)}\star \nu_{(l)})\circledast  {f}_{(m)}\\
&&\qquad\qquad
- \nu_{(k)}\circledast  {(f; \nu\circledast f)}_{(l+m)}
+ \nu_{(k)}\circledast  {(f; f\ast \mu)}_{(l+m)} 
- (\nu_{(k)}\circledast  {f}_{(l)})\ast \mu_{(m)}
\big) \\
&&+\sum_{k=1}^{n-1} \big(
(f_{(n-k-1)} d_\varphi)\ast \mu_{(k)}
- f_{(n-k-1)}\ast (\mu_{(k)} d_\varphi)
+ (f_{(n-k-1)} \ast \mu_{(k)})d_\varphi \\
&&\qquad\quad
+ (\nu_{(k)}d_\varphi) \circledast  {f}_{n-k-1}) 
- (\nu_{(k)} \circledast  {f}_{n-k-1}) d_\varphi
+ \nu_{(k)} \circledast  {(f; f_{(\bullet-1)}d_\varphi)}_{(n-k-1)}
\big)\\
&&
+ f_{(n-2)} (d_\varphi)^2 \ .
\end{eqnarray*}
Since $\mu$ has degree $-1$, the preLie relations of the operations $\star$ and $\ast$ imply 
$(f\ast\mu)\ast \mu = f\ast (\mu \star \mu)$.
The coassociativity of the decomposition coproduct $\Delta$ of the cooperad $\Pac$ implies 
$(\nu \star \nu)\circledast f = \nu (f; \nu \circledast f)$ and 
$(\nu \circledast f)\ast \mu  = \nu \circledast (f;f \circledast \mu)$.
Since $d_\varphi$ is a coderivation of the cooperad $\Pac$, it implies 
$(f\ast \mu)d_\varphi = f\ast (\mu d_\varphi)-(f d_\varphi)\ast \mu$ and 
$(\nu\circledast f)d_\varphi = (\nu d_\varphi)\circledast f+ \nu \circledast (f; f_{(\bullet-1)}d_\varphi)$. 
Finally, the coderivation $d_\varphi$ squares to zero, which concludes the proof. 
\end{proof}

\begin{coro}\label{coro:ExtensionQIinftyMor}
Let $(A, d_A, \mu)$ be a $\PP_\infty$-algebra and let $(B, d_B)$ be an acyclic chain complex, viewed as a trivial $\PP_\infty$-algebra. 
Any chain map $(A, d_A)\to (B,d_B)$ extends to an $\infty$-morphism 
$(A, d_A, \mu)\rightsquigarrow (B,d_B,0)$.
\end{coro}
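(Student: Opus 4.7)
The plan is to build the components $f_{(n)}$ of the $\infty$-morphism inductively in the weight $n$, applying the obstruction theory of Theorem~\ref{thm:Obstruction} at each step. We set $f_{(0)}$ to be the given chain map $(A,d_A)\to (B,d_B)$; this is precisely the content of Equation~\eqref{eqn:InftyMor(n)} in weight $0$, since $\mu_{(0)}=\nu_{(0)}=0$ and $d_\varphi$ vanishes on ${\PP^{\ac}}^{(0)}=\I$.

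Assume $f_{(0)},\dots,f_{(n-1)}$ have been constructed satisfying \eqref{eqn:InftyMor(n)} up to weight $n-1$. Since the target $\PP_\infty$-algebra is trivial, $\nu=0$, so all terms $\nu_{(k)}\circledast f_{(n-k)}$ in the obstruction vanish and
\[
\widetilde{f}_{(n)} \ =\ \sum_{k=1}^n f_{(n-k)}\ast\mu_{(k)} + f_{(n-1)} d_\varphi\ .
\]
By Theorem~\ref{thm:Obstruction}, $\widetilde{f}_{(n)}$ is a cycle in $\bigl(\Hom_\Sy(\PP^{\ac},\End^A_B),(\partial^A_B)_*\bigr)$, and the existence of $f_{(n)}$ solving \eqref{eqn:InftyMor(n)} is equivalent to $\widetilde{f}_{(n)}$ being a boundary.

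The one point requiring attention is therefore the acyclicity of the relevant $\Hom$-complex. Since we work over a field, the acyclic chain complex $B$ is contractible: there exists a degree $+1$ map $h:B\to B$ with $d_B h + h d_B = \id_B$. Post-composition with $h$ then provides a contracting homotopy on $\Hom(A^{\otimes n},B)$ for every $n$, showing that this chain complex is acyclic. Taking $\Sy_n$-invariants (equal to coinvariants in characteristic $0$) preserves acyclicity, so $\Hom_{\Sy_n}\bigl({\PP^{\ac}}^{(n)}(n),\Hom(A^{\otimes n},B)\bigr)$ is acyclic as well. Hence every cycle $\widetilde{f}_{(n)}$ is a boundary, a lift $f_{(n)}$ exists, and the induction proceeds. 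The main (mild) obstacle is just this acyclicity statement; once it is in hand, Theorem~\ref{thm:Obstruction} does all the work and the inductive construction yields the desired $\infty$-morphism extending $f_{(0)}$.
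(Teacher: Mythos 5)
Your proof is correct and follows essentially the same route as the paper: induction on the weight via Theorem~\ref{thm:Obstruction}, with all obstructions vanishing because the acyclicity of $(B,d_B)$ forces the acyclicity of $\big(\Hom_\Sy(\PP^{\ac},\End_B^A),(\partial^A_B)_*\big)$. The only difference is that you spell out the contracting-homotopy argument for this last point, which the paper leaves implicit.
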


\begin{proof}
We prove the existence of a series of maps $f_{(n)}$, for $n\ge 0$ satisfying Equation~\eqref{eqn:InftyMor(n)} by induction on $n$ using Theorem~\ref{thm:Obstruction}. 
Let us denote the  map $A \to B$ by $f_{(0)}$. Since this is a chain map, it satisfies Equation~\eqref{eqn:InftyMor(n)} for $n=0$. 
Since the chain complex $(B, d_B)$ is acyclic, then so is the chain complex $\big(\Hom_\Sy(\PP^{\ac},\End_B^A), (\partial^A_B)_* \big)$. Therefore, all the obstructions vanish and Theorem~\ref{thm:Obstruction} applies.
\end{proof}

\section{A technical lemma}\label{app:TechLemma}

\begin{lemm}\label{lemm:AcyclicCofibration}
Let $A$ be a dg $\Po$-algebra and let $D$ be a conilpotent dg $\Pac$-coalgebra. Let $p : A \epi \Omega_\kappa D$ be a fibration of dg $\Po$-algebras. The morphism $j : \B_\kappa A \times_{\B_\kappa \Omega_\kappa D} D \stackrel{\sim}{\mono} \B_\kappa A$, produced by the pullback diagram 
$$\xymatrix@M=5pt@R=30pt@C=30pt{\B_\kappa A \times_{\B_\kappa \Omega_\kappa D} D \ar[r] \ar@{>->}[d]_j^\sim  
\ar@{}[rd] | (0.35)\pullback  & D  \ar[d]^{\upsilon_\kappa D}   \\
\B_\kappa A  \ar[r]^{\B_\kappa p} &  \B_\kappa \Omega_\kappa D \ ,} $$ 
is an acyclic cofibration of conilpotent dg $\Pac$-coalgebras.
\end{lemm}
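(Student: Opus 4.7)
The plan is to verify separately that $j$ is a cofibration (degreewise monomorphism) and that $j$ is a weak equivalence. For the cofibration part, it suffices to show that $\upsilon_\kappa D$ is itself a monomorphism, since monomorphisms are pullback-stable in any category. This holds because $\upsilon_\kappa D : D \to \Bk \Ok D = \Pac(\PP(D))$ is the composite of $\Delta_D : D \to \Pac(D)$ with the inclusion $\Pac(D) \hookrightarrow \Pac(\PP(D))$, and the canonical projection of $\Pac(\PP(D))$ onto its arity-one weight-zero component $\I \circ \I(D) = D$ provides a retraction of graded modules.

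For the weak equivalence part, I would adapt the filtration argument from the proof of Theorem~\ref{Thm:KoszulBarCobarRes}--(2). Equip $\Bk \Ok D$ with the filtration
\begin{align*}
\mathscr{F}_n \Bk \Ok D := \sum_{k \ge 1,\; n_1+\cdots+n_k \le n} (\Pac \circ \PP)(k) \otimes_{\Sy_k} (F_{n_1} D \otimes \cdots \otimes F_{n_k} D)
\end{align*}
used there, and pull it back along $\Bk p$ and $\upsilon_\kappa D$. Setting $F_\bullet A := p^{-1}(F_\bullet \Ok D)$ (where $F_\bullet \Ok D$ is the natural filtration on $\PP(D)$ induced by the weight filtration on $D$) defines a compatible $\PP$-algebra filtration on $A$, and one checks that this gives $\mathscr{F}_n \Bk A = \sum \Pac(k) \otimes_{\Sy_k} (F_{n_1} A \otimes \cdots \otimes F_{n_k} A)$; on $D$ the pulled-back filtration is simply the weight filtration. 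These assemble into a filtration $\mathscr{F}_\bullet P$ on the pullback, making the whole pullback square a square of filtered morphisms.

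Since the fibration $p$ is a degreewise epimorphism, so is $\Bk p$, so the sequence $0 \to P \to \Bk A \oplus D \xrightarrow{\Bk p - \upsilon_\kappa D} \Bk \Ok D \to 0$ is short exact in filtered dg modules. The $3 \times 3$-lemma yields the corresponding short exact sequence on associated graded, exhibiting $\gr_n P$ as the chain-complex pullback of $\gr_n \Bk A$ along $\gr_n \Bk p$ and $\gr_n D$ along $\gr_n \upsilon_\kappa D$. By the Koszul acyclicity argument from the proof of Theorem~\ref{Thm:KoszulBarCobarRes}--(2), $\gr_n \upsilon_\kappa D$ is a quasi-isomorphism; combined with the surjectivity of $\gr_n \Bk p$, the long exact sequence in homology of the graded SES forces $\gr_n j$ to be a quasi-isomorphism for every $n \ge 0$. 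A final spectral sequence argument, parallel to that in the proof of Proposition~\ref{lemm:FiltQIisWE} but using the filtrations on $\Ok P$ and $\Ok \Bk A$ induced by $\mathscr{F}_\bullet P$ and $\mathscr{F}_\bullet \Bk A$, then yields that $\Ok j$ is a quasi-isomorphism, so $j$ is a weak equivalence. The main obstacle will be the bookkeeping: verifying that each induced filtration is preserved by all the relevant structure maps (notably the Koszul twisting differentials $d_2$ in the bar and cobar constructions), and that the resulting spectral sequence is bounded below and hence convergent; once this is set up, the argument is a formal variant of the one for Theorem~\ref{Thm:KoszulBarCobarRes}--(2).
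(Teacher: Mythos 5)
There is a genuine gap, and it sits at the very foundation of your argument: you compute the pullback $\B_\kappa A \times_{\B_\kappa \Omega_\kappa D} D$ on underlying chain complexes, via the short exact sequence $0 \to P \to \Bk A \oplus D \to \Bk \Ok D \to 0$. But limits of conilpotent dg $\Pac$-coalgebras are \emph{not} created by the forgetful functor to chain complexes (that functor is a left adjoint to the cofree coalgebra functor, so it preserves colimits, not limits). Concretely, the chain-level pullback here is $(\Bk p)^{-1}(\mathrm{im}\, \upsilon_\kappa D) \subset \Pac(K\oplus\PP(D))$ with $K=\Ker(p)$, and one checks it is not even stable under the coproduct of $\Bk A$; the actual categorical pullback is a different, larger object. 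This also undercuts the cofibration step: pullback-stability of monomorphisms gives you a categorical mono, but a cofibration is a degreewise injection of the underlying complexes, and you cannot identify that underlying complex without first computing the limit correctly. Everything downstream (the $3\times 3$ lemma, the graded long exact sequence, the identification of $\gr_n P$) is therefore applied to the wrong object.

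The missing idea, which is the heart of the paper's proof, is to get an explicit model of the pullback before filtering anything: since $\Ok D = \PP(D)$ is free, the surjection $p$ splits in graded $\PP$-algebras, giving $A \cong K \oplus \PP(D)$, and this is precisely the categorical \emph{product} $K \times \PP(D)$ of graded $\PP$-algebras. Because $\Bk$ is a right adjoint it preserves products, so $\Bk A \cong \Bk K \times \Bk\Ok D$ and the pullback is identified with $\Bk K \times D$ (a sub-$\Pac$-coalgebra of $\Pac(K\oplus D)$, not of $\Bk K\oplus D$), with $j = \id \times \upsilon_\kappa D$ up to a perturbation of the differential by $d' : \PP(D)\to K$. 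Only after this identification does a filtration/spectral-sequence argument in the spirit of Theorem~\ref{Thm:KoszulBarCobarRes} apply, and even then one needs two nested filtrations to kill the extra differential $d'$. A secondary concern with your filtrations: the preimage filtration $F_\bullet A := p^{-1}(F_\bullet \Ok D)$ does not obviously yield $\mathscr{F}_n \Bk A = \sum \Pac(k)\otimes(F_{n_1}A\otimes\cdots\otimes F_{n_k}A)$ as the preimage of $\mathscr{F}_n\Bk\Ok D$ (preimages do not commute with tensor products), and the compatibility of such an ad hoc filtration with the bar/cobar twisting differentials is exactly the point that fails to be automatic; but these issues are moot until the pullback itself is correctly described.
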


\begin{proof}
We consider the kernel $K:=\Ker(p)$ of the map $p : A \epi \Omega_\kappa D$, which is a sub-dg $\PP$-algebra of $A$. The short exact sequence 
$$\xymatrix@C=20pt{0 \ar[r] & K \ar@{>->}[r] & A \ar@{->>}[r] &  \ar@/_1pc/@{..>}[l] \Ok D =\PP(D) \ar[r] & 0} $$
of dg $\PP$-algebras splits in the category of graded $\PP$-algebras since the underlying $\PP$-algebra of $\Ok D=\PP(D)$ is free. The induced isomorphism of graded $\PP$-algebras $A \cong K \oplus \PP(D)$ becomes an isomorphism of dg $\PP$-algebras when the right-hand side is equipped with the transferred differential, which the sum of the following three terms 
$$d_K \ : \ K \to K, \quad d_{\Ok D}\ : \ \PP(D) \to  \PP(D), \ \text{and} \quad  d'\ : \ \PP(D) \to K\ . $$
Notice that $K \oplus \PP(D)$ endowed with the $\PP$-algebra structure given by 
$$\PP( K \oplus \PP(D)) \epi \PP( K ) \oplus \PP(  \PP(D)) \xrightarrow{\gamma_K\oplus \gamma_{\PP(D)}} K \oplus \PP(D)$$
 is the product of $K$ and $\PP(D)$ in the category of graded $\PP$-algebras. 
Since the bar construction is right adjoint, it preserves the limits and thus the products. This induces the following two isomorphisms of conilpotent dg  $\Pac$-coalgebras 
$$\Bk A  \cong  \Bk K \times \Bk \Ok D, \quad \text{and} \quad  \Bk A \times_{\Bk \Ok D} D  \cong  \Bk K \times D \ ,  $$
with both right-hand sides equipped with an additional differential coming from $d'$.
Under these identifications, the initial pullback becomes 
$$\xymatrix@M=5pt@R=30pt@C=30pt{ \Bk K \times D  \ar[r]^(0.58){\text{proj}} \ar[d]_{\id \times \upsilon_\kappa D}  
\ar@{}[rd] | (0.35)\pullback  & D  \ar[d]^{\upsilon_\kappa D}   \\
 \Bk K \times \Bk \Ok D  \ar[r]^(0.58){\text{proj}} &  \B_\kappa \Omega_\kappa D \ .} $$ 
 Since the unit of adjunction $\upsilon_\kappa D$ is monomorphism, then so is the map $j$, which is therefore a cofibration. \\

It remains to prove that $\id \times \upsilon_\kappa D$ is again a weak-equivalence when the twisted differentials, coming from $d'$, are taken into account. We will prove that this is a filtered quasi-isomorphism with the same kind of filtrations as in the proof of Theorem~\ref{Thm:KoszulBarCobarRes}. 

We notice first that the product of two $\Pac$-coalgebras is given by an equalizer dual to the coequalizer introduced at the beginning of Section~\ref{subsec:Fib-Cofib} to describe coproducts of $\PP$-algebras. So the product $\Pac(K)\times D$ is a conilpotent sub-$\Pac$-coalgebra of the conilpotent cofree $\Pac$-coalgebra $\Pac(K \oplus D)$. We filter the latter one by 
$$\F_n \, \big( \Pac(K \oplus D)\big) := \sum_{k\ge 1,  \atop n_1+\cdots+n_k \leq n} \Pac(k)\t_{\Sy_k} \big((K\oplus F_{n_1} D)\t  \cdots \t (K\oplus  F_{n_k} D)\big) $$
and we denote by $\F_n \, (\Bk K \times D)$ the induced filtration 
on the product $\Bk K \times D$. In the same way, we filter the product $\Bk K \times \Bk \Ok D$, whose underlying conilpotent $\Pac$-coalgebra is 
$\Pac(K\oplus \PP(D))$,
 by 
$$\F_n \,( \Bk K \times \Bk \Ok D) := \sum_{k\ge 1,  \atop n_1+\cdots+n_k \leq n} \Pac(k)\t_{\Sy_k} \big((K\oplus F_{n_1} \PP(D))\t  \cdots \t (K\oplus  F_{n_k} \PP(D))\big),  $$
where 
$$F_{n} \PP(D) :=\sum_{k\ge 1,  \atop n_1+\cdots+n_k \leq n} \PP(k)\t_{\Sy_k} (F_{n_1} D\t  \cdots \t   F_{n_k} D)\ .$$
The respective differentials and the map $\id \times \upsilon_\kappa D$ preserve these two filtrations. Let us now prove that the associated map 
$\gr (\id \times \upsilon_\kappa D)=\id \times \gr (\upsilon_\kappa D)$ is a quasi-isomorphism with the twisted differentials coming for $d'$.
We now consider the filtration $\Fil_n \, (\Bk K \times \gr D)$ induced by 
$$\Fil_n \, \big( \Pac(K \oplus \gr D)\big) := \sum_{k\ge 1,  \atop n_1+\cdots+n_k+k \leq n} \Pac(k)\t_{\Sy_k} 
\big((K\oplus \gr_{n_1} D)\t  \cdots \t (K\oplus  \gr_{n_k} D)\big) \ .$$
We introduce the filtration 
$\Fil_n  (\Bk K \times \Bk \Ok\, \gr D)$ given by 
$$\Fil_n \,\big( \Pac(K\oplus \PP(\gr D))\big) := \sum_{k\ge 1,  \atop n_1+\cdots+n_k \leq n} \Pac(k)\t_{\Sy_k} \big((K\oplus \Fil_{n_1} \PP(\gr D))\t  \cdots \t (K\oplus  \Fil_{n_k} \PP(\gr D))\big),  $$
where 
$$\Fil_{n} \PP(\gr D) :=\sum_{k\ge 1,  \atop n_1+\cdots+n_k +k \leq n} \PP(k)\t_{\Sy_k} (\gr_{n_1} D\t  \cdots \t   \gr_{n_k} D)\ .$$
The respective differentials and the map $\id \times \gr (\upsilon_\kappa D)$ preserve these two  filtrations. By their definitions, the part of the differential coming from $d'$, and only this part, is killed on the first page of the respective spectral sequences. By the same arguments as in the proof of Theorem~\ref{Thm:KoszulBarCobarRes}, we get a quasi-isomorphism between these first pages. Since the two aforementioned filtrations are bounded below and exhaustive, we conclude by the classical  convergence theorem of spectral sequences \cite[Chapter~$11$]{MacLane95}.
\end{proof}

\begin{center}
\textsc{Acknowledgements}
\end{center}
It is a pleasure to thank Joana Cirici, Volodya Dotsenko, Gabriel C. Drummond-Cole, Joey Hirsh,
Bernhard Keller, Brice Le Grignou,
 Emily Riehl, and David White for useful discussions. I would like to express my appreciation to the Simons Center for Geometry and Physics in Stony Brook for  the invitation and the excellent working conditions.  
%%%%%%%%%%%%%%%%%%%%%%%%%%%%%%%%%%%%%%%%%%%%%%%%%%%%%%%

\bibliographystyle{amsalpha}
\bibliography{bib}

\end{document}